\newtheorem{theorem}{Theorem}[section]
\newtheorem{lemma}[theorem]{Lemma}
\newtheorem{prop}[theorem]{Proposition}
\newtheorem{defn}[theorem]{Definition}
\newtheorem{example}[theorem]{Example}
\newtheorem{remark}[theorem]{Remark}
\newcommand{\R}{\mathbb{R}}
\newcommand{\K}{\mathcal{K}}
\DeclareMathOperator{\supp}{supp}
\newcommand{\Val}{\mathrm{Val}}
\newcommand{\ol}[1]{\overline{#1}}
\renewcommand{\epsilon}{\varepsilon}
\begin{document}
\begin{center}

{\large\bf On valuation deviations and their geometry} \\
%\vspace*{+.2in}

\vspace*{+.1in}
David Owen Horace Cutler \\
Department of Mathematics, Tufts University, \\ 
Medford, MA 02155 \\
Email:  \href{mailto:david.cutler@tufts.edu}{\tt david.cutler@tufts.edu}

\vspace*{+.1in}
Mel Deaton \\
Department of Mathematics, Johns Hopkins University,\\
Baltimore, MD 21218 \\
Email: \href{mailto:rdeaton3@jh.edu}{\tt rdeaton3@jh.edu}
\end{center}

\abstract{We introduce two valuation-based deviations on convex bodies. Using a construction that allows us to associate to these deviations ``intrinsic'' pseudometrics, we establish various results which capture information about the underlying valuation in terms of the geometry of their induced deviations.}

\section{Introduction}

In the study of convex geometry, various notions by which to assign a ``distance'' between convex bodies are considered. An interesting class of examples is given by the \textit{intrinsic volume deviations} $\{\Delta_i\}_{i = 1}^n$, defined by
\begin{equation}\label{intrinsic volume deviation}
  \Delta_i(K,L) := V_i(K) + V_i(L) - 2V_i(K \cap L)
  \quad \text{for all } K, L \in \mathcal{K}^n_n.
\end{equation}
Here $V_i$ is the $i$-th intrinsic volume, and $\mathcal{K}^n_n$ is the space of compact, convex subsets of $\mathbb{R}^n$ with affine dimension $n$ (i.e. with nonempty interior). In the case $i = n$, (\ref{intrinsic volume deviation}) reduces to the well-studied symmetric difference metric 
\[
  \Delta_\textup{Vol}(K,L)
  := \textup{Vol}(K) + \textup{Vol}(L) - 2\textup{Vol}(K \cap L)
  \quad \text{for all } K, L \in \mathcal{K}^n_n.
\]
The study of special cases of these deviations is classical (see \cite{groemer}, \cite{https://doi.org/10.1112/S0025579300005179}), though the generality of (\ref{intrinsic volume deviation}) is more recent and was introduced in the study of approximation of convex bodies by polytopes (see \cite{Besau_2019}). 

Another seemingly distinct facet of convex geometry deals with the study of set functions modeled on the intrinsic volumes, known as \textit{valuations} (see \ref{def:valuation}). Within the class $\textup{Val}$ of continuous, translation-invariant valuations one can imitate the construction in (\ref{intrinsic volume deviation}): indeed, given $\phi \in \textup{Val}$ we introduce in Definition \ref{phi-div} the corresponding \textit{$\phi$ meet-deviation}
\[
  \Delta_\phi(K,L)
  := \phi(K) + \phi(L) - 2\phi(K \cap L),
  \qquad K,L \in \mathcal{K}^n,
\]
and in Definition \ref{phi-dual-div} the \textit{$\phi$ join-deviation}
\[
  \rho_\phi(K,L)
  := 2\phi(K \tilde \cup L) - \phi(K) - \phi(L),
  \qquad K,L \in \mathcal{K}^n,
\]
where $K \tilde \cup L$ is the convex hull of $K \cup L$. When $\phi$ is an intrinsic volume, $\Delta_\phi$ reduces exactly to an intrinsic volume deviation.

Both $\Delta_\phi$ and $\rho_\phi$ are symmetric by construction, and under a mild monotonicity assumption on $\phi$ they are also nonnegative. If $\phi$ is further required to be $k$-strictly monotone, in the sense that
\[
  K \subsetneq L
  \quad \Longrightarrow \quad
  \phi(K) < \phi(L),
  \qquad K,L \in \mathcal{K}^n_k \cup \{\varnothing\},
\]
then $\Delta_\phi$ and $\rho_\phi$ distinguish points of $\mathcal{K}^n_k$; see Proposition \ref{delta-semimetric}. In this context it is reasonable to regard $\Delta_\phi$ (and similarly $\rho_\phi$) as \textit{semimetrics} on $\mathcal{K}^n_k$.

As in the case of the intrinsic volume deviations, neither $\Delta_\phi$ nor $\rho_\phi$ is expected to satisfy a triangle inequality. Nevertheless, following a construction of Xia for quasimetric spaces \cite{xia2009geodesicproblemquasimetricspaces}, we can still analyze their geometry by considering their induced intrinsic pseudometrics $\ol \Delta_\phi$ and $\ol \rho_\phi$ (Definition \ref{def:induced-intrinsic-pseudometric}). 

Our first main result describes exactly when the intrinsic pseudometric $\ol \Delta_\phi$ is dominated by the original deviation $\Delta_\phi$ on $\mathcal{K}^n_k$, which is the natural intrinsicness condition in the absence of a triangle inequality. In particular, we show:
\begin{theorem}\label{thm:1}
Let $\phi\in\Val$ be $k$-strictly monotone and write its McMullen decomposition
$$
  \phi = \sum_{i=1}^n \phi_i,
$$
with each $\phi_i \in \textup{Val}$ $i$-homogeneous. Then
$$
  \ol \Delta_\phi(K,L) \le \Delta_\phi(K,L)
  \quad \text{for all } K,L \in \K^n_k
$$
if and only if $\phi$ has no $1$-homogeneous component, i.e. $\phi_1 \equiv 0.$
\end{theorem}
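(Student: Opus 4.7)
The plan is to read the inequality $\ol{\Delta}_\phi \le \Delta_\phi$ as the statement that $\Delta_\phi$ coincides with its own induced length pseudometric, and to address the two directions by constructing near-geodesic paths (sufficiency) and an obstruction (necessity). A preliminary observation is that in the Xia-style length construction, the length of any continuous path $\gamma: [0,1] \to \K^n_k$ from $K$ to $L$ is at least $\Delta_\phi(K,L)$ by the trivial two-point partition, so $\ol{\Delta}_\phi \ge \Delta_\phi$ holds automatically. The theorem thus reduces to a characterization of when equality $\ol{\Delta}_\phi = \Delta_\phi$ can be realized by a suitable sequence of paths.

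For sufficiency, assume $\phi_1 \equiv 0$ and construct near-geodesic paths. When $K \cap L \in \K^n_k$, a monotone decrease-then-increase path through $K \cap L$ achieves length exactly $\Delta_\phi(K,L)$: partitions through the midpoint telescope to $[\phi(K) - \phi(K \cap L)] + [\phi(L) - \phi(K \cap L)] = \Delta_\phi(K,L)$, while for partitions straddling the midpoint, the estimate $\phi(A \cap B) \ge \phi(K \cap L)$ from monotonicity keeps the total sum below $\Delta_\phi(K,L)$. For $K$ and $L$ with degenerate intersection, I would employ a \emph{shrink--translate--expand} path: shrink $K$ monotonically to a small homothet inside $K$, translate the small body to a matching small subset of $L$, then expand monotonically to $L$. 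The shrink and expand phases telescope. The key use of $\phi_1 \equiv 0$ comes in estimating the translate phase: for each homogeneous component $\phi_i$ with $i \ge 2$, $i$-homogeneity combined with translation-invariance forces the length contribution of translating an $\epsilon$-scale body by fixed total distance to be $O(\epsilon^{i-1})$, which vanishes as $\epsilon \to 0$. A hybrid construction handles the intermediate case where $K \cap L$ is low-dimensional but still carries nonzero $\phi$-value.

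For necessity, choose $K$ with $\phi_1(K) \ne 0$ and a translate $L = K + v$ disjoint from $K$, so $\Delta_\phi(K,L) = 2\phi(K)$. For any continuous path $\gamma$ in $\K^n_k$ from $K$ to $L$, the goal is to prove that the length of $\gamma$ is at least $2\phi(K) + c$ for some positive constant $c$ depending on $\phi_1$ and $v$. The idea is that, unlike higher-degree components, the contribution of $\phi_1$ to the length sums cannot be suppressed by shrinking: its $1$-homogeneity and Minkowski linearity imply that its contribution to any path accomplishing the displacement $v$ is bounded below by a scale-invariant, strictly positive translational cost. The main obstacle will be establishing this universal lower bound --- the argument must show that no clever detour in the space of convex bodies can avoid the $\phi_1$-induced overhead, which I expect to follow from a variational argument exploiting the Minkowski-linear structure of $\phi_1$ and a ``total variation'' estimate for $\phi_1$ along $\gamma$ that is invariant under rescaling of the body.
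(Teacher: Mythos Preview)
Your preliminary observation is incorrect and reflects a misunderstanding of the semimetric framework. In the Xia-style construction a piecewise metric continuous path $\gamma$ carries a partition $\{p_i\}$ such that $\Delta_\phi$ is a metric only on each image $\gamma([p_{i-1},p_i])$; the two-point estimate gives $L(\gamma|_{[p_{i-1},p_i]}) \ge \Delta_\phi(\gamma(p_{i-1}),\gamma(p_i))$ piece by piece, but summing these does \emph{not} yield $L(\gamma) \ge \Delta_\phi(K,L)$, because $\Delta_\phi$ has no triangle inequality across pieces. The paper is explicit that $\ol{\Delta}_\phi \ge \Delta_\phi$ can fail, and indeed if your claim held then the theorem would force $\Delta_\phi$ to be a genuine metric whenever $\phi_1 \equiv 0$, which is false already for $\phi = V_2$ on $\K^3_3$. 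Fortunately this error does not actually infect the two directions you must prove.

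For sufficiency your shrink--translate--expand idea is plausible and uses $\phi_1 \equiv 0$ in the right place, but differs from the paper's route. The paper avoids the translate phase entirely by bridging $K$ and $L$ with a single thin tube $H = [p,q] + rB$ and taking the purely monotone chain $K \to H \cap K \to H \to H \cap L \to L$; a subadditivity-plus-homogeneity argument (Lemma~\ref{lem:spheropolyhedral}) shows $\phi(H)$ can be made arbitrarily small when $\phi_1 \equiv 0$. Your translate phase is not obviously piecewise metric continuous (translates of a body are not nested), so you would have to replace it by a chain of monotone steps through successive intersections $M \cap (M + \delta v)$ and then carry out the $O(\epsilon^{i-1})$ estimate explicitly --- doable, but essentially rederiving Lemma~\ref{lem:spheropolyhedral} in disguise.

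The real gap is in necessity. You correctly sense that $1$-homogeneity should yield a scale-invariant translational cost, but you name no mechanism for producing a lower bound on \emph{every} admissible path. The paper's argument hinges on a compactness step (Lemma~\ref{lem:intersections}): on each metric piece of $\gamma$ one extracts a finer partition $\{r_k\}$ with $\gamma(r_{k-1}) \cap \gamma(r_k) \ne \varnothing$ throughout. Only with nonempty intersections can one invoke Firey's integral representation of $\phi_1$ together with $h_{A \cap B} \le \min(h_A, h_B)$ to bound each $\Delta_{\phi_1}$-term below and telescope to $L(\gamma) \ge \tfrac{1}{n}\int_{S^{n-1}} |h_K - h_L| \, dS(G_1,\dots,G_{n-1};\cdot)$, which grows linearly in the displacement while $\Delta_\phi(B, B+tv)=2\phi(B)$ stays bounded. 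Without the intersection guarantee, consecutive bodies along a partition may be disjoint, in which case $\Delta_\phi(A,B) = \phi(A)+\phi(B)$ is translation-invariant and records nothing about displacement; your ``variational argument exploiting Minkowski linearity'' is not yet a proof without this step or an equivalent.
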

This result directly expresses information about the underlying valuation $\phi$ in terms of the geometry of $\Delta_\phi$.

Our second main result concerns the extreme homogeneous cases. We establish: 
\begin{theorem}\label{thm:2}
    Let $\phi \in \textup{Val}$ be $k$-strictly monotone. If $\phi$ is $1$-homogeneous, then 
    $$\ol \Delta_\phi(K,L) = \rho_\phi(K,L) \quad \text{for all } K,L \in \mathcal{K}^n_1,$$
    where $\rho_\phi$ is a metric in this case. In a dual fashion, if $n>1$ and $\phi$ is $n$-homogeneous, then 
    $$\ol \rho_\phi(K,L) = \Delta_\phi(K,L) \quad \textup{for all } K, L \in \mathcal{K}^n_n,$$
    where $\Delta_\phi$ is a metric in this case.
\end{theorem}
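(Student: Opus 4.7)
My plan is to prove the two dual statements independently, exploiting the extra structure available at the extreme homogeneity degrees. In both cases the strategy is to (i) realize one inequality exactly by constructing an explicit \emph{monotone} chain whose $\Delta_\phi$- or $\rho_\phi$-contributions telescope, and (ii) establish the reverse inequality via a pointwise pairwise comparison between $\Delta_\phi$ and $\rho_\phi$, followed by a triangle-inequality argument along an arbitrary chain.

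I would handle the $n$-homogeneous case first, where the structure is cleaner: the only $n$-homogeneous element of $\textup{Val}$ is (up to scalar) Lebesgue measure, so $n$-strict monotonicity forces $\phi = c V_n$ with $c>0$. Hence $\Delta_\phi = c V_n(K \triangle L)$ is a scalar multiple of the classical symmetric-difference metric and in particular a bona fide metric. For $\overline\rho_\phi \le \Delta_\phi$, I would use the ``meet'' chain $K \supseteq \cdots \supseteq K \cap L \subseteq \cdots \subseteq L$ with monotone steps; for $A \supseteq B$ we have $A \tilde\cup B = A$, so $\rho_\phi(A,B) = \phi(A) - \phi(B)$, and the telescoping sum gives exactly $\phi(K) + \phi(L) - 2\phi(K \cap L) = \Delta_\phi(K,L)$, with refinement producing arbitrarily small steps. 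For the matching lower bound, inclusion--exclusion together with monotonicity gives $\phi(A \tilde\cup B) \ge \phi(A \cup B) = \phi(A) + \phi(B) - \phi(A \cap B)$, hence $\rho_\phi \ge \Delta_\phi$ pointwise; combined with the triangle inequality for the metric $\Delta_\phi$, any chain satisfies $\sum \rho_\phi(K_{i-1},K_i) \ge \sum \Delta_\phi(K_{i-1},K_i) \ge \Delta_\phi(K,L)$.

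For the $1$-homogeneous case, I would invoke the Klain--Schneider / Goodey--Weil representation $\phi(K) = \int_{S^{n-1}} h_K(u) \, d\mu(u)$ with $\mu$ an even measure on the sphere (positive under the monotonicity hypothesis). Since $h_{K \tilde\cup L} = \max(h_K, h_L)$, one immediately obtains
\[
\rho_\phi(K,L) = \int_{S^{n-1}} |h_K(u) - h_L(u)| \, d\mu(u),
\]
an $L^1$-type metric, which is genuine on $\mathcal{K}^n_1$ once strict monotonicity supplies sufficient support of $\mu$. For $\overline\Delta_\phi \le \rho_\phi$ I would use the dual ``join'' chain $K \subseteq \cdots \subseteq K \tilde\cup L \supseteq \cdots \supseteq L$: each monotone inclusion $A \subseteq B$ yields $A \cap B = A$ and hence $\Delta_\phi(A,B) = \phi(B) - \phi(A)$, and the telescoping sum collapses to $2\phi(K \tilde\cup L) - \phi(K) - \phi(L) = \rho_\phi(K,L)$. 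For $\overline\Delta_\phi \ge \rho_\phi$, the pointwise bound $h_{A \cap B}(u) \le \min(h_A(u),h_B(u))$ (valid whenever $A \cap B \neq \varnothing$) and the positivity of $\mu$ give
\[
\Delta_\phi(A,B) - \rho_\phi(A,B) = 2\int_{S^{n-1}} \bigl(\min(h_A, h_B) - h_{A \cap B}\bigr)\, d\mu \ge 0,
\]
and applying the $L^1$ triangle inequality for $\rho_\phi$ then yields $\sum \Delta_\phi(K_{i-1}, K_i) \ge \sum \rho_\phi(K_{i-1}, K_i) \ge \rho_\phi(K, L)$ along any chain whose consecutive pairs intersect.

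I expect the principal obstacle to be precisely this last restriction in the $1$-homogeneous lower bound, namely handling chain steps with $K_{i-1} \cap K_i = \varnothing$, where the pointwise inequality $\Delta_\phi \ge \rho_\phi$ can fail. I plan to control these by exploiting the fine-mesh / refinement character of the induced intrinsic pseudometric: either a Hausdorff-continuity argument forces overlap of consecutive bodies once the mesh is small enough, or disjoint consecutive pairs force both bodies to have $\phi$-mass at most the mesh size, so their cumulative contribution vanishes in the limit. A secondary technical point is that the monotone chains pass through $K \tilde\cup L$ or $K \cap L$, whose affine dimension may differ from $k$; I would need to check that the definition of the induced intrinsic pseudometric admits intermediate chain elements in a sufficiently wide ambient class of convex bodies, in which case both monotone constructions go through cleanly.
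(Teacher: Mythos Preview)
Your overall strategy mirrors the paper's closely: both arguments establish each inequality via (a) an explicit monotone interpolation whose contributions telescope to the target deviation, and (b) a pointwise comparison between $\Delta_\phi$ and $\rho_\phi$ combined with the triangle inequality for whichever of the two is already a metric. In the $1$-homogeneous lower bound, your handling of disjoint consecutive pairs (your ``option 2'') is essentially the content of the paper's Lemma~\ref{lem:intersections}, and since $K \tilde\cup L \in \mathcal{K}^n_1$ whenever $K,L \in \mathcal{K}^n_1$, there is in fact no dimensional obstruction on the join side.

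The genuine gap is in the $n$-homogeneous upper bound $\ol\rho_\phi \le \Delta_\phi$. Your meet chain passes through $K \cap L$, but when $K$ and $L$ are disjoint there is nothing to pass through, and your proposed fix of widening the ambient class cannot repair this: there is simply no intermediate body. Even when $K\cap L$ is nonempty but of dimension less than $n$, enlarging the class would alter the definition of $\ol\rho_\phi$ rather than prove the stated result on $\mathcal{K}^n_n$. The paper resolves this by invoking the full backward construction from Theorem~\ref{thm:1}: one bridges $K$ and $L$ by a thin spheropolyhedron $[p,q]+rB$ with $p\in K$, $q\in L$, whose $\phi$-value can be driven to zero precisely because $\phi$ has no $1$-homogeneous component (Lemma~\ref{lem:spheropolyhedral}); this is exactly where the hypothesis $n>1$ is used. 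That construction is a substantive geometric ingredient you are missing, not a technicality about the ambient class.
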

These results make the relationship between $\Delta_\phi$ and $\rho_\phi$ explicit in the typical cases in which one is a metric.

We additionally confirm that $\rho_\phi$ is intrinsic (in the normal metric-geometric sense) in the $1$-homogeneous case, and indeed show it turns $\mathcal{K}^n$ into a geodesic space:

\begin{theorem}\label{thm:3}
    Let $\phi$ be $k$-strictly monotone. If $\phi$ is $1$-homogeneous, then $(\mathcal{K}^n, \rho_\phi)$ is a length space that admits a shortest path $\gamma : [a,b] \rightarrow \mathcal{K}^n$ between any given $K$ and $L$ in $\mathcal{K}^n$.
\end{theorem}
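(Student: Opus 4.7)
The plan is to exhibit, for every $K, L \in \mathcal{K}^n$, an explicit shortest path joining them, routed through their convex hull $M := K \tilde\cup L$. The main tool is the Minkowski interpolation: for $A \subseteq B$ in $\mathcal{K}^n$, set
\[
    \alpha_{A,B}(t) := (1-t)A + tB, \qquad t \in [0,1].
\]
This is Hausdorff-continuous, hence $\rho_\phi$-continuous by continuity of $\phi$ and of $\tilde\cup$. A short convexity argument shows it is inclusion-monotone in $t$: any element $(1-s)a + sb \in \alpha_{A,B}(s)$ can be rewritten as $(1-t)a + tb'$ with $b' := \tfrac{t-s}{t} a + \tfrac{s}{t} b$, which lies in $B$ by convexity and $A \subseteq B$. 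Thus $\alpha_{A,B}(s) \subseteq \alpha_{A,B}(t)$ whenever $s \le t$, so $\alpha_{A,B}(s) \tilde\cup \alpha_{A,B}(t) = \alpha_{A,B}(t)$ and
\[
    \rho_\phi(\alpha_{A,B}(s), \alpha_{A,B}(t)) = \phi(\alpha_{A,B}(t)) - \phi(\alpha_{A,B}(s)).
\]
Over any partition this telescopes to $\phi(B) - \phi(A)$, independently of refinement; the $\rho_\phi$-length of $\alpha_{A,B}$ is exactly $\phi(B) - \phi(A)$.

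Next, I define the candidate geodesic $\gamma : [0,1] \to \mathcal{K}^n$ as the concatenation of $\alpha_{K, M}$ reparameterized onto $[0, 1/2]$ with the time-reverse of $\alpha_{L, M}$ on $[1/2, 1]$, so that $\gamma(0) = K$, $\gamma(1/2) = M$, and $\gamma(1) = L$. The key observation is that $K \subseteq \gamma(s) \subseteq M$ for $s \in [0, 1/2]$ and $L \subseteq \gamma(t) \subseteq M$ for $t \in [1/2, 1]$, so whenever $s \le 1/2 \le t$ the convex hull $\gamma(s) \tilde\cup \gamma(t)$ contains $K \tilde\cup L = M$ and is contained in $M$; thus $\gamma(s) \tilde\cup \gamma(t) = M$. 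Combining this with the inclusion-monotone telescoping on each leg, a bookkeeping computation over any partition of $[0,1]$ (whether or not it contains $t = 1/2$) produces the same partial sum
\[
    2\phi(M) - \phi(K) - \phi(L) = \rho_\phi(K, L).
\]
Hence the $\rho_\phi$-length of $\gamma$ equals $\rho_\phi(K, L)$.

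Finally, for any continuous path $\gamma' : [a,b] \to \mathcal{K}^n$ from $K$ to $L$, the trivial one-interval partition $\{a, b\}$ already yields that the length of $\gamma'$ is at least $\rho_\phi(\gamma'(a), \gamma'(b)) = \rho_\phi(K, L)$. Therefore $\gamma$ realizes the infimum, the induced intrinsic pseudometric satisfies $\ol \rho_\phi = \rho_\phi$ on $\mathcal{K}^n$, and $(\mathcal{K}^n, \rho_\phi)$ is a length space admitting a shortest path between any pair of bodies.

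The main obstacle is the partition-sum bookkeeping when the partition does not already include the turning point $t = 1/2$; it is precisely the identity $\gamma(s) \tilde\cup \gamma(t) = M$ for $s \le 1/2 \le t$ that lets the unique cross-midpoint term absorb into the telescoping sum and yield the uniform value $\rho_\phi(K, L)$. Everything else reduces to the convexity calculation for Minkowski interpolation and the definition of $\rho_\phi$.
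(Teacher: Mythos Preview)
Your proof is correct, but it takes a genuinely different route from the paper's. The paper does not go through $M = K \tilde\cup L$ at all: instead it uses Firey's characterization (Proposition~\ref{prop:firey}) to write $\rho_\phi(K,L) = \int_{S^{n-1}} |h_K - h_L|\,dS(G_1,\dots,G_{n-1};\cdot)$, and then observes that the \emph{direct} Minkowski interpolation $K_t = (1-t)K + tL$ (no nesting assumed) satisfies $\rho_\phi(K_t,K_s) = |t-s|\,\rho_\phi(K,L)$ just by linearity of support functions inside the integral. This immediately makes $t \mapsto K_t$ a constant-speed geodesic, and the length computation is a one-liner. Your argument is more elementary in that it avoids Firey entirely and works purely from the definition of $\rho_\phi$ together with the inclusion-monotonicity of nested Minkowski interpolation (essentially Lemma~\ref{lem:piecewise-minkowski-is-piecewise-metric} and Remark~\ref{rem3.3}); the price is the cross-midpoint bookkeeping, which you handle correctly via the identity $\gamma(s)\tilde\cup\gamma(t) = M$ for $s \le \tfrac12 \le t$. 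The paper's approach buys a cleaner geodesic and a shorter proof, while yours shows that the structure theorem for $1$-homogeneous valuations is not actually needed for this particular result.
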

Finally, we perform a similar verification that $\Delta_\phi$ is intrinsic in the $n$-homogeneous case (i.e. that the symmetric difference metric is intrinsic), and characterize when it admits shortest paths:

\begin{theorem}\label{thm:4}
    Let $K, L \in \mathcal{K}^n_n$ for any $n$. Then there is continuous $\gamma : [0,1] \rightarrow \mathcal{K}^n_n$ such that 
    \begin{align*}
    \gamma(0) = K, \gamma(1) = L 
    \quad \text{and} \quad 
    L(\gamma) = \Delta_{\textup{Vol}}(K,L)
    \end{align*}
    if and only if there exists a ``bridging body'' $M \in \mathcal{K}^n_n$ such that 
    \begin{align*}
        K \cap L \subseteq M \subseteq K \cup L \quad (\textup{a.e.}) 
        \quad \text{and} \quad 
        M \cap K, \quad M \cap L \in \mathcal{K}^n_n.
    \end{align*}
    In particular, there is never a geodesic segment joining disjoint bodies $K,L \in \mathcal{K}^n$, and thus $(\mathcal{K}^n, \Delta)$ is never a geodesic space for any $n$.
\end{theorem}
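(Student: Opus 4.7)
The plan is to prove both implications of the equivalence and then deduce the disjoint-bodies corollary. For the forward direction, I start from the fact that $\Delta_{\textup{Vol}}$ is a metric on $\mathcal{K}^n_n$ (by Theorem \ref{thm:2}), so for any continuous $\gamma : [0,1] \to \mathcal{K}^n_n$ with $L(\gamma) = \Delta_{\textup{Vol}}(K,L)$ and any $t \in [0,1]$, the triangle inequality together with additivity of length forces
\[
  \Delta_{\textup{Vol}}(K, \gamma(t)) + \Delta_{\textup{Vol}}(\gamma(t), L) = \Delta_{\textup{Vol}}(K, L).
\]
A direct set-theoretic expansion shows this identity is equivalent to $\textup{Vol}((K \cap L) \setminus \gamma(t)) = 0$ and $\textup{Vol}(\gamma(t) \setminus (K \cup L)) = 0$, i.e.\ $K \cap L \subseteq \gamma(t) \subseteq K \cup L$ almost everywhere. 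To upgrade this ``almost bridging'' property to a genuine bridging body I will exploit that a finite-length curve is continuous in its metric, so $t \mapsto \textup{Vol}(\gamma(t) \cap K)$ and $t \mapsto \textup{Vol}(\gamma(t) \cap L)$ are continuous. Their zero sets $A_0, A_1$ are closed, and $\gamma(t) \subseteq K \cup L$ a.e.\ combined with $\textup{Vol}(\gamma(t)) > 0$ precludes simultaneous vanishing, so $A_0 \cap A_1 = \emptyset$; since $A_0, A_1 \subsetneq [0,1]$ (as $0 \notin A_0$ and $1 \notin A_1$), connectedness yields a $t^*$ at which both volumes are strictly positive, and $M := \gamma(t^*)$ is the desired bridging body.

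For the backward direction, given a bridging body $M$, I would build an explicit geodesic by concatenating four Minkowski interpolations along the chain
\[
  K \ \longrightarrow \ K \cap M \ \longrightarrow \ M \ \longrightarrow \ M \cap L \ \longrightarrow \ L.
\]
Each leg joins a nested pair: for $A \subseteq B$ the Minkowski segment $s \mapsto (1-s)A + sB$ is monotone nondecreasing in $s$ (the inclusion propagates through convex combinations), so its $\Delta_{\textup{Vol}}$-length telescopes to $\textup{Vol}(B) - \textup{Vol}(A) = \Delta_{\textup{Vol}}(A,B)$. Summing the four leg lengths and using $K \cap L \subseteq M \subseteq K \cup L$ a.e.\ together with inclusion-exclusion to rewrite $\textup{Vol}(M) = \textup{Vol}(M \cap K) + \textup{Vol}(M \cap L) - \textup{Vol}(K \cap L)$ collapses the total to $\Delta_{\textup{Vol}}(K, L)$ exactly. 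Every leg stays in $\mathcal{K}^n_n$ since it contains one of the full-dimensional bodies $K \cap M$ or $M \cap L$.

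For the ``in particular'' clause I would argue the contrapositive. If $K, L \in \mathcal{K}^n$ are disjoint, compactness and hyperplane separation produce a hyperplane $H$ that strictly separates them, along with an open slab of positive volume about $H$ lying outside $K \cup L$. Any hypothetical bridging body $M$ must intersect both open half-spaces of $H$ (since $M \cap K$ and $M \cap L$ are full-dimensional), and convexity then forces $M$ to contain a convex region crossing the slab, which has positive volume outside $K \cup L$, contradicting $M \subseteq K \cup L$ a.e. The main equivalence thus rules out a geodesic between any disjoint pair, so $(\mathcal{K}^n, \Delta)$ fails to be a geodesic space.

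The principal obstacle I anticipate is the selection step in the forward direction: while the a.e.\ inclusions follow cleanly from geodesic additivity, promoting these ``almost bridging'' intermediate bodies along $\gamma$ to one whose intersections with both $K$ and $L$ are genuinely $n$-dimensional requires the metric continuity of $\gamma$ (extracted from finiteness of its length) together with the connectedness argument that rules out the degenerate scenario in which $\gamma$ abruptly swaps from the $K$-side to the $L$-side without overlap. The backward direction and the disjoint case, by comparison, reduce to essentially routine bookkeeping once the interpolation chain and the separating hyperplane are in hand.
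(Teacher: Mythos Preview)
Your proposal is correct and follows the same overall architecture as the paper: the backward direction via the four-leg Minkowski interpolation $K \to K\cap M \to M \to M\cap L \to L$, and the forward direction via geodesic additivity forcing $K\cap L \subseteq \gamma(t) \subseteq K\cup L$ (a.e.) at every time, followed by a selection of some $t^*$ with $\textup{Vol}(\gamma(t^*)\cap K),\,\textup{Vol}(\gamma(t^*)\cap L)>0$.

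The one genuine difference is in this selection step. The paper introduces $w(t)=\textup{Vol}(\gamma(t)\cap(K\setminus L))$ and $z(t)=\textup{Vol}(\gamma(t)\cap(L\setminus K))$, proves they are \emph{monotone} by invoking geodesic additivity on the sub-paths $\gamma|_{[0,t]}$ (so that $K\cap\gamma(t)\subseteq\gamma(s)$ a.e.\ for $s\le t$), and then runs a case split on whether $\textup{Vol}(K\cap L)>0$ to locate $t^*$ by contradiction. Your argument is more economical: the zero sets $A_0,A_1$ of $t\mapsto\textup{Vol}(\gamma(t)\cap K)$ and $t\mapsto\textup{Vol}(\gamma(t)\cap L)$ are closed, disjoint (since $\gamma(t)\subseteq K\cup L$ a.e.\ with $\textup{Vol}(\gamma(t))>0$), and each is proper (as $0\notin A_0$, $1\notin A_1$), so connectedness of $[0,1]$ immediately rules out $A_0\cup A_1=[0,1]$. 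This bypasses both the monotonicity computation and the case analysis, at the cost of not recording the extra structural fact that $w,z$ are monotone along the geodesic. One minor quibble: you need not ``extract'' continuity of $\gamma$ from finiteness of its length---continuity is part of the hypothesis; what you actually use is the Lipschitz bound $|\textup{Vol}(A\cap K)-\textup{Vol}(B\cap K)|\le\Delta_{\textup{Vol}}(A,B)$. Your separating-hyperplane argument for the disjoint corollary is also more explicit than the paper, which leaves that deduction to the reader.
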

Our results, taken together, establish a means by which to study a large class of valuations through metric geometry, as do they confirm an intimate, dual relationship between $\Delta_\phi$ and $\rho_\phi$.

\subsection{Organization of the paper}

In Section $2$ we recount the relevant preliminaries and give our basic definitions. Section $3$ contains the lemmas and proofs needed to establish Theorems \ref{thm:1} and \ref{thm:2}, and Section $4$ deals with Theorems \ref{thm:3} and \ref{thm:4}.

\section{Preliminaries and definitions}

\subsection{Valuations}

We use $\mathcal{K}^n$ to denote the space of nonempty, compact, and convex subsets of $\mathbb{R}^n$. Recall a \textit{valuation}\label{def:valuation} is a function $\phi : \mathcal{K}^n \rightarrow G$ satisfying 
\begin{equation}\label{valuation definition}
    \phi(K \cup L) + \phi(K \cap L) = \phi(K) + \phi(L) \quad \text{for all } K, L \in \mathcal{K}^n,
\end{equation}
where $(G, +)$ is an abelian monoid. For our purposes we will take $G = \mathbb{R}$ and extend the domain of such a $\phi$ by assigning $\phi(\varnothing) = 0$.

The union $K \cup L$ is not always convex, so the left side of (\ref{valuation definition}) is \textit{a priori} not always defined. Nonetheless, if $\phi$ is continuous, an application of Groemer's extension theorem (see \cite{schneider2014} Theorem 6.2.5) shows $\phi$ satisfies (\ref{valuation definition}) on the convex ring $U(\mathcal{K}^n)$, where
$$U(\mathcal{K}^n) = \left \{ \bigcup_{i = 1}^N K_i : K_i \in \mathcal{K}^n, \{1, \dots, N\} \subseteq \mathbb{N} \right\}.$$
Here by continuity we mean continuity with respect to the \textit{Hausdorff distance} $d_\mathcal{H}$ on $\mathcal{K}^n$. Remember that 
$$d_\mathcal{H}(K,L) := \inf\{\delta > 0 : K \subseteq L + \delta B \quad \text{and} \quad L \subseteq K + \delta B\} \quad \text{for all } K, L \in \mathcal{K}^n,$$
where $B$ is the closed unit ball and $+$ is the Minkowski sum. Alternatively, we have
$$d_\mathcal{H}(K,L) = \|h_K - h_L\|_\infty \quad \text{for all } K,L \in \mathcal{K}^n,$$
the right side denoting the $L^\infty$-distance between \textit{support functions} $h_K$ and $h_L$ on $S^{n-1}$. We will assume some considerable knowledge of the properties of support functions, all of which is contained in \cite{schneider2014}.

One checks that $d_\mathcal{H}$ is a metric on $\mathcal{K}^n$, and so we decree a valuation $\phi$ to be \textit{continuous} if for fixed $K \in \mathcal{K}^n$ and a sequence $(K_i) \subseteq \mathcal{K}^n$,
$$d_\mathcal{H}(K_i, K) \rightarrow 0 \quad \Longrightarrow \quad |\phi(K_i) - \phi(K)| \rightarrow 0.$$
We will also want our valuations to be invariant under translations. Of course, a valuation $\phi$ is \textit{translation-invariant} if for all $K \in \mathcal{K}^n$ and $x \in \mathbb{R}^n$ we have 
$$\phi(K + \{x\}) = \phi(K).$$
We will use the notation $\textup{Val}$ to denote the space of all continuous, translation-invariant valuations. We note now a key result for studying $\textup{Val}$.

\begin{prop}[McMullen's decomposition, \cite{alesker2014} Corollary 1.1.7]\label{mcmullen-decomp}
    Let $\phi \in \textup{Val}$. Then 
    $$\phi(K) = \sum_{i = 0}^n \phi_i(K) \quad \text{for all } K \in \mathcal{K}^n,$$
    for some $\phi_i \in \textup{Val}$, $\phi_i$ $i$-homogeneous, i.e. 
    $$\phi_i(tK) = t^i\phi(K) \quad \text{for all } K \in \mathcal{K}^n, t \geq 0.$$
\end{prop}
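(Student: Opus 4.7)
The plan is to obtain the decomposition by extracting the components $\phi_i$ as coefficients in a polynomial expansion of $\phi(tK)$ in the dilation parameter $t \geq 0$. The central lemma I would establish is: for every $K \in \mathcal{K}^n$, the function $t \mapsto \phi(tK)$ is a polynomial in $t$ of degree at most $n$. I would first prove this on polytopes by induction on the dimension of the ambient affine hull, splitting a polytope $P$ into two pieces along a hyperplane and using the valuation identity $\phi(tP) = \phi(tP_+) + \phi(tP_-) - \phi(t(P_+ \cap P_-))$, with the last term governed by the inductive hypothesis. Continuity of $\phi$ and density of polytopes in $(\mathcal{K}^n, d_\mathcal{H})$ then extend the polynomial expansion to arbitrary $K$: a pointwise limit of a uniformly degree-bounded sequence of polynomials is itself a polynomial of the same degree bound, and its coefficients are continuous functionals of the limit.

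Once the expansion $\phi(tK) = \sum_{i=0}^{n} \phi_i(K) \, t^i$ is established, I would promote each coefficient $\phi_i$ to an element of $\textup{Val}$. Evaluating the expansion at $n+1$ distinct values $t_0, \ldots, t_n > 0$ and inverting the resulting Vandermonde system expresses each $\phi_i(K)$ as a fixed linear combination of the values $\phi(t_j K)$. Continuity, translation invariance, and the valuation property of $\phi_i$ then pass through this linear formula: for the valuation property, one compares coefficients in the identity $\phi(t(K \cup L)) + \phi(t(K \cap L)) = \phi(tK) + \phi(tL)$ (valid whenever $K \cup L$ is convex, and in particular for the scaled bodies $tK, tL$ whose union is convex iff $K \cup L$ is). Finally, $i$-homogeneity follows by matching coefficients in
\[
  \sum_{i=0}^{n} \phi_i(sK) \, t^i = \phi(tsK) = \sum_{i=0}^{n} \phi_i(K) \, (ts)^i = \sum_{i=0}^{n} s^i \phi_i(K) \, t^i,
\]
which yields $\phi_i(sK) = s^i \phi_i(K)$ for all $s \geq 0$.

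The main obstacle is the polynomial-behavior lemma for polytopes, in particular the degree bound $n$. The inductive hyperplane-cut argument must be run carefully so that the additional intersection term lives in a lower-dimensional affine subspace (where one invokes the inductive hypothesis after translating to apply translation invariance), and one must verify that the degrees accumulated across the cuts do not exceed the ambient dimension. A cleaner alternative, if one is willing to invoke more machinery, is McMullen's polynomiality theorem for Minkowski sums $\phi(t_1 K_1 + \cdots + t_m K_m)$ specialized to $m = 1$, which packages the degree bound into a single statement proved by a simplex-subdivision argument; but either route requires the genuinely combinatorial input. Once this lemma is in place, the remainder of the proof is linear algebra in the coefficients and standard limiting arguments.
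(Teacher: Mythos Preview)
The paper does not prove this proposition; it is quoted from the literature (Alesker's notes, Corollary~1.1.7) and used as a black box. So there is no in-paper argument to compare against, and I can only assess your sketch on its own merits.

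Your overall architecture is exactly McMullen's: show $t\mapsto\phi(tK)$ is a polynomial of degree at most $n$, read off the $\phi_i$ as coefficients, and recover continuity, translation invariance, the valuation identity, and homogeneity by linear algebra in those coefficients. The second half of your plan (Vandermonde inversion, coefficient matching) is clean and correct.

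The gap is precisely where you flag it. Your hyperplane-cut induction, as written, does not close: splitting $P$ into $P_+$ and $P_-$ reduces only the \emph{intersection} $P_+\cap P_-$ to lower dimension, while $P_+$ and $P_-$ remain full-dimensional polytopes of no lower complexity than $P$. So the identity
\[
\phi(tP)=\phi(tP_+)+\phi(tP_-)-\phi(t(P_+\cap P_-))
\]
lets you invoke the inductive hypothesis on the last term but gives you nothing for the first two. You would need a second induction parameter (and a cut that genuinely decreases it) to make this go, and you do not supply one. Your fallback --- invoke McMullen's Minkowski-sum polynomiality --- is essentially the theorem itself in a different guise, so it does not count as an independent argument. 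The honest statement is that the degree-$n$ polynomiality lemma is the entire content of the result, and its proof requires a genuinely combinatorial input (e.g.\ a simplex decomposition argument or McMullen's original cone/weight machinery) that your sketch gestures at but does not provide.
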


We define now the principal object of our study. 

\begin{defn}\label{phi-div}
    Let $\phi \in \textup{Val}$. We define the $\phi$ meet-deviation $\Delta_\phi$ by 
    $$\Delta_\phi(K,L) = \phi(K) + \phi(L) - 2\phi(K \cap L) \quad \text{for all } K, L \in \mathcal{K}^n.$$
\end{defn}
Of course, when $\phi$ is an intrinsic volume, this definition reduces to an intrinsic volume deviation studied in \cite{Besau_2019}. Of course, $\Delta_\phi$ is symmetric. We will want it to be nonnegative as well; this is the case if $\phi \geq 0$ and $\phi$ is \textit{monotone}, i.e. 
\begin{equation}\label{monotone definition}
K \subseteq L \quad \Longrightarrow \quad \phi(K) \leq \phi(L) \quad \text{for all } K, L \in \mathcal{K}^n.\end{equation}
Monotonicity of $\phi$ has monotonicity of the $\phi_i$ in its McMullen decomposition (see \cite{bernig2010hermitianintegralgeometry} Theorem 2.12). If we have the stronger 
\begin{equation}\label{k-strictly monotone definition}
K \subsetneq L \quad \Longrightarrow \quad \phi(K) < \phi(L) \quad \text{for all } K, L \in \mathcal{K}^n_k \cup \{\varnothing\},\end{equation}
 where $\mathcal{K}^n_k$ is the subspace of $\mathcal{K}^n$ of sets with affine dimension at least $k \in \{1, \dots, n\}$, we say $\phi$ is \textit{$k$-strictly monotone}. Our convention will be that $\phi_0 \equiv 0$ for strictly monotone $\phi$; indeed $\phi_0$ in this setting is always a constant $c \in \mathbb{R}$ on $\mathcal{K}^n$, so 
 \begin{align*}
     \Delta_\phi(K,L) &= \phi(K) + \phi(L) - 2\phi(K \cap L) \\
     &= (\phi(K) - c) + (\phi(L) - c) - 2(\phi(K \cap L) - c) \\
     &= \Delta_{\phi - \phi_0}(K, L)
 \end{align*}
 so long as $K \cap L$ is nonempty. In this sense $\Delta_\phi$ and $\Delta_{\phi - \phi_0}$ are (locally) indistinguishable, so we will just always take $\phi_0 \equiv 0$.

 Note here then that (\ref{k-strictly monotone definition}) is truly stronger than (\ref{monotone definition}). Indeed, if $\phi \in \textup{Val}$ is $k$-strictly monotone, given $K, L \in \mathcal{K}^n$ with $K \subsetneq L$ we have for $\epsilon > 0$ that 
$$K + \epsilon B \subsetneq L + \epsilon B.$$
As $K + \epsilon B, L + \epsilon B \in \mathcal{K}^n_n \subseteq \mathcal{K}^n_k$ then, we get 
$$\phi(K + \epsilon B) < \phi(L + \epsilon B).$$
Taking $\epsilon \rightarrow 0$ then has $\phi(K) \leq \phi(L)$ by the continuity of $\phi$. A similar argument shows we have $\phi \geq 0$ on $\mathcal{K}^n$ for strictly monotone $\phi$. Strict monotonicity will also be the right condition for $\Delta_\phi$ to distinguish points, as we note now.

\begin{prop}\label{delta-semimetric}
    Let $\phi \in \textup{Val}$ be $k$-strictly monotone. Then $\Delta_\phi$ distinguishes points in $\mathcal{K}^n_k$, i.e. 
    $$\Delta_\phi(K,L) = 0 \quad \Longrightarrow \quad K = L \quad \text{for all } K, L \in \mathcal{K}^n_k.$$
    \begin{proof}
        Let $K,L \in \mathcal{K}^n_k$ with $\Delta_\phi(K,L) = 0$. We have
        \begin{equation}\label{prop 2.3 proof}
        0 = \Delta_\phi(K,L) = \phi(K) + \phi(L) - 2\phi(K \cap L) = [\phi(K) - \phi(K \cap L)] + [\phi(L) - \phi(K \cap L)].\end{equation}
        Strict monotonicity forces $\phi(K), \phi(L) > 0$, so (\ref{prop 2.3 proof}) cannot hold if $K \cap L = \varnothing.$ Moreover, if $K \cap L \in \mathcal{K}^n_k$, strict monotonicity has $K = K \cap L = L$. 
        
        Say then $K \cap L$ is nonempty, of lower affine dimension than $k$, and (\ref{prop 2.3 proof}) still holds. We of course then have $d_\mathcal{H}(K, K \cap L) > 0$, so take $\delta > 0$ with 
        $$\delta < d_\mathcal{H}(K, K \cap L).$$
        By the definition of the Hausdorff distance then  
        $$M_\delta := [(K \cap L) + \delta B]  \cap K\subsetneq K \quad$$
        where $M_\delta \in \mathcal{K}^n_k$. We get $\phi(K \cap L) \leq \phi(M_\delta)$ by  monotonicity and $\phi(M_\delta) < \phi(K)$ by strict monotonicity. Thus
        $$0 = \phi(K) - \phi(K \cap L) \geq \phi(K) - \phi(M_\delta),$$
        but this has $\phi(K) \leq \phi(M_\delta)$, a contradiction. 
    \end{proof}
\end{prop}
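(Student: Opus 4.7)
The plan is to unpack the hypothesis by writing
\[
  \Delta_\phi(K,L) = [\phi(K) - \phi(K \cap L)] + [\phi(L) - \phi(K \cap L)],
\]
and observing that $k$-strict monotonicity implies ordinary monotonicity (as noted in the paragraph preceding the statement), so both bracketed terms are nonnegative. Vanishing of the sum then forces $\phi(K) = \phi(K \cap L) = \phi(L)$, and the remaining task is to promote this numerical equality to the set-theoretic identity $K = L$. I would split into cases according to the nature of $K \cap L$.

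Two of the cases are essentially immediate. If $K \cap L = \varnothing$, then $\phi(K \cap L) = 0$, while $k$-strict monotonicity applied to $\varnothing \subsetneq K$ (permitted because $K \in \mathcal{K}^n_k$) gives $\phi(K) > 0$, contradicting $\phi(K) = \phi(K \cap L)$. If instead $K \cap L \in \mathcal{K}^n_k$, then strict monotonicity applies directly to the containments $K \cap L \subseteq K$ and $K \cap L \subseteq L$, and the established equalities of $\phi$-values force $K = K \cap L = L$.

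The main obstacle is the intermediate case, where $K \cap L$ is nonempty but has affine dimension strictly less than $k$: here the strict monotonicity hypothesis is not directly applicable to the containment $K \cap L \subsetneq K$, since $K \cap L \notin \mathcal{K}^n_k$. My idea is to interpose a body from $\mathcal{K}^n_k$ strictly between $K \cap L$ and $K$ by thickening the intersection. Since $K \in \mathcal{K}^n_k$ while $K \cap L$ has smaller dimension, one has $d_\mathcal{H}(K, K \cap L) > 0$; for any positive $\delta$ below this distance, I would set $M_\delta := [(K \cap L) + \delta B] \cap K$. This $M_\delta$ is convex and compact, is strictly contained in $K$ by the definition of the Hausdorff distance, and contains a relative neighborhood of $K \cap L$ inside $K$, so it inherits the affine dimension of $K$ and lies in $\mathcal{K}^n_k$. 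Monotonicity then gives $\phi(K \cap L) \le \phi(M_\delta)$, and $k$-strict monotonicity applied to $M_\delta \subsetneq K$ gives $\phi(M_\delta) < \phi(K)$, together contradicting $\phi(K) = \phi(K \cap L)$ and ruling out this final case.
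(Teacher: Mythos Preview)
Your proof is correct and follows essentially the same approach as the paper's: the same decomposition into two nonnegative summands, the same three-case split on $K \cap L$, and in the delicate low-dimensional case the identical thickening construction $M_\delta := [(K \cap L) + \delta B] \cap K$ to interpose a body of $\mathcal{K}^n_k$ strictly between $K \cap L$ and $K$.
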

It is perhaps natural then with this result to ask if $\Delta_\phi$ will generally satisfy a triangle inequality. The answer will be essentially \textit{no} (see \cite{Besau_2019} Lemma 24), as monotonicity of $\phi$ on $\mathcal{K}^n$ does not imply its extension to $U(\mathcal{K}^n)$ is monotone. However, if nontrivial $\phi \in \textup{Val}$ is $n$-homogeneous and nonnegative, then  
$$\Delta_{\phi}(K,L) = c\left [\textup{Vol}(K) + \textup{Vol}(L) - 2\textup{Vol}(K \cap L)\right ] \quad \text{for all } K, L \in \mathcal{K}^n$$
and some $c > 0$ due to a result of Hadwiger (see \cite{https://doi.org/10.1112/S0025579300014625} Theorem 3.4). In this case $\Delta_\phi$ is just a scaled version of the symmetric difference metric studied by Groemer in \cite{groemer}, which we will return to later.

Before moving on, we mention a ``dual'' construction to $\Delta_\phi$, which will be useful to consider. 
\begin{defn}\label{phi-dual-div}
    Let $\phi \in \textup{Val}$. We define the  $\phi$ join-deviation
    $\rho_\phi$ by
    $$\rho_\phi(K, L) = 2\phi(K \tilde \cup L) - \phi(K) - \phi(L) \quad \text{for all } K,L \in \mathcal{K}^n,$$
    where $K \tilde \cup L$ is the convex hull of $K \cup L$.
\end{defn}
 Effectively the same observations for $\Delta_\phi$ hold for $\rho_\phi$, except it will have a triangle inequality essentially only when $\phi \in \textup{Val}$ is $1$-homogeneous and monotone. Swapping $\phi$ here for an intrinsic volume gives a construction considered by Florian in \cite{florian}, and a particular $\rho_\phi$ is studied in \cite{us}.

In the case of either Definition \ref{phi-div} or \ref{phi-dual-div}, our working premise will be that $\phi \in \textup{Val}$ is $k$-strictly monotone. We note some examples of such valuations. 

\begin{example}\label{ex:V_n}
    The Lebesgue measure $\textup{Vol}$ is $n$-strictly monotone, as can be argued elementarily (see Figure \ref{figure:lebesgue}).

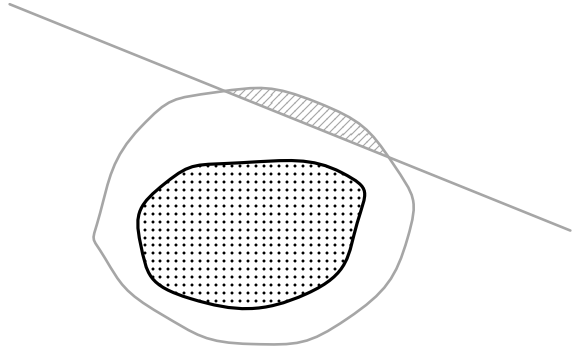
\begin{figure}[H]
\centering
\begin{tikzpicture}[scale=0.67, line cap=round, line join=round]

  \def\h{2.05}
  \def\ang{-22}

  \begin{scope}
    \clip plot[smooth cycle, tension=0.9] coordinates{
      (-3,0) (-2.2,1.7) (-0.7,2.4) (1.3,2.2) (2.8,1.0)
      (3.1,-0.5) (1.9,-2.1) (0.0,-2.6) (-1.8,-2.0) (-2.9,-0.9)
    };
    \begin{scope}[rotate=\ang]
      \clip (-6,\h) rectangle (6,6);
      \fill[pattern=north east lines, pattern color=gray!70] (-6,\h) rectangle (6,6);
    \end{scope}
  \end{scope}

  \draw[gray!70, line width=1.0pt]
    plot[smooth cycle, tension=0.9] coordinates{
      (-3,0) (-2.2,1.7) (-0.7,2.4) (1.3,2.2) (2.8,1.0)
      (3.1,-0.5) (1.9,-2.1) (0.0,-2.6) (-1.8,-2.0) (-2.9,-0.9)
    };

  \fill[pattern=dots, pattern color=black]
    plot[smooth cycle, tension=0.95] coordinates{
      (-2.2,-0.6) (-1.7,0.6) (-0.2,1.0) (1.8,0.8)
      (2.1,-0.2) (1.1,-1.6) (-1.2,-1.7)
    };
  \draw[line width=1.2pt]
    plot[smooth cycle, tension=0.95] coordinates{
      (-2.2,-0.6) (-1.7,0.6) (-0.2,1.0) (1.8,0.8)
      (2.1,-0.2) (1.1,-1.6) (-1.2,-1.7)
    };

  \begin{scope}[rotate=\ang]
    \draw[gray!70, line width=1.0pt] (-6,\h) -- (6,\h);
  \end{scope}

\end{tikzpicture}
\caption{A diagonal hyperplane slices a cap of positive area from \(L\) while avoiding \(K\); the cap lies in \(L\setminus K\).}\label{figure:lebesgue}
\end{figure}

\end{example}

\begin{example}
    For $n \geq 2$, the intrinsic volume $V_{n-1}$ is $n$-strictly monotone. This is the same as surface area being $n$-strictly monotone (see \cite{https://doi.org/10.1002/mana.19951730106} Lemma 2.4).
\end{example}

\begin{example}\label{ex:V_1}
    The intrinsic volume $V_1$ is $1$-strictly monotone. 
\end{example}
Of course, the sum of a $k$-strictly monotone valuation and a nonnegative, (nominally) monotone valuation will still be $k$-strictly monotone, so these valuations are fairly prevalent. We quickly prove the claim of Example \ref{ex:V_1}, as it will be informative. For this, note the following result.

\begin{prop}[Firey's characterization \cite{firey1976functional}]\label{prop:firey}
    Let $\phi \in \textup{Val}$ be monotone and $1$-homogeneous. Then there exists $G_1, \dots, G_{n-1} \in \mathcal{K}^n$ such that 
    $$\phi(K) = V(K, G_1, \dots, G_{n-1}) = \frac{1}{n} \int_{S^{n-1}} h_K(u) \, dS(G_1, \dots, G_{n-1}; u),$$
    where $V$ is the mixed volume and $dS(G_1, \dots, G_{n-1}; u)$ is the mixed area measure of $G_1, \dots, G_{n-1}$ on $S^{n-1}$.
\end{prop}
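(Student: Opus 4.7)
The plan is to reduce Firey's characterization to the generalized Minkowski existence theorem for mixed area measures, via a representation of $\phi$ by a signed Borel measure on $S^{n-1}$. In outline one needs to show that (i) $\phi$ admits an integral representation of the form $\phi(K) = \int h_K \, d\mu$ for some signed measure $\mu$ on $S^{n-1}$ annihilating restrictions of linear functionals, (ii) monotonicity of $\phi$ forces $\mu \geq 0$, and (iii) every such nonnegative $\mu$ is a mixed area measure.

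For (i), I would first invoke the classical fact that any $1$-homogeneous continuous translation-invariant valuation is automatically Minkowski-additive, i.e. $\phi(K+L) = \phi(K) + \phi(L)$ for all $K, L \in \mathcal{K}^n$ (see \cite{schneider2014}). Through the support-function map $K \mapsto h_K$, this upgrades $\phi$ to an additive, positively-homogeneous, Hausdorff-continuous functional on the cone of support functions in $C(S^{n-1})$; translation invariance says it annihilates restrictions of linear functionals. Since differences of support functions span a dense subspace of $C(S^{n-1})$ (any $C^2$ function can be written as such a difference after adding a large enough multiple of $h_B$ to enforce convexity of the $1$-homogeneous extension), the Riesz-Markov theorem delivers a finite signed Borel measure $\mu$ on $S^{n-1}$, orthogonal to linear functions, with
$$
\phi(K) = \int_{S^{n-1}} h_K(u)\, d\mu(u) \quad \text{for all } K \in \mathcal{K}^n.
$$

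For (ii), Minkowski-additivity gives $\phi(K+H) - \phi(K) = \phi(H)$ for any $K, H \in \mathcal{K}^n$; since $K+H$ contains a translate of $K$, monotonicity of $\phi$ combined with translation invariance forces $\phi(H) \geq 0$ for every $H \in \mathcal{K}^n$, i.e. $\int h_H \, d\mu \geq 0$. Combining this with the density argument from (i) applied to nonnegative test functions $f = h_K - h_L$ with $h_K \geq h_L$ then yields $\mu \geq 0$. Step (iii) is precisely the Aleksandrov-Minkowski existence theorem: any finite nonnegative measure $\mu$ on $S^{n-1}$ with vanishing first moment arises as $\mu = S(G_1, \dots, G_{n-1}; \cdot)$ for some $G_1, \dots, G_{n-1} \in \mathcal{K}^n$ (with the normalizing factor of $n$ absorbed by rescaling one of the $G_i$ via multilinearity of the mixed area measure). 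Substituting into the representation from (i) then produces $\phi(K) = V(K, G_1, \dots, G_{n-1})$.

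The main obstacle will be step (iii): the Aleksandrov-Minkowski existence theorem is notoriously subtle in degenerate cases, specifically when $\mu$ is concentrated on a great subsphere of $S^{n-1}$, which corresponds to the bodies $G_i$ having lower-than-full affine dimension. Handling this requires either approximating $\mu$ by $\mu + \epsilon \sigma$ for $\sigma$ the uniform measure on $S^{n-1}$ and extracting a Blaschke-selection limit of the resulting full-dimensional bodies $G_i^{(\epsilon)}$, or directly appealing to the full form of Aleksandrov's theorem, in which lower-dimensional bodies are permitted as arguments of the mixed area measure.
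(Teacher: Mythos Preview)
The paper does not prove this proposition; it is quoted with a citation to Firey's 1976 paper and used thereafter as a black box. There is therefore no proof in the paper to compare your proposal against.

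That said, your outline follows the standard route, and steps (i) and (ii) are correct. A word of caution on step (iii): what you invoke as the ``Aleksandrov--Minkowski existence theorem'' is, in its classical form, the statement that a centered nonnegative Borel measure on $S^{n-1}$ \emph{not concentrated on a great subsphere} is the area measure $S_{n-1}(G;\cdot)=S(G,\dots,G;\cdot)$ of a single body $G\in\mathcal{K}^n_n$. This already handles the non-degenerate case with $G_1=\cdots=G_{n-1}=G$. In the degenerate case, however, the assertion that $\mu$ is still a \emph{mixed} area measure (now with lower-dimensional $G_i$) is not a corollary of the classical theorem; it is essentially the content of Firey's result itself. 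Your approximation $\mu\mapsto\mu+\epsilon\sigma$ followed by Blaschke selection produces a single body $G^{(\epsilon)}$ for each $\epsilon$, and when $\mu$ is concentrated on a great subsphere these $G^{(\epsilon)}$ need not remain bounded as $\epsilon\to 0$ (for $n\ge 3$, think of $\mu$ supported on an equator: any body whose area measure approximates $\mu$ must be a long, thin approximate cylinder). So the obstacle you flag is real, but the fix you sketch does not close it; a genuine induction on dimension, reducing to Minkowski's theorem in the subspace carrying $\mu$ and tensoring with a segment in the orthogonal direction, is what is actually needed.
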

From this proposition it is clear that knowing whether a $1$-homogeneous, monotone $\phi \in \textup{Val}$ is $1$-strictly monotone is essentially knowing when for fixed $G_1, \dots, G_{n-1} \in \mathcal{K}^n$ we have 
\begin{equation}\label{minkowski monotonicity problem}
K \subseteq L \quad \text{and} \quad V(K, G_1, \dots, G_{n-1}) = V(L, G_1, \dots, G_{n-1}).\end{equation}
This is the Minkowski monotonicity problem, which is open in general (see e.g. \cite{vanhandel2025minkowskismonotonicityproblem}). Nonetheless, useful special cases are known: if $G_1 = B$ and $G_2, \dots, G_{n-1}$ are smooth, (\ref{minkowski monotonicity problem}) holds if and only if $K = L$ (see \cite{HUG2024110622} Theorems 1.2-1.3). Applying \cite{schneider2014} Theorem 5.1.8 shows that this in case
\begin{equation}\label{monotone degree 1 valuation}\phi(K) = V(K, G_1, \dots, G_{n-1})\end{equation}
is $1$-strictly monotone. In $n \geq 2$, $V_1$ is precisely (\ref{monotone degree 1 valuation}) with $B = G_1 = \cdots = G_{n-1}$, so this proves the observation of Example \ref{ex:V_1} in tandem with the claim of Example \ref{ex:V_n}.

It will actually be that \textit{all} of the intrinsic volumes $V_i$ for $i \in \{1, \dots, n\}$ will be $n$-strictly monotone, though this fact requires some more advanced techniques to prove. We note this fact as a lemma.

\begin{lemma}\label{lem:V_i}
    For $i \in \{1, \dots, n\}$, the intrinsic volume $V_i$ is $n$-strictly monotone. 
    \begin{proof}
        We have the Kubota-type formula representation
        \begin{equation}\label{kubota representation}
            V_i(K) = \binom{n}{i} \frac{\kappa_n}{\kappa_i \kappa_{n-i}} \int_{\textup{G}_{n,i}} \textup{Vol}_i(K \vert \xi) \, d\mu_i(\xi) \quad \text{for all } K \in \mathcal{K}^n_n, \quad i \in \{1, \dots, n-1\}.
        \end{equation}
        Here 
        \begin{enumerate}[label=(\roman*)]
            \item $\kappa_i$ is the volume of the closed unit ball in $\mathbb{R}^i$,
            \item $\textup{Vol}_i$ denotes the $i$-dimensional volume,
            \item $G_{n,i}$ is the Grassmannian of all $i$-dimensional subspaces of $\mathbb{R}^n$,
            \item $\mu_i$ is the uniquely determined Haar probability measure on $G_{n,i}$, and 
            \item  $K \vert \xi$ denotes orthogonal projection of $K$ onto $\xi \in G_{n,i}$.
        \end{enumerate}
        Fix $i \in \{1, \dots, n-1\}$ then. Following Lemma 2.3 in \cite{zou2016unifiedtreatmentlpbrunnminkowski}, if $K, L \in \mathcal{K}^n_n$ are such that $K \subsetneq L$, there exists a $\mu_i$-measurable subset $G \subseteq \textup{G}_{n,i}$ such that $\mu_i(G) > 0$ and 
        \begin{equation}\label{lemm 2.9 proof}\textup{Vol}_i(K \vert \xi) < \textup{Vol}_i(L \vert \xi) \quad \text{for all } \xi \in G.\end{equation}
        As projection respects inclusion and $\textup{Vol}_i$ is itself (nominally) monotone, using (\ref{lemm 2.9 proof}) in tandem with (\ref{kubota representation}) will get $V_i(K) < V_i(L)$. We of course also know $V_n(K) < V_n(L)$, as was seen in \ref{ex:V_n}.

        To finish then, we just note that $V_i$ for $i \in \{1, \dots, n\}$ is positive on $\mathcal{K}^n_n$, but this obvious given say \cite{schneider2014} Theorem 5.1.8. It follows then $V_i$ is $n$-strictly monotone. 
    \end{proof}
\end{lemma}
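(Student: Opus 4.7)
The plan is to split on the degree $i$. For $i = n$, $V_n$ agrees with $\textup{Vol}$ and hence is $n$-strictly monotone by Example \ref{ex:V_n}, so nothing remains to check. For $1 \le i \le n-1$, I would reduce matters to $i$-dimensional volume via the Kubota-type integral representation
\[
    V_i(K) \;=\; c_{n,i}\int_{\textup{G}_{n,i}} \textup{Vol}_i(K\vert\xi)\,d\mu_i(\xi),
\]
with $c_{n,i} > 0$. This rewrites the strict monotonicity question on $\mathcal{K}^n_n$ as an integrated strict monotonicity question for the projection functionals $\xi \mapsto \textup{Vol}_i(K\vert\xi)$ on the Grassmannian.

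Assuming $K \subsetneq L$ in $\mathcal{K}^n_n$, orthogonal projection respects inclusion and $\textup{Vol}_i$ is nominally monotone, so $\textup{Vol}_i(K\vert\xi) \le \textup{Vol}_i(L\vert\xi)$ for every $\xi \in \textup{G}_{n,i}$. The heart of the argument is to upgrade this to a strict inequality on a $\mu_i$-positive-measure subset $G \subseteq \textup{G}_{n,i}$; once such a $G$ is in hand, the Kubota formula and ordinary integration immediately yield $V_i(K) < V_i(L)$.

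The main obstacle is producing $G$. My geometric picture: pick an interior point $x$ of $L$ lying in $L \setminus K$, and choose an open ball $U \ni x$ with $U \subseteq L \setminus K$. For any $\xi$ with $U\vert\xi \cap K\vert\xi = \varnothing$ --- equivalently, any $\xi$ whose orthogonal complement misses the open set $U - K$ (which does not contain the origin, since $U \cap K = \varnothing$) --- the projection $U\vert\xi$ contributes strictly positive $i$-volume to $L\vert\xi \setminus K\vert\xi$. Making ``generic'' precise amounts to a Fubini-style computation on the orthogonal-group action on $\textup{G}_{n,i}$ showing the collection of such $\xi$ cannot be Haar-null. Rather than reproduce this, I would invoke Lemma 2.3 of \cite{zou2016unifiedtreatmentlpbrunnminkowski}, which packages exactly this assertion.

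Finally, to confirm the nonnegativity of $V_i$ implicit in our $n$-strict monotonicity convention, I would note that any $K \in \mathcal{K}^n_n$ contains a translate of some $\varepsilon B$ with $\varepsilon > 0$; since $V_i$ is translation-invariant and positive on balls (e.g.\ by Theorem 5.1.8 of \cite{schneider2014}), nominal monotonicity yields $V_i(K) \ge V_i(\varepsilon B) > 0$, completing the verification.
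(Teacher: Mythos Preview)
Your proposal is correct and follows essentially the same route as the paper: split off the case $i=n$ via Example~\ref{ex:V_n}, use the Kubota integral representation for $1\le i\le n-1$, invoke Lemma~2.3 of \cite{zou2016unifiedtreatmentlpbrunnminkowski} to obtain a $\mu_i$-positive-measure set of subspaces on which the projected volumes are strictly ordered, and finish with positivity via \cite{schneider2014} Theorem~5.1.8. The only differences are cosmetic---you supply a geometric heuristic for the cited lemma and spell out the ball-containment argument for positivity---but the logical skeleton is identical.
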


Lemma \ref{lem:V_i} establishes all of the previously mentioned intrinsic volume deviations fit into our framework. We are in a position now to develop the metric-geometric notions we will require.

\subsection{(Semi-)metric geometry}

We consider the problem of adapting some metric-geometric definitions to be applicable to the study of semimetrics. By a \textit{semimetric} we mean a function $\delta: X \times X \rightarrow \mathbb{R}$ for some set $X$ satisfying all the metric axioms except (possibly) the triangle inequality (see Figure \ref{figure:semimetric}).
\begin{figure}[H]
  \centering
  \begin{tikzpicture}[scale=0.75,x=4cm, y=4cm]

    \coordinate (A) at (0,0);
    \coordinate (B) at (1.3,0);
    \coordinate (C) at (0.35,1.05);

    \draw[thick, gray!40] (A) -- (B) -- (C) -- cycle;

    \draw[line width=4pt, gray!60, line cap=round] (A) -- ($(A)!0.45!(C)$);
    \draw[line width=4pt, gray!60, line cap=round] (C) -- ($(C)!0.4!(B)$);

    \draw[line width=4pt, gray!85, line cap=round] (A) -- ($(A)!1.25!(B)$);

    \fill (A) circle (1.4pt);
    \fill (B) circle (1.4pt);
    \fill (C) circle (1.4pt);

  \end{tikzpicture}
  \caption{With respect to a semimetric, one side of a triangle (i.e. the distance between two vertices) may be strictly greater than the sum of the lengths of the other sides.}
  \label{figure:semimetric}
\end{figure}
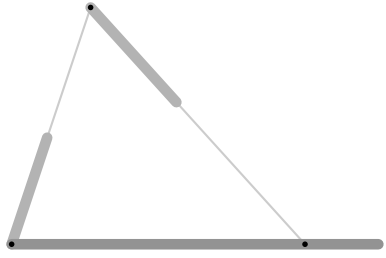
The lack of a triangle inequality creates both topological (see \cite{article}) and geometric issues; the former creates some ambiguity with regards to what a continuous map into $X$ is, and the latter will lose us the properties of length of a rectifiable curve in a metric space we want.

To circumvent these issues then, we appropriate and lightly generalize a construction of Xia \cite{xia2009geodesicproblemquasimetricspaces} for a quasimetric space, a special case of a semimetric space. For this, consider a semimetric space $(X, \delta)$.

\begin{defn}\label{piecewise-metric}
      Let $\gamma : [a,b] \rightarrow X$ a map. We say $\gamma$ is an ($N$-)piecewise metric continuous curve with respect to $\delta$ if there is a partition $\{p_{i}\}_{i = 0}^N$ of $[a,b]$ such that 
    \begin{enumerate}[label=(\roman*)]
        \item $\delta$ is a metric when restricted to each $\gamma([p_{i -1}, p_i])$ for $i \in \{1, \dots, N\}$, and 
        \item $\gamma \vert_{[p_{i - 1}, p_i]}$ is continuous with respect to the metric topology on $\gamma([p_{i-1}, p_i]).$
    \end{enumerate}
\end{defn}

Take such a $\gamma$. As each $\gamma \vert_{[p_{i - 1}, p_i]}$ is a bona fide continuous curve into a metric space, it is reasonable to define its length. In particular, we write 
\begin{equation}\label{length definition 1}
L(\gamma \vert_{[p_{i -1}, p_i]}) := \sup \left \{\sum_{j = 1}^M \delta(\gamma(q_{j-1}), \gamma(q_j)): \{q_j\}_{j =0}^M \text{ is a partition of } [p_{i-1}, p_i]  \right\}\end{equation}
when this supremum exists. If this supremum exists for all $\gamma \vert_{[p_{i -1}, p_i]}$, we call $\gamma$ \textit{($\delta-$)rectifiable} and define its \textit{length}
\begin{equation}
\label{length definition 2}L(\gamma) := \sum_{i = 1}^N L(\gamma \vert_{[p_{i -1}, p_i]}).\end{equation}

$L(\gamma)$ is well-defined as two partitions that present $\gamma$ can be refined to their union, where we can utilize additivity of length in a metric space to show the two lengths must correspond (see Figure \ref{fig:partition-refinement}).

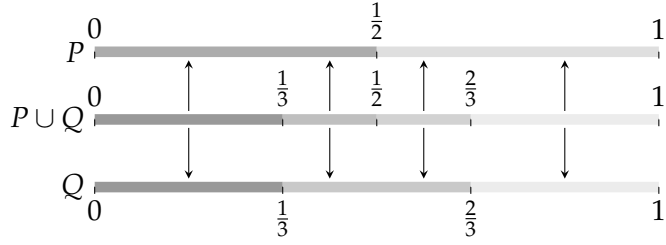
\begin{figure}[H]
\centering
\begin{tikzpicture}[scale=0.75,x=10cm, y=1cm, >=stealth]

\def\ytop{1.2}
\def\ymid{0}
\def\ybot{-1.2}

\draw[thick] (0,\ytop) -- (1,\ytop);
\draw[line width=4pt, gray!65] (0,\ytop) -- (0.5,\ytop);
\draw[line width=4pt, gray!25] (0.5,\ytop) -- (1,\ytop);

\foreach \x/\lab in {0/0, 0.5/{\tfrac12}, 1/1} {
\draw (\x,\ytop) -- (\x,\ytop-0.12);
\node[above] at (\x,\ytop+0.05) {$\lab$};
}
\node[left] at (0,\ytop) {$P$};

\draw[thick] (0,\ymid) -- (1,\ymid);
\draw[line width=4pt, gray!80] (0,\ymid) -- (0.3333,\ymid);
\draw[line width=4pt, gray!50] (0.3333,\ymid) -- (0.5,\ymid);
\draw[line width=4pt, gray!35] (0.5,\ymid) -- (0.6667,\ymid);
\draw[line width=4pt, gray!15] (0.6667,\ymid) -- (1,\ymid);
\foreach \x/\lab in {0/0, 0.3333/{\tfrac13}, 0.5/{\tfrac12}, 0.6667/{\tfrac23}, 1/1} {
\draw (\x,\ymid) -- (\x,\ymid-0.12);
\node[above] at (\x,\ymid+0.05) {$\lab$};
}
\node[left] at (0,\ymid) {$P \cup Q$};

\draw[thick] (0,\ybot) -- (1,\ybot);
\draw[line width=4pt, gray!80] (0,\ybot) -- (0.3333,\ybot);
\draw[line width=4pt, gray!42.5] (0.3333,\ybot) -- (0.6667,\ybot);
\draw[line width=4pt, gray!15] (0.6667,\ybot) -- (1,\ybot);
\foreach \x/\lab in {0/0, 0.3333/{\tfrac13}, 0.6667/{\tfrac23}, 1/1} {
\draw (\x,\ybot) -- (\x,\ybot-0.12);
\node[below] at (\x,\ybot-0.05) {$\lab$};
}
\node[left] at (0,\ybot) {$Q$};

\draw[->] (0.1667,\ymid+0.15) -- (0.1667,\ytop-0.15);
\draw[->] (0.1667,\ymid-0.15) -- (0.1667,\ybot+0.15);

\draw[->] (0.4167,\ymid+0.15) -- (0.4167,\ytop-0.15);
\draw[->] (0.4167,\ymid-0.15) -- (0.4167,\ybot+0.15);

\draw[->] (0.5833,\ymid+0.15) -- (0.5833,\ytop-0.15);
\draw[->] (0.5833,\ymid-0.15) -- (0.5833,\ybot+0.15);

\draw[->] (0.8333,\ymid+0.15) -- (0.8333,\ytop-0.15);
\draw[->] (0.8333,\ymid-0.15) -- (0.8333,\ybot+0.15);
\end{tikzpicture}
\caption{Two partitions $P, Q$ of $[0,1]$ find a mutual refinement in their union. Additivity of length in a metric space then forces the length computed over $P$ and $Q$ to correspond with it computed over $P \cup Q$.}
\label{fig:partition-refinement}
\end{figure}
As this notion of length comes from the sum of metric ``lengths'', it will inherit some useful properties of length in a metric space. Indeed, one checks it will be both invariant under (strictly-increasing, linear) reparametrization and additive under concatenation. 

We can generalize then here the notion of the induced intrinsic metric. 

\begin{defn}\label{def:induced-intrinsic-pseudometric}
    Assume any $x,y \in X$ may be joined by a $\delta-$rectifiable piecewise metric continuous curve. We define the induced intrinsic pseudometric $\ol \delta$ by
    $$\ol \delta(x,y) := \inf\{L(\gamma) : \gamma : [a,b] \rightarrow X \text{ is rectifiable with } \gamma(a) = x, \gamma(b) = y \} \quad \text{for all } x, y \in X.$$
\end{defn}
That $\ol \delta$ is nonnegative and symmetric is readily apparent. We demonstrate then the triangle inequality: let $x, y, z \in X$, and consider rectifiable paths $\gamma_1, \gamma_2$ with the former joining $x$ and $z$ and the latter $y$ and $z$. The concatenation $\gamma_1 \ast \gamma_2$ then joins $x$ and $y$ with
$$L(\gamma_1 \ast \gamma_2) = L(\gamma_1) + L(\gamma_2)$$
given the definition in (\ref{length definition 2}).
We get then 
$$\ol \delta(x,y) \leq L(\gamma_1) + L(\gamma_2),$$
and so taking infima then gets
$$\ol \delta (x,y) \leq \ol \delta(x,z) + \ol \delta(z,y)$$
as desired.

We may wonder if $\ol \delta$ distinguishes points; the answer is in general \textit{no}, as we do not necessarily have the bound $\delta \leq \ol \delta$ given that $\delta$ need not satisfy a triangle inequality (see Figure \ref{figure:intrinsic}). We thus label $\ol \delta$ a \textit{pseudometric}.
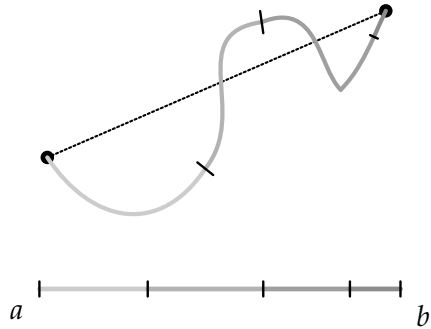
\begin{figure}[H]
\centering
\begin{tikzpicture}[line cap=round, line join=round]

  \def\PathScale{1.5}
  \def\PathShift{-0.33cm}
  \def\PathYShift{0cm}

  \begin{scope}[scale=\PathScale, xshift=\PathShift, yshift=\PathYShift]

    \coordinate (P) at (-1.2,-0.3);
    \coordinate (Q) at (1.8,1.0);
    \fill (P) circle (1.7pt);
    \fill (Q) circle (1.7pt);

    \draw[densely dotted, line width=0.8pt] (P) -- (Q);

    \coordinate (S1) at (0.20,-0.40);
    \coordinate (S2) at (0.70,0.90);
    \coordinate (S3) at (1.40,0.30);

    \coordinate (C1b) at (-0.20,-1.00);
    \coordinate (C2a) at ( 0.60, 0.10);
    \coordinate (C2b) at ( 0.00, 0.80);
    \coordinate (C3a) at ( 1.20, 1.10);
    \coordinate (C3b) at ( 1.20, 0.50);
    \coordinate (C4a) at ( 1.60, 0.50);
    \coordinate (C4b) at ( 1.70, 0.80);

    \def\pw{1.6pt}
    \draw[gray!40, line width=\pw]
      (P)  .. controls (-0.8,-0.9) and (C1b) .. (S1);
    \draw[gray!60, line width=\pw]
      (S1) .. controls (C2a)      and (C2b) .. (S2);
    \draw[gray!75, line width=\pw]
      (S2) .. controls (C3a)      and (C3b) .. (S3);

    \def\t{0.66}
    \coordinate (A1) at ($ (S3)!\t!(C4a) $);
    \coordinate (A2) at ($ (C4a)!\t!(C4b) $);
    \coordinate (A3) at ($ (C4b)!\t!(Q) $);
    \coordinate (B1) at ($ (A1)!\t!(A2) $);
    \coordinate (B2) at ($ (A2)!\t!(A3) $);
    \coordinate (Split) at ($ (B1)!\t!(B2) $);

    \draw[gray!75, line width=\pw]
      (S3)   .. controls (A1) and (B1) .. (Split);
    \draw[gray!90, line width=\pw]
      (Split).. controls (B2) and (A3) .. (Q);

    \def\TL{0.14} 

    \coordinate (Dir1) at (C2a);            
    \coordinate (Dir2) at (C2b);            
    \coordinate (Dir3) at ($ (Split) + (B2) - (B1) $);   

    \draw[black, line width=0.9pt]
      ($ (S1)!\TL!90:(Dir1) $) -- ($ (S1)!-\TL!90:(Dir1) $);
    \draw[black, line width=0.9pt]
      ($ (S2)!\TL!90:(Dir2) $) -- ($ (S2)!-\TL!90:(Dir2) $);
    \draw[black, line width=0.9pt]
      ($ (Split)!\TL!90:(Dir3) $) -- ($ (Split)!-\TL!90:(Dir3) $);

  \end{scope}
  
  \begin{scope}[yshift=-2.2cm]
    \coordinate (A) at (-2.4,0);
    \coordinate (B) at ( 2.4,0);
    \coordinate (T1) at ($ (A)!0.30!(B) $);
    \coordinate (T2) at ($ (A)!0.62!(B) $);
    \coordinate (T3) at ($ (A)!0.86!(B) $);

    \def\lw{1.8pt}
    \draw[gray!40, line width=\lw] (A) -- (T1);
    \draw[gray!60, line width=\lw] (T1) -- (T2);
    \draw[gray!75, line width=\lw] (T2) -- (T3);
    \draw[gray!90, line width=\lw] (T3) -- (B);

    \foreach \X in {A,T1,T2,T3,B}{
      \draw[black, line width=0.9pt]
        ($(\X)+(0,0.10)$) -- ($(\X)+(0,-0.10)$);
    }

    \node[below left=3pt]  at (A) {$a$};
    \node[below right=3pt] at (B) {$b$};
  \end{scope}

\end{tikzpicture}
\caption{The distance between endpoints compared to a rectifiable path in a semimetric space. The length of the path may be strictly smaller than the endpoint distance, i.e. it may provide a ``shortcut''.}
\label{figure:intrinsic}
\end{figure}

We will regardless be interested in cases when $\ol \delta \leq \delta$, which will effectively be our notion of intrinsicness. Expecting say $\delta = \ol \delta$ will be a bad premise, as this will force $\delta$ to be a metric and thus lose us generality we've delicately cultivated.

\section{Primary results}

In this section we prove our primary results. We work on $\mathcal{K}^n_k$ for some $k \in \{1, \dots, n\}$ and take $\Delta_\phi$ to be ($k$-)strictly monotone. To begin, we prove some useful lemmas. 

\begin{prop}
    Let $K, L \in \mathcal{K}^n_k$ with $K \subseteq L$ (resp. $L \subseteq K$). Write 
    $$K_t := (1-t)K + tL \in \mathcal{K}^n_k \quad \text{for } t \in [0,1].$$
    Then for $t, s$ with $0 \leq t \leq s \leq 1$ we have $K_t \subseteq K_s$ (resp. $K_s \subseteq K_t$, see Figure \ref{figure:interpolation}). If we have $K \subsetneq L$ (resp. $L \subsetneq K$), we have $K_t \subsetneq K_s$ for $t < s$ (resp. $K_s \subsetneq K_t)$.
\begin{figure}[H]
\centering
\begin{tikzpicture}[scale=0.75, line cap=round, line join=round]

  \coordinate (K1)  at (-3.0, 0.0);
  \coordinate (K2)  at (-2.2, 1.7);
  \coordinate (K3)  at (-0.7, 2.4);
  \coordinate (K4)  at ( 1.3, 2.0);
  \coordinate (K5)  at ( 2.8, 1.0);
  \coordinate (K6)  at ( 3.0,-0.8);
  \coordinate (K7)  at ( 1.7,-2.2);
  \coordinate (K8)  at ( 0.0,-2.6);
  \coordinate (K9)  at (-1.8,-2.0);
  \coordinate (K10) at (-2.9,-0.9);

  \coordinate (L1)  at (-1.7,-0.2);
  \coordinate (L2)  at (-1.3, 0.7);
  \coordinate (L3)  at (-0.2, 1.2);
  \coordinate (L4)  at ( 0.9, 0.9);
  \coordinate (L5)  at ( 1.5, 0.3);
  \coordinate (L6)  at ( 1.0,-0.7);
  \coordinate (L7)  at ( 0.2,-1.3);
  \coordinate (L8)  at (-0.6,-1.4);
  \coordinate (L9)  at (-1.2,-0.9);
  \coordinate (L10) at (-1.6,-0.5);

  \def\tA{0.00}  
  \def\tB{0.25}
  \def\tC{0.50}
  \def\tD{0.75}
  \def\tE{1.00}  

  \foreach \i in {1,...,10}{
    \coordinate (A\i) at ($ (K\i)!\tA!(L\i) $);
    \coordinate (B\i) at ($ (K\i)!\tB!(L\i) $);
    \coordinate (C\i) at ($ (K\i)!\tC!(L\i) $);
    \coordinate (D\i) at ($ (K\i)!\tD!(L\i) $);
    \coordinate (E\i) at ($ (K\i)!\tE!(L\i) $);
  }

  \def\ten{0.55} 
  \draw[gray!55, line width=1.0pt]
    plot[smooth cycle, tension=\ten] coordinates{(A1)(A2)(A3)(A4)(A5)(A6)(A7)(A8)(A9)(A10)};
  \draw[gray!68, line width=1.0pt]
    plot[smooth cycle, tension=\ten] coordinates{(B1)(B2)(B3)(B4)(B5)(B6)(B7)(B8)(B9)(B10)};
  \draw[gray!80, line width=1.0pt]
    plot[smooth cycle, tension=\ten] coordinates{(C1)(C2)(C3)(C4)(C5)(C6)(C7)(C8)(C9)(C10)};
  \draw[gray!92, line width=1.0pt]
    plot[smooth cycle, tension=\ten] coordinates{(D1)(D2)(D3)(D4)(D5)(D6)(D7)(D8)(D9)(D10)};
  \draw[black,   line width=1.1pt]
    plot[smooth cycle, tension=\ten] coordinates{(E1)(E2)(E3)(E4)(E5)(E6)(E7)(E8)(E9)(E10)};

\end{tikzpicture}
\caption{Convex interpolation between nested $K, L \in \mathcal{K}^n_k$ both stays in $\mathcal{K}^n_k$ and is monotone with respect to set inclusion.}\label{figure:interpolation}
\end{figure}
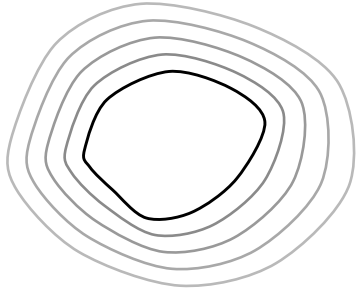

\begin{proof}\label{strict-decreasing}
    Take $K, L \in \mathcal{K}^n_k$ with $K \subseteq L$. Note first that for each $t \in [0,1]$, the affine dimension of $K_t$ is bounded below by the minimum of the affine dimensions of $K$ and $L$, so we have each $K_t \in \mathcal{K}^n_k$.
    
    For $t,s$ with $0 \leq t < s \leq 1$ then note we have
    $$(1-t)h_K + th_L \leq (1-s)h_K + sh_L$$
    as $h_K \leq h_L$ everywhere on $S^{n-1}$, so $K_t \subseteq K_s$.
    
  If we have exactly $K \subsetneq L$, that would be that there exists $u \in S^{n-1}$ with $h_K(u) < h_L(u)$. There 
    $$(1-t)h_K(u) + th_L(u) < (1-s)h_K(u) + sh_L(u),$$
    where combining this with the previous inequality has $K_t \subsetneq K_s$. The desired result for $L \subseteq K$ follows similarly.
\end{proof}

\end{prop}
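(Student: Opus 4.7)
The proof plan is to reduce every claim to a pointwise comparison of support functions on $S^{n-1}$, using the standard identity
$$h_{K_t} = (1-t)h_K + t h_L$$
which holds because $K_t$ is a Minkowski combination.

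First I would verify the affine-dimension bookkeeping: for any fixed $x \in K$, we have $(1-t)x + tL \subseteq K_t$, which is a translate of $tL$ and has affine dimension equal to $\dim L \geq k$ when $t > 0$; the case $t=0$ is trivial since $K_0 = K \in \mathcal{K}^n_k$. So $K_t \in \mathcal{K}^n_k$ throughout the parameter interval.

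Next, assume $K \subseteq L$, which is equivalent to $h_K \leq h_L$ pointwise on $S^{n-1}$. For $0 \leq t \leq s \leq 1$,
$$h_{K_s} - h_{K_t} = (s-t)(h_L - h_K) \geq 0,$$
so $K_t \subseteq K_s$. If the inclusion $K \subsetneq L$ is strict, then $h_K \not\equiv h_L$ (since support functions determine convex bodies), so there is some $u \in S^{n-1}$ with $h_K(u) < h_L(u)$; the display above is then strict at $u$ when $t < s$, forcing $K_t \neq K_s$ and thus $K_t \subsetneq K_s$. The \textit{resp.} case follows by reversing the pointwise inequality between $h_K$ and $h_L$.

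There is really no substantial obstacle here; the statement is essentially a translation of an elementary observation about convex combinations of real-valued functions into the language of support functions. The only mildly subtle point is confirming the affine-dimension lower bound for intermediate $K_t$, which I expect to settle in a single line using the translate-containing-$tL$ argument above.
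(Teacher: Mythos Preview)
Your proposal is correct and follows essentially the same approach as the paper: both arguments reduce to the pointwise comparison $h_{K_s}-h_{K_t}=(s-t)(h_L-h_K)$ on $S^{n-1}$, using $K\subseteq L \iff h_K\le h_L$ for the inclusion and a point $u$ with $h_K(u)<h_L(u)$ for strictness. Your justification of the affine-dimension lower bound via the inclusion $(1-t)x+tL\subseteq K_t$ is slightly more explicit than the paper's one-line assertion, but the overall strategies are the same.
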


This proposition helps provide an important lemma for us. 

\begin{lemma}\label{lem:piecewise-minkowski-is-piecewise-metric}
    Let $K, L \in \mathcal{K}^n_k$ such that $K \subseteq L$ (or $L \subseteq K$). Then $\Delta_\phi$ and $\rho_\phi$ restrict to a metric on the set 
    $$S_{K,L} : = \{K_t : t \in [0,1]\}$$
    where the notation $K_t$ is as in Proposition \ref{strict-decreasing}. In particular, if $\{M_i\}_{i = 0}^N \subseteq \mathcal{K}^n_k$ is a finite sequence of sets with 
    \begin{equation}\label{lemma 3.2 monotone sets}
    M_{i - 1} \subseteq M_i \quad \text{or} \quad M_i \subseteq M_{i -1} \quad \text{for all } i \in \{1, \dots, N\},\end{equation}
    and $\gamma_i : [0,1] \rightarrow \mathcal{K}^n_k$ are paths given by $t \mapsto (1-t)M_{i -1} + tM_i$, then the path $\gamma: [a,b] \rightarrow \mathcal{K}^n_k$ formed via concatenation of the $\gamma_i$ is piecewise metric continuous in the sense of Definition \ref{piecewise-metric}. Its length is 
    $$\sum_{i = 1}^N |\phi(M_{i-1}) - \phi(M_i)|$$
    with respect to both $\Delta_\phi$ and $\rho_\phi$.
    \begin{proof}
        For the sake of brevity we specify to $\Delta_\phi$, though all the following analysis goes through the exact same for $\rho_\phi$.
    
        Let then $K, L \in \mathcal{K}^n_k$ with say $K \subseteq L$. $\Delta_\phi$ is already a semimetric on $\mathcal{K}^n_k$ by Proposition \ref{delta-semimetric}, so we need just show it satisfies a triangle inequality on $S_{K,L}$. By Proposition \ref{strict-decreasing} we have
        $$K_t \subseteq K_s \quad \text{or} \quad K_s \subseteq K_t \quad \text{for all } t,s \in [0,1].$$
        In the former case $\Delta_\phi(K_t, K_s) = \phi(K_s) - \phi(K_t)$, and in the latter $\Delta_\phi(K_t,K_s) = \phi(K_t) - \phi(K_s)$. By monotonicity then we have
        $$\Delta_\phi(K_t, K_s) = |\phi(K_t) - \phi(K_s)| \quad \text{for all } t, s 
        \in [0,1].$$
        The desired triangle inequality thus follows from applying the triangle inequality for the absolute value. Say then we $\{M_i\}_{i = 0}^N$ as in (9), with the associated $\gamma_i$ (see Figure \ref{figure:piecewise-interpolation}).

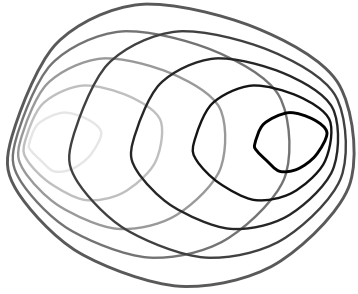
\begin{figure}[H]
\centering
\begin{tikzpicture}[scale=0.75, line cap=round, line join=round]

  \coordinate (K1)  at (-3.0, 0.0);
  \coordinate (K2)  at (-2.2, 1.7);
  \coordinate (K3)  at (-0.7, 2.4);
  \coordinate (K4)  at ( 1.3, 2.0);
  \coordinate (K5)  at ( 2.8, 1.0);
  \coordinate (K6)  at ( 3.0,-0.8);
  \coordinate (K7)  at ( 1.7,-2.2);
  \coordinate (K8)  at ( 0.0,-2.6);
  \coordinate (K9)  at (-1.8,-2.0);
  \coordinate (K10) at (-2.9,-0.9);

  \coordinate (B1)  at (-1.7,-0.2);
  \coordinate (B2)  at (-1.3, 0.7);
  \coordinate (B3)  at (-0.2, 1.2);
  \coordinate (B4)  at ( 0.9, 0.9);
  \coordinate (B5)  at ( 1.5, 0.3);
  \coordinate (B6)  at ( 1.0,-0.7);
  \coordinate (B7)  at ( 0.2,-1.3);
  \coordinate (B8)  at (-0.6,-1.4);
  \coordinate (B9)  at (-1.2,-0.9);
  \coordinate (B10) at (-1.6,-0.5);

  \foreach \i in {1,...,10}{
    \coordinate (S\i) at ($ (0,0)!0.40!(B\i) $);   
    \coordinate (L\i) at ($ (S\i) + (-2.0,0.0) $); 
    \coordinate (R\i) at ($ (S\i) + ( 2.0,0.0) $); 
  }

  \def\tA{0.25}
  \def\tB{0.50}
  \def\tC{0.75}

  \def\uA{0.25}
  \def\uB{0.50}
  \def\uC{0.75}

  \foreach \i in {1,...,10}{
    \coordinate (A\i) at ($ (L\i)!\tA!(K\i) $);
    \coordinate (B\i) at ($ (L\i)!\tB!(K\i) $);
    \coordinate (C\i) at ($ (L\i)!\tC!(K\i) $);
  }

  \foreach \i in {1,...,10}{
    \coordinate (D\i) at ($ (K\i)!\uA!(R\i) $);
    \coordinate (E\i) at ($ (K\i)!\uB!(R\i) $);
    \coordinate (F\i) at ($ (K\i)!\uC!(R\i) $);
  }

  \def\ten{0.55}

  \draw[black!10, line width=0.9pt]
    plot[smooth cycle, tension=\ten]
      coordinates{(L1)(L2)(L3)(L4)(L5)(L6)(L7)(L8)(L9)(L10)};

  \draw[black!25, line width=0.9pt]
    plot[smooth cycle, tension=\ten]
      coordinates{(A1)(A2)(A3)(A4)(A5)(A6)(A7)(A8)(A9)(A10)};
  \draw[black!40, line width=0.9pt]
    plot[smooth cycle, tension=\ten]
      coordinates{(B1)(B2)(B3)(B4)(B5)(B6)(B7)(B8)(B9)(B10)};
  \draw[black!55, line width=0.9pt]
    plot[smooth cycle, tension=\ten]
      coordinates{(C1)(C2)(C3)(C4)(C5)(C6)(C7)(C8)(C9)(C10)};

  \draw[black!65, line width=1.1pt]
    plot[smooth cycle, tension=\ten]
      coordinates{(K1)(K2)(K3)(K4)(K5)(K6)(K7)(K8)(K9)(K10)};

  \draw[black!75, line width=0.9pt]
    plot[smooth cycle, tension=\ten]
      coordinates{(D1)(D2)(D3)(D4)(D5)(D6)(D7)(D8)(D9)(D10)};
  \draw[black!85, line width=0.9pt]
    plot[smooth cycle, tension=\ten]
      coordinates{(E1)(E2)(E3)(E4)(E5)(E6)(E7)(E8)(E9)(E10)};
  \draw[black!92, line width=0.9pt]
    plot[smooth cycle, tension=\ten]
      coordinates{(F1)(F2)(F3)(F4)(F5)(F6)(F7)(F8)(F9)(F10)};

  \draw[black, line width=1.1pt]
    plot[smooth cycle, tension=\ten]
      coordinates{(R1)(R2)(R3)(R4)(R5)(R6)(R7)(R8)(R9)(R10)};

\end{tikzpicture}
\caption{An example in the context of (\ref{lemma 3.2 monotone sets}) with $M_0$ (light gray), $M_1$ (gray) and $M_2$ (black). $M_0, M_2 \subsetneq M_1$. $\gamma_1$ ``grows'' from $M_0$ to $M_1$ and $\gamma_2$ ``shrinks'' from $M_1$ to $M_2$.}
\label{figure:piecewise-interpolation}
\end{figure}
Our above work has that $\Delta_\phi$ restricts to a metric on the image of each $\gamma_i$. To show the concatenation $\gamma$ is piecewise metric continuous then, we need show each $\gamma_i$ is continuous. Fix some $i \in \{1, \dots, N\}$ then; we abuse notation by writing $K = M_{i-1}, L= M_i$ and note 
    \begin{align*}
        d_\mathcal{H}(K_t, K_s) &= \|h_{K_t} - h_{K_s}\|_\infty \\
        &= \|(1-t)h_K + th_L - (1-s)h_K - sh_L\| \\
        &= |s-t|\|h_K - h_L\| _\infty \\
        &= |s-t|d_\mathcal{H}(K, L)
    \end{align*}
    for all $t, s \in [0,1]$, i.e. that $\gamma_i$ is $d_\mathcal{H}(K,L)$-Lipschitz with respect to Hausdorff distance. In particular for fixed $t \in [0,1]$ and sequence $(t_n) \subseteq [0,1]$ with $t_n \rightarrow t$ as $n \rightarrow + \infty$ we have
    $$d_\mathcal{H}(K_{t_n}, K_t) \rightarrow 0,$$
    and so 
    $$\Delta_\phi(K_{t_n}, K_t) = |\phi(K_{t_n}) - \phi(K_t)| \rightarrow 0$$
    as $n \rightarrow +\infty$, given the continuity of $\phi$. It follows every $\gamma_i$ is continuous then, and so the concatenation $\gamma$ of all $\gamma_i$ will be piecewise metric continuous.

    All that remains is to compute the length of $\gamma$, for which it suffices to compute the length of one $\gamma_i$. Using the same notation, say $K \subseteq L$ and consider any partition $\{p_j\}_{j = 0}^M$ of $[0,1]$. Then 
    \begin{align*}
        \sum_{j = 1}^M \Delta_\phi(\gamma(p_{j - 1}), \gamma(p_j)) &= \sum_{j = 1}^M \phi(\gamma(p_{j-1})) + \phi(\gamma(p_j)) - 2(\gamma(p_{j - 1}) \cap \gamma(p_j)) \\
        &= \sum_{j = 1}^M \phi(\gamma(p_j)) - \phi(\gamma(p_{j-1})) &(\text{Proposition  \ref{strict-decreasing}}) \\
        &= \phi(L) - \phi(K)
    \end{align*}
    as the sum telescopes. If instead $L \subseteq K$ this becomes $\phi(K) - \phi(L)$; by monotonicity then, as our partition was the arbitrary, the length of $\gamma_i$ is just 
    $$|\phi(K) - \phi(L)|,$$
    from which it follows the length of $\gamma$ is 
    $$\sum_{i = 1}^N |\phi(M_{i-1}) - \phi(M_i)|,$$
    as desired. 
\end{proof}
\end{lemma}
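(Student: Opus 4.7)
The plan is to exploit the totally ordered (by inclusion) structure of $S_{K,L}$ guaranteed by Proposition \ref{strict-decreasing}: if $K \subseteq L$, then $K_t \subseteq K_s$ whenever $t \le s$. Hence any two elements $A \subseteq B$ of $S_{K,L}$ satisfy $A \cap B = A$, and since $B$ is already convex, $A \tilde\cup B = B$. Direct substitution into the definitions then collapses both deviations to
\[
  \Delta_\phi(A, B) \;=\; \rho_\phi(A, B) \;=\; \phi(B) - \phi(A) \;=\; |\phi(A) - \phi(B)|,
\]
where monotonicity of $\phi$ (derived in the preamble from $k$-strict monotonicity) absorbs the sign. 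So both restrict on $S_{K,L}$ to the pullback via $\phi$ of the Euclidean metric on $\mathbb{R}$; the strict half of Proposition \ref{strict-decreasing} combined with strict monotonicity shows $\phi$ is injective on $S_{K,L}$, so this pullback separates points and is genuinely a metric.

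For the concatenation $\gamma$, the partition inherited from the $\gamma_i$ places each image $\gamma_i([p_{i-1}, p_i]) = S_{M_{i-1}, M_i}$ into the metric case just handled, so only condition (ii) of Definition \ref{piecewise-metric} remains. A support-function computation gives
\[
  d_\mathcal{H}(\gamma_i(t), \gamma_i(s)) \;=\; \|(s-t)(h_{M_i} - h_{M_{i-1}})\|_\infty \;=\; |s - t|\, d_\mathcal{H}(M_{i-1}, M_i),
\]
so $\gamma_i$ is Lipschitz in Hausdorff distance; continuity of $\phi$ with respect to $d_\mathcal{H}$ then transports this to continuity of $\phi \circ \gamma_i$, which by the previous paragraph is exactly continuity of $\gamma_i$ in the induced metric on $\gamma_i([0,1])$ for both $\Delta_\phi$ and $\rho_\phi$.

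Finally, I would compute $L(\gamma)$ one $\gamma_i$ at a time. For any partition $\{q_j\}_{j=0}^M$ of $[0,1]$,
\[
  \sum_{j=1}^M \Delta_\phi\bigl(\gamma_i(q_{j-1}), \gamma_i(q_j)\bigr) \;=\; \sum_{j=1}^M \bigl|\phi(\gamma_i(q_{j-1})) - \phi(\gamma_i(q_j))\bigr|,
\]
and Proposition \ref{strict-decreasing} together with monotonicity of $\phi$ makes $\phi \circ \gamma_i$ monotone in $t$, so all these absolute values carry a consistent sign and the sum telescopes to $|\phi(M_{i-1}) - \phi(M_i)|$. Being independent of the partition, this gives $L(\gamma_i) = |\phi(M_{i-1}) - \phi(M_i)|$, and the additivity of $L$ under concatenation built into (\ref{length definition 2}) sums to the stated total. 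The $\rho_\phi$ case is identical.

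The crux of the argument is the structural observation that nesting inside $S_{K,L}$ forces $\Delta_\phi$ and $\rho_\phi$ to coincide with the pullback metric $|\phi(\cdot) - \phi(\cdot)|$; after that, everything is elementary real analysis, and there is no single hard step. The one bookkeeping point worth being careful about is identifying the metric topology called for by Definition \ref{piecewise-metric}(ii) with the topology in which continuity is actually verified, but since $\phi$ is injective on each $S_{M_{i-1}, M_i}$ both are just the pullback of the Euclidean topology on $\phi(S_{M_{i-1}, M_i}) \subseteq \mathbb{R}$.
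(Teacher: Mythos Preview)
Your proposal is correct and follows essentially the same approach as the paper: reduce both deviations on $S_{K,L}$ to $|\phi(\cdot)-\phi(\cdot)|$ via the nesting from Proposition~\ref{strict-decreasing}, verify continuity of each $\gamma_i$ through the support-function Lipschitz estimate and continuity of $\phi$, then telescope to get the length. Your ``pullback of the Euclidean metric'' framing and the explicit injectivity check are a slightly cleaner packaging of the same content, but the argument is the paper's.
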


\begin{remark}\label{rem3.3}
    Lemma \ref{lem:piecewise-minkowski-is-piecewise-metric} shows that there is always a $\Delta_\phi$-rectifiable path connecting $K, L \in \mathcal{K}^n_k$; indeed, we can interpolate 
    \begin{equation}\label{rectifiable connecting path}K \longrightarrow K \tilde \cup L \longrightarrow L.\end{equation}
    Here the notation $M \rightarrow N$ denotes the map on $[0,1]$ given by $(1-t)M + tN$, and writing say $$M \rightarrow N \rightarrow O$$ denotes the concatenation of the maps $M \rightarrow N$ and $N \rightarrow O$. The reader may notice then the length of the path defined in (\ref{rectifiable connecting path}) will be exactly $\rho_\phi(K,L)$. Thus $\bar \Delta_\phi$ is defined, with $\bar \Delta_\phi \leq \rho_\phi$. 
\end{remark}

It will be useful here now to note a desirable property of the ``pieces'' of a piecewise metric continuous path in $\Delta_\phi$.

\begin{lemma}\label{lem:intersections}
    Let $\gamma : [a,b] \rightarrow \mathcal{K}^n_k$ where 
    \begin{enumerate}[label=(\roman*)]
        \item $\Delta_\phi$ is a metric on $\gamma([a,b])$, and 
        \item $\gamma$ is continuous with respect to the metric topology on $\gamma([a,b])$. 
    \end{enumerate}
    Then there exists a partition $\{p_i\}_{i = 0}^N$ of $[a,b]$ such that 
    $$\gamma(p_{i-1}) \cap \gamma(p_i) \neq \varnothing \quad \text{for all } i \in \{1, \dots, N\}.$$
    \begin{proof}
        For $t \in [a,b]$, write 
        $$\mathcal{N}(\gamma(t)) := \left \{K \in \gamma([a,b]) : \Delta_\phi(K, \gamma(t)) < \frac{1}{2}\phi(\gamma(t)) \right \}.$$
        Of course for each $t \in [a,b]$ we have  $\phi(\gamma(t)) > 0$ by strict monotonicity, so $\gamma(t) \in \mathcal{N}(\gamma(t))$. Each $\mathcal{N}(\gamma(t))$ is thus a nonempty open set. 

        Fix $t \in [a,b]$ then and let $K, L \in \mathcal{N}(\gamma(t))$; we claim that we have $K \cap L  \neq \varnothing$. If $K \cap L = \varnothing$, $\phi(K \cap L) = 0$ and so 
        \begin{equation}\label{lemma 3.4 estimate 1}\phi(K) + \phi(L) = \Delta_\phi(K,L) \leq \Delta_\phi(K, \gamma(t)) + \Delta_\phi(L, \gamma(t)) < \phi(\gamma(t)).\end{equation}
        Remark then we also have the estimate
        \begin{equation}\label{lemma 3.4 estimate 2}|\phi(M) - \phi(N)| < \Delta_\phi(M,N) \quad \text{for all } M, N \in \mathcal{K}^n_k;\end{equation}
        indeed say we have $M, N \in \mathcal{K}^n_k$ and, without loss of generality, $\phi(M) \geq \phi(N).$ Then 
        \begin{align*}|\phi(M) - \phi(N)| &= \phi(M) - \phi(N) \\ &\leq \phi(M) - \phi(N) + 2[\phi(N) - \phi(M \cap N)] \\& = \Delta_\phi(M, N),\end{align*}
        where $2[\phi(N) - \phi(M \cap N)] \geq 0$ as either $M \cap N \neq \varnothing$ allowing us to apply monotonicity, or $M \cap N = \varnothing$ and we get this from just $\phi(N) > 0$. Applying (\ref{lemma 3.4 estimate 2}) then gets
        \begin{align*}\phi(\gamma(t)) - \phi(K) \leq |\phi(\gamma(t)) - \phi(K)| \leq \Delta_\phi(K, \gamma(t))\end{align*}
        and thus 
        \begin{align*}
            \phi(K) - \phi(\gamma(t)) \geq -\Delta_\phi(K, \gamma(t)) > -\frac{1}{2} \phi(\gamma(t))
        \end{align*}
        with the same for $K$ swapped for $L$. This is that $\phi(K), \phi(L) > \frac{1}{2}\phi(\gamma(t))$ though, which contradicts (\ref{lemma 3.4 estimate 1}). It must be then that $K \cap L \neq \varnothing$, as desired. Write then 
        $$U_t := \gamma^{-1}(\mathcal{N}(\gamma(t)) \quad \text{for all } t \in [a,b].$$
        Each $U_t$ is open in $[a,b]$ as $\gamma$ is continuous, as is each nonempty given $t \in U_t$. In particular, this has that the $U_t$ provide an open cover of $[a,b]$.

        We may apply Lebesgue's number lemma then: there is some $\delta > 0$ such that for all subintervals $[c,d] \subseteq [a,b]$, 
        $$d - c< \delta \quad \Longrightarrow \quad [c,d] \in U_t \quad \text{for some } t \in [a,b].$$
        Choose then a partition $\{p_i\}_{i = 0}^N$ of $[a,b]$ with gap (i.e. the maximum of distances between successive partition points) less than $\delta$. Then for each $i \in \{1, \dots, N\}$ there is a $t \in [a,b]$ where 
        $$[p_{i - 1}, p_i] \subseteq U_t,$$
        which is exactly that 
        $$\gamma([p_{i - 1}, p_i]) \subseteq N(\gamma(t)).$$
        By our earlier work then, this clear has that $\gamma(p_{i - 1}) \cap \gamma(p_i) \neq \varnothing$, as desired.
    \end{proof}
\end{lemma}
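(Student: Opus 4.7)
The plan is to cover $[a,b]$ by preimages of small $\Delta_\phi$-balls around each $\gamma(t)$, designed so that any two bodies in the same ball must meet, and then extract a partition via Lebesgue's number lemma. The specific ball I would use for each $t \in [a,b]$ is
$$
  \mathcal{N}(\gamma(t)) := \left\{K \in \gamma([a,b]) : \Delta_\phi(K,\gamma(t)) < \tfrac{1}{2}\phi(\gamma(t))\right\},
$$
which is nonempty (it contains $\gamma(t)$, and $\phi(\gamma(t)) > 0$ by strict monotonicity) and open in $\gamma([a,b])$ with respect to the $\Delta_\phi$-metric.

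The core step is to prove that any $K, L \in \mathcal{N}(\gamma(t))$ satisfy $K \cap L \neq \varnothing$. I would argue by contradiction: if $K \cap L = \varnothing$, then $\phi(K \cap L) = 0$, so $\Delta_\phi(K,L) = \phi(K) + \phi(L)$, and the triangle inequality on $\gamma([a,b])$ (available by hypothesis) gives
$$
  \phi(K) + \phi(L) = \Delta_\phi(K,L) \le \Delta_\phi(K,\gamma(t)) + \Delta_\phi(L,\gamma(t)) < \phi(\gamma(t)).
$$
On the other hand, I expect the auxiliary bound $|\phi(M) - \phi(N)| \le \Delta_\phi(M,N)$ to hold on $\mathcal{K}^n_k$: assuming $\phi(M) \ge \phi(N)$, either $M \cap N \in \mathcal{K}^n_k$ and monotonicity gives $\phi(N) \ge \phi(M \cap N)$, or $M \cap N = \varnothing$ and the nonnegativity of $\phi$ still yields $\phi(N) \ge 0 = \phi(M \cap N)$, so $2[\phi(N) - \phi(M \cap N)] \ge 0$ and the claim follows from the definition of $\Delta_\phi$. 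Applying this to $(K,\gamma(t))$ and $(L,\gamma(t))$ forces $\phi(K), \phi(L) > \tfrac{1}{2}\phi(\gamma(t))$, contradicting the displayed inequality.

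Once this neighborhood property is in hand, I would pull back to $[a,b]$: set $U_t := \gamma^{-1}(\mathcal{N}(\gamma(t)))$, which is open by the continuity hypothesis (ii) and contains $t$, so $\{U_t\}_{t \in [a,b]}$ is an open cover of the compact interval $[a,b]$. Invoking Lebesgue's number lemma yields $\delta > 0$ such that every subinterval of length $< \delta$ lies inside some $U_t$; choosing any partition $\{p_i\}_{i=0}^N$ of $[a,b]$ with mesh $< \delta$, each pair $\gamma(p_{i-1}), \gamma(p_i)$ lies in a common $\mathcal{N}(\gamma(t))$, and the core step then delivers $\gamma(p_{i-1}) \cap \gamma(p_i) \neq \varnothing$, as required.

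The main obstacle is the auxiliary inequality $|\phi(M) - \phi(N)| \le \Delta_\phi(M,N)$ and the careful handling of the case $M \cap N = \varnothing$, since then $\phi(M \cap N) = 0$ is not controlled by monotonicity directly; one has to use that strict monotonicity already implies $\phi \ge 0$ on $\mathcal{K}^n_k$, a fact established in Section 2.
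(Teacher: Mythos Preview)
Your proposal is correct and essentially identical to the paper's proof: same neighborhoods $\mathcal{N}(\gamma(t))$ with radius $\tfrac{1}{2}\phi(\gamma(t))$, same auxiliary estimate $|\phi(M)-\phi(N)|\le\Delta_\phi(M,N)$, and the same Lebesgue-number finish. The only harmless slip is your case split for the auxiliary bound: the dichotomy should be $M\cap N\neq\varnothing$ versus $M\cap N=\varnothing$ (as in the paper) rather than $M\cap N\in\mathcal{K}^n_k$ versus empty, since a nonempty intersection of affine dimension $<k$ is still handled by ordinary monotonicity on all of $\mathcal{K}^n$.
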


The last ingredient we will require will be a useful geometric property of certain valuations $\phi \in \textup{Val}$.

\begin{lemma}\label{lem:spheropolyhedral}
    Let $\phi \in \textup{Val}$ be $k$-strictly monotone and write its McMullen decomposition
    $$\phi = \sum_{i = 1}^n \phi_i.$$
    If $\phi_1 \equiv 0,$ then $\phi_i$ admits ``spheropolyhedrons'' of prescribed length with arbitrary little value, i.e. for any $p, q \in \mathbb{R}^n$ and $\epsilon > 0$ there is $r > 0$ such that 
    $$\phi([p,q] + rB) < \epsilon,$$
    where $[p, q] := \textup{conv}(\{p,q\}).$
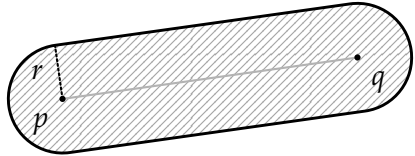
\begin{figure}[H]
\centering
\begin{tikzpicture}[scale=0.9, line cap=round, line join=round]

\def\r{0.8cm}
\def\hL{2.2cm}

\begin{scope}[rotate=8]

\coordinate (A) at (-\hL,0);
\coordinate (B) at ( \hL,0);

\path[pattern=north east lines, pattern color=gray!70]
  ([shift={(0,\r)}]A)
  arc[start angle=90,end angle=270,radius=\r]
  -- ([shift={(0,-\r)}]B)
  arc[start angle=-90,end angle=90,radius=\r]
  -- cycle;

\draw[line width=1.1pt]
  ([shift={(0,\r)}]A)
  arc[start angle=90,end angle=270,radius=\r]
  -- ([shift={(0,-\r)}]B)
  arc[start angle=-90,end angle=90,radius=\r]
  -- cycle;

\draw[gray!60, line width=0.8pt, line cap=butt]
  (A) -- (B);

\fill (A) circle (1.3pt);
\fill (B) circle (1.3pt);

\draw[densely dotted, line width=0.8pt]
  (A) -- ([shift={(0,\r)}]A);

\node[below left=2pt] at (A) {$p$};
\node[below right=2pt] at (B) {$q$};
\node[left=2pt] at ($(A)!0.5!([shift={(0,\r)}]A)$) {$r$};

\end{scope}
\end{tikzpicture}
\caption{A $2$-dimensional spheropolyhedron, i.e. the Minkowski sum of a line segment and a dilate of the closed unit 2-ball $B$.}
\label{figure:spheropolyhedron}
\end{figure}

\begin{proof}
    First note that strictly monotone valuations are finitely subadditive in the following sense: if $\{K_i\}_{i = 1}^N \subseteq \mathcal{K}^n_k$ is such that 
    $$\bigcup_{i = 1}^j K_i \in \mathcal{K}^n_k \quad \text{for all } j \in \{1, \dots, N\},$$
    then we have 
    \begin{equation}\label{subadditivity}\phi \left (\bigcup_{i = 1}^N K_i \right) \leq \sum_{i =1}^N \phi(K_i).\end{equation}
    The claim follows straightforwardly from induction; the base case is trivial, and assuming it holds for $N-1$ we see 
    \begin{align*}
        \phi \left (\bigcup_{i =1}^N K_i \right) &= \phi \left (K_N \cup \bigcup_{i = 1}^{N-1} K_i \right) \\
        &= \phi(K_N) + \phi \left (\bigcup_{i = 1}^{N-1} K_i \right) - \phi \left (K_n \cap \bigcup_{i = 1}^{N-1} K_i \right) \\
        &\leq \sum_{i = 1}^N \phi(K_i),
    \end{align*}
    as strict monotonicity has $\phi \geq 0$.

    Let then $p, q \in \mathbb{R}^n$ and write
    $$H := [p,q] + B.$$
    Take $\delta \in (0,1]$. As $\phi_1 \equiv 0$ we have 
    \begin{align*}
        \phi(\delta H) = \sum_{i = 2}^n \phi_i(\delta H) 
        = \sum_{i =2}^n \delta^i \phi_i(H) 
        \leq \delta^2 \sum_{i = 2}^n \phi_i(H)  
        = \delta^2\phi(H).
    \end{align*}
    Write $C = \sum_{i = 2}^n \phi_i(H)$ then. Consider 
    $$H_j := \bigcup_{i = 0}^{j-1} \left ( \left [ p + \frac{i(q -p)}{j}, p + \frac{(i + 1)(q-p)}{j} \right ] + \frac{1}{j} B\right ) = [p,q] + \frac{1}{j}B.$$
    Write $I_i^j$ for each term in the union; we note each $I_i^j$ is merely a translate of $H$ dilated by $\frac{1}{j}.$ By translation-invariance then 
    $$\phi(I_i^j) = \phi \left (\frac{1}{j}H \right) \leq \frac{1}{j^2}\phi(H).$$
    Applying our established subadditivity then we have 
    \begin{align*}
        \phi(H_j) = \phi \left (\bigcup_{i =0}^{j-1} I_i^j \right) 
        \leq \sum_{i = 0}^{j - 1}\phi(I_i^j) 
        \leq \frac{1}{j^2} \sum_{i = 0} ^{j-1}  \phi(H) 
        = \frac{1}{j}\phi(H),
    \end{align*}
    where taking $j \rightarrow +\infty$ gets the desired result.
\end{proof}

\end{lemma}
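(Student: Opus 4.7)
The plan is to scale down the "tube" $[p,q] + rB$ by taking $r = 1/j$ for large $j$ and then exploit two things: first, that the tube decomposes into $j$ translates of a $1/j$-scaling of the base tube $H := [p,q] + B$; second, that because $\phi_1 \equiv 0$, homogeneity gives an $i$-fold contraction of $\phi_i$ under $1/j$-scaling, the slowest of which is $1/j^2$. Since there are only $j$ pieces, finitely covered by pieces of size $1/j^2$, the whole thing shrinks like $1/j$.

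First I would establish a finite subadditivity statement: whenever $K_1,\dots,K_N \in \mathcal{K}^n_k$ are such that every partial union $\bigcup_{i \le j} K_i$ lies in the convex ring, one has $\phi\bigl(\bigcup_i K_i\bigr) \le \sum_i \phi(K_i)$. This is a straightforward induction using the inclusion-exclusion identity $\phi(A \cup B) = \phi(A) + \phi(B) - \phi(A \cap B)$ (valid via Groemer's extension) combined with $\phi \ge 0$ on $\mathcal{K}^n_k$, which we get from $k$-strict monotonicity. This is the most "infrastructure-heavy" step but is essentially routine.

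Next, partition $[p,q]$ into $j$ equal subsegments with endpoints $p_i := p + \tfrac{i}{j}(q-p)$, and let
\[
  I_i^j := [p_i, p_{i+1}] + \tfrac{1}{j}B, \qquad i = 0, \dots, j-1.
\]
Each $I_i^j$ is convex, and $[p,q] + \tfrac{1}{j}B = \bigcup_{i=0}^{j-1} I_i^j$. More importantly, $I_i^j$ is a translate of $\tfrac{1}{j}H$ where $H = [p,q] + B$, since $[p_i,p_{i+1}] = \tfrac{1}{j}[p,q]$ up to a translation.

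Now apply translation-invariance and the McMullen decomposition. Using $\phi_1 \equiv 0$ and $i$-homogeneity,
\[
  \phi(I_i^j) = \phi\bigl(\tfrac{1}{j}H\bigr) = \sum_{i=2}^n \tfrac{1}{j^i}\phi_i(H) \le \tfrac{1}{j^2}\sum_{i=2}^n \phi_i(H) = \tfrac{1}{j^2}\phi(H),
\]
where the inequality uses $j \ge 1$ together with nonnegativity of each $\phi_i$ on $\mathcal{K}^n_n$ (which follows from monotonicity of each McMullen component). Finally, by the subadditivity established in the first step,
\[
  \phi\bigl([p,q] + \tfrac{1}{j}B\bigr) \le \sum_{i=0}^{j-1}\phi(I_i^j) \le j \cdot \tfrac{1}{j^2}\phi(H) = \tfrac{1}{j}\phi(H),
\]
which can be made smaller than $\epsilon$ by choosing $j$ large enough, giving $r = 1/j$.

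The only real subtlety is checking nonnegativity of the $\phi_i$ individually so that dropping the $i \ge 3$ terms gives an honest upper bound by $1/j^2$; this is where the fact (cited earlier in the paper from Bernig) that monotonicity passes to each McMullen component is used implicitly. Everything else is bookkeeping with the decomposition.
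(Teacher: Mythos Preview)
Your proposal is correct and follows essentially the same argument as the paper: establish finite subadditivity from the valuation identity and $\phi\ge 0$, decompose $[p,q]+\tfrac{1}{j}B$ into $j$ translates of $\tfrac{1}{j}H$ with $H=[p,q]+B$, use $\phi_1\equiv 0$ and homogeneity to bound each piece by $\tfrac{1}{j^2}\phi(H)$, and sum. You even make explicit the one point the paper leaves implicit, namely that the inequality $\sum_{i\ge 2}\delta^i\phi_i(H)\le \delta^2\sum_{i\ge 2}\phi_i(H)$ requires nonnegativity of each $\phi_i(H)$, which follows from the cited fact that monotonicity of $\phi$ descends to its McMullen components.
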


We are in a position now to prove our main theorems. 

\begin{theorem}[Restatement of Theorem \ref{thm:1}]
Let $\phi\in\Val$ be $k$-strictly monotone and write its McMullen decomposition
$$
  \phi = \sum_{i=1}^n \phi_i,
$$
with each $\phi_i \in \textup{Val}$ $i$-homogeneous. Then
$$
  \ol \Delta_\phi(K,L) \le \Delta_\phi(K,L)
  \quad \text{for all } K,L \in \K^n_k
$$
if and only if $\phi$ has no $1$-homogeneous component, i.e. $\phi_1 \equiv 0.$
\begin{proof}
    Throughout this proof, we will participate in some slight abuses of notation; we will reuse the indeces $i$ and $k$, but context should make it clear enough how we are using them.

    We first establish the forward direction by means of the contrapositive, i.e. we assume $\phi_1 \not \equiv 0$ and demonstrate there is some $K, L \in \mathcal{K}^n_k$ with 
    $$\ol \Delta_\phi(K,L) > \Delta_\phi(K,L).$$
    Recall that by \cite{bernig2010hermitianintegralgeometry} Theorem 2.12, $\phi_1$ is itself monotone. By Firey's characterization (Proposition \ref{prop:firey}) there are $G_1, \dots, G_{n-1} \in \mathcal{K}^n$ such that 
    $$\phi_1(K) = \frac{1}{n}\int_{S^{n-1}} h_K(u) \, dS(G_1, \dots, G_{n-1}; u) \quad \text{for all } K \in \mathcal{K}^n.$$
    The assumption $\phi_1 \not \equiv 0$ implies that the support of the mixed area measure $S(G_1, \dots, G_{n-1}; \cdot)$ is nonempty, so pick $v \in \supp S(G_1, \dots, G_{n-1}; \cdot)$.

    Recall $B \in \mathcal{K}^n_k$ denotes the closed unit ball. For $t \geq 0$, let $\gamma : [a,b] \rightarrow \mathcal{K}^n_k$ be any $\Delta_\phi$-rectifiable path joining $B$ and the translate $B + t\{v\}$. Remember 
    $$L(\gamma) = \sum_{i = 1}^N L(\gamma \vert_{[p_{i-1}, p_i]})$$
    where $\{p_i\}_{i = 0}^N$ is a partition of $[a,b]$ such that 
    \begin{enumerate}[label=(\roman*)]
    \item the restriction of $\Delta_\phi$ to the image of each $\gamma \vert_{[p_{i-1}, p_i]}$ is a metric, and
    \item each $\gamma \vert_{[p_{i -1}, p_i]}$ is continuous (with respect to the metric topology on image of $\gamma \vert_{[p_{i -1}, p_i]}$).
    \end{enumerate}
    We can thus apply Lemma \ref{lem:intersections} to each $\gamma \vert_{[p_{i - 1}, p_i]}$ to receive a partition $\{q^i_j\}_{j = 0}^{M_i}$ of $[p_{i - 1}, p_i]$ where 
    $$\gamma(q_{j-1}^i) \cap \gamma(q_j^i) \quad \text{for all } j \in \{1, \dots, M_i\}.$$
    We get then 
    \begin{align*}
        L(\gamma) &= \sum_{i = 1}^N L(\gamma \vert_{[p_{i-1}, p_i]}) \\ 
        &\geq \sum_{i =1}^N \sum_{j = 1}^{M_i} \Delta_\phi(\gamma(q^i_{j-1}), \gamma(q^i_j)).
    \end{align*}
    As $q^i_0 = q^{i-1}_{M_{i-1}}$, the union $\bigcup_{i = 1}^N \{q_i\}_{j = 0}^{M_i}$ is a partition of $[a,b]$, written $\{r_k\}_{k = 0}^K$, where $\gamma(r_{k-1}) \cap \gamma(r_k) \neq \varnothing$ for each $k \in \{1, \cdots, K\}$. We can rewrite the above inequality then as 
    \begin{equation}\label{thm 3.6 estimate}L(\gamma) \geq \sum_{k = 1}^K \Delta_\phi(\gamma(r_{k-1}), \gamma(r_k)).\end{equation}
    Again, as the $\phi_i$ are themselves monotone, 
    $$\phi_i(\gamma(r_{k-1})) + \phi_i(\gamma(r_k)) - 2\phi_i(\gamma(r_{k-1}) \cap \gamma(r_k)) \geq 0$$
    for each $i \in \{1, \dots, n\}$ and $k \in \{1, \dots, K\}$. It follows 
    \begin{align*}
        \Delta_\phi(\gamma(r_{k-1}), \gamma(r_k)) &= \sum_{i = 1}^n \phi_i(\gamma(r_{k-1}) ) + \phi_i(\gamma(r_k)) - 2\phi_i(\gamma(r_{k-1}) \cap \gamma(r_k)) \\
        &\geq \phi_1(\gamma(r_{k-1})) + \phi_1(\gamma(r_k)) - 2\phi_1(\gamma(r_{k-1}) \cap \gamma(r_k)).
    \end{align*}
    We get then, combining this inequality with (\ref{thm 3.6 estimate}) and applying Firey's characterization, that
    \begin{align*}
        L(\gamma) &\geq \sum_{k = 1}^K \Delta_\phi(\gamma(r_{k-1}), \gamma(r_k)) \\ 
        &\geq \sum_{k = 1}^K \phi_1(\gamma(r_{k-1})) +\phi_1(\gamma(r_k)) - 2\phi_1(\gamma(r_{k-1}) \cap \gamma(r_k)) \\
        &= \frac{1}{n}\sum_{k  = 1}^K \left [ \int_{S^{n-1}} h_{\gamma(r_{k-1})}(u) + h_{\gamma(r_{k})}(u) - h_{\gamma(r_{k-1}) \cap \gamma(r_k)}(u) \, dS(G_1, \dots, G_{n-1}; u) \right ] \\
        &\geq \frac{1}{n}\sum_{k = 1}^K \left [\int_{S^{n-1}} h_{\gamma(r_{k-1})}(u) + h_{\gamma(r_{k})}(u) - 2\min\{h_{\gamma(r_{k-1})}(u), h_{\gamma(r_k)}(u)\} \, dS(G_1, \dots, G_{n-1}; u)\right] \\ 
        &= \frac{1}{n}\int_{S^{n-1}} \left [\sum_{k = 1}^K |h_{\gamma(r_{k-1})}(u) - h_{\gamma(r_{k})}(u)| \right] \, dS(G_1, \dots, G_{n-1}; u) \\
        &\geq \frac{1}{n}\int_{S^{n-1}} |h_{B}(u) - h_{B + t\{v\}}(u)| \, dS(G_1, \dots, G_{n-1}; u) \\
        &= \frac{t}{n} \int_{S^{n-1}} |\langle u, v \rangle| dS(G_1, \dots, G_{n-1}; u).
    \end{align*}
    As $|\langle u, v \rangle |$ is positive in a neighborhood of $v$ and $v \in \supp S(G_1, \dots, G_{n-1}; \cdot)$ we have  
    $$\int_{S^{n-1}} |\langle u, v \rangle| \, dS(G_1, \dots, G_{n-1}; u) > 0.$$
    Pick $t$ large enough then such that for some small $\epsilon > 0$
    \begin{enumerate}[label=(\roman*)]
        \item $B$ and $B + t\{v\}$ are disjoint, and 
        \item $\frac{t}{n} \int_{S^{n-1}} |\langle u, v \rangle| dS(G_1, \dots, G_{n-1}; u) > 2\phi(B) + \epsilon$.
    \end{enumerate}
    Then we have 
    \begin{align*}
        L(\gamma) &\geq \frac{t}{n} \int_{S^{n-1}} |\langle u, v \rangle| dS(G_1, \dots, G_{n-1}; u) \\
        &= 2\phi(B) + \epsilon \\
        &= \Delta_\phi(B, B + t\{v\}) + \epsilon,
    \end{align*}
    for any $\Delta_\phi$-rectifiable $\gamma$ joining $B$ and $B + t\{v\}$. Taking an infimum gets 
    $$\ol \Delta_\phi(B, B + t\{v\}) \geq \Delta_\phi(B, B + t\{v\}) + \epsilon,$$
    which has 
    $$\ol \Delta_\phi(B, B + t\{v\}) > \Delta_\phi(B, B + t\{v\}),$$
    as desired.

    We turn now to the backwards direction, i.e. we assume $\phi_1 \equiv 0$ and move to show 
    $$\ol \Delta_\phi(K,L) \leq \Delta_\phi(K,L) \quad \text{for all } K ,L \in \mathcal{K}^n_k.$$
    For this, take $K, L \in \mathcal{K}^n_k$ and let $\epsilon > 0$; we demonstrate the above by producing a $\Delta_\phi$-rectifiable curve $\gamma$ such that 
    $$L(\gamma) \leq \Delta(K,L) + \epsilon.$$
    We proceed by cases on $K \cap L$. 
    \begin{enumerate}[label=(\roman*)]
        \item \textit{($K \cap L \in \mathcal{K}^n_k)$} Following the notation of Remark \ref{rem3.3}, we can interpolate 
        $$K \longrightarrow K \cap L \longrightarrow L;$$
        as $K \cap L \in \mathcal{K}^n_k$, this path is entirely within $\mathcal{K}^n_k$ (see Figure \ref{fig:case-KcapL-in-Knk}). By Lemma \ref{lem:piecewise-minkowski-is-piecewise-metric}, this path is $\Delta_\phi$-rectifiable and has length 
        $$\phi(K) - \phi(K \cap L) + [\phi(L) - \phi(K \cap L)] = \Delta_\phi(K,L).$$

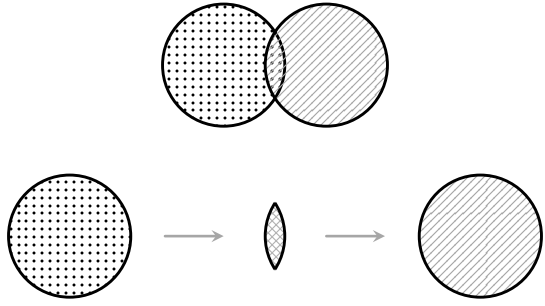
\begin{figure}[H]
\centering
\begin{tikzpicture}[scale=0.65, line cap=round, line join=round]

\def\R{1.25}
\def\d{2.10}
\def\panel{4.2}
\def\rowsep{3.5}
\def\ahalf{0.6}
\def\ashift{0.45}

\begin{scope}[xshift=\panel cm, yshift=\rowsep cm]
  \coordinate (cK) at (-\d/2,0);
  \coordinate (cL) at ( \d/2,0);

  \begin{scope}
    \clip (cK) circle (\R);
    \fill[pattern=dots, pattern color=black] (-3,-3) rectangle (3,3);
  \end{scope}
  \draw[line width=1.1pt] (cK) circle (\R);

  \begin{scope}
    \clip (cL) circle (\R);
    \fill[pattern=north east lines, pattern color=gray!70] (-3,-3) rectangle (3,3);
  \end{scope}
  \draw[line width=1.1pt] (cL) circle (\R);
\end{scope}

\begin{scope}[xshift=0*\panel cm]
  \coordinate (c1) at (0,0);
  \begin{scope}
    \clip (c1) circle (\R);
    \fill[pattern=dots, pattern color=black] (-3,-3) rectangle (3,3);
  \end{scope}
  \draw[line width=1.1pt] (c1) circle (\R);
\end{scope}

\begin{scope}[xshift=1*\panel cm]
  \coordinate (cK) at (-\d/2,0);
  \coordinate (cL) at ( \d/2,0);

  \begin{scope}
    \clip (cK) circle (\R);
    \clip (cL) circle (\R);
    \fill[pattern=crosshatch, pattern color=gray!60] (-3,-3) rectangle (3,3);
  \end{scope}

  \begin{scope}
    \clip (cK) circle (\R);
    \draw[line width=1.1pt] (cL) circle (\R);
  \end{scope}
  \begin{scope}
    \clip (cL) circle (\R);
    \draw[line width=1.1pt] (cK) circle (\R);
  \end{scope}
\end{scope}

\begin{scope}[xshift=2*\panel cm]
  \coordinate (c3) at (0,0);
  \begin{scope}
    \clip (c3) circle (\R);
    \fill[pattern=north east lines, pattern color=gray!70] (-3,-3) rectangle (3,3);
  \end{scope}
  \draw[line width=1.1pt] (c3) circle (\R);
\end{scope}

\pgfmathsetmacro{\xcLeft}{0.5*\panel + \ashift}
\pgfmathsetmacro{\xcRight}{1.5*\panel - \ashift}

\draw[->, >=stealth, gray!70, line width=1.0pt]
  (\xcLeft-\ahalf,0) -- (\xcLeft+\ahalf,0);
\draw[->, >=stealth, gray!70, line width=1.0pt]
  (\xcRight-\ahalf,0) -- (\xcRight+\ahalf,0);

\end{tikzpicture}
\caption{An example of interpolation in case (i): two discs in $\mathcal{K}^2_2$ have their intersection still in $\mathcal{K}^2_2$.}
\label{fig:case-KcapL-in-Knk}
\end{figure}
    \item \textit{($K \cap L \notin \mathcal{K}^n_k, \quad \phi(K \cap L) > 0$)} Akin to the strategy employed in the proof of Proposition \ref{delta-semimetric}, we ``thicken'' things, i.e. for $\delta > 0$ we consider the interpolation 
    \begin{equation}\label{thm 3.6 interpolation}K \rightarrow K \cap ((K \cap L) + \delta B) \rightarrow (K \cap L) + \delta B \rightarrow L \cap ((K \cap L) + \delta B) \rightarrow L\end{equation}
    which stays inside $\mathcal{K}^n_k$ (see Figure \ref{fig:case-ii-panels}). Again, Lemma \ref{lem:piecewise-minkowski-is-piecewise-metric} has this path is $\Delta_\phi$-rectifiable with length 
    \begin{equation}\label{theorem 3.6 case ii length}\phi(K) + \phi(L) + 2[\phi((K \cap L) + \delta B) - \phi(K \cap ((K \cap L) + \delta B)) - \phi(L \cap ((K \cap L) + \delta B))].\end{equation}

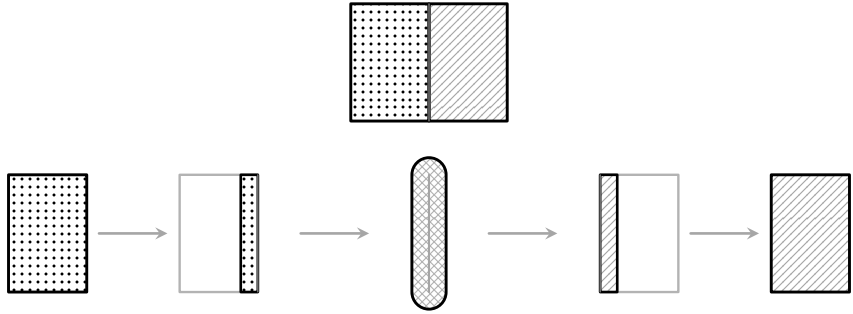
\begin{figure}[H]
\centering
\begin{tikzpicture}[scale=0.65, line cap=round, line join=round]

  \def\w{1.6}
  \def\h{1.2}
  \def\del{0.35}
  \def\panel{3.5}
  \def\rowsep{3.5}
  \def\ahalf{0.7}

  \begin{scope}[xshift=2*\panel cm, yshift=\rowsep cm]
    \fill[pattern=dots, pattern color=black]
      (-\w,-\h) rectangle (0,\h);
    \draw[line width=1.1pt]
      (-\w,-\h) rectangle (0,\h);

    \fill[pattern=north east lines, pattern color=gray!70]
      (0,-\h) rectangle (\w,\h);
    \draw[line width=1.1pt]
      (0,-\h) rectangle (\w,\h);

    \draw[gray!70] (0,-\h) -- (0,\h);
  \end{scope}

  \begin{scope}[xshift=0*\panel cm]
    \fill[pattern=dots, pattern color=black]
      (-\w,-\h) rectangle (0,\h);
    \draw[line width=1.1pt]
      (-\w,-\h) rectangle (0,\h);
  \end{scope}

  \begin{scope}[xshift=1*\panel cm]
    \draw[gray!60, line width=0.9pt]
      (-\w,-\h) rectangle (0,\h);
    \fill[pattern=dots, pattern color=black]
      (-\del,-\h) rectangle (0,\h);
    \draw[line width=1.0pt]
      (-\del,-\h) rectangle (0,\h);
    \draw[gray!70] (0,-\h) -- (0,\h);
  \end{scope}

  \begin{scope}[xshift=2*\panel cm]
    \begin{scope}[rotate=90]
      \coordinate (A) at (-\h,0);
      \coordinate (B) at ( \h,0);

      \path[fill, pattern=crosshatch, pattern color=gray!60]
        ([shift={(0,\del)}]A)
          arc[start angle=90,end angle=270,radius=\del]
        -- ([shift={(0,-\del)}]B)
          arc[start angle=-90,end angle=90,radius=\del]
        -- cycle;

      \draw[line width=1.1pt]
        ([shift={(0,\del)}]A)
          arc[start angle=90,end angle=270,radius=\del]
        -- ([shift={(0,-\del)}]B)
          arc[start angle=-90,end angle=90,radius=\del]
        -- cycle;

      \draw[gray!70, line width=0.8pt] (A) -- (B);
    \end{scope}
  \end{scope}

  \begin{scope}[xshift=3*\panel cm]
    \draw[gray!60, line width=0.9pt]
      (0,-\h) rectangle (\w,\h);
    \fill[pattern=north east lines, pattern color=gray!70]
      (0,-\h) rectangle (\del,\h);
    \draw[line width=1.0pt]
      (0,-\h) rectangle (\del,\h);
    \draw[gray!70] (0,-\h) -- (0,\h);
  \end{scope}

  \begin{scope}[xshift=4*\panel cm]
    \fill[pattern=north east lines, pattern color=gray!70]
      (0,-\h) rectangle (\w,\h);
    \draw[line width=1.1pt]
      (0,-\h) rectangle (\w,\h);
  \end{scope}

  \coordinate (c12) at ({(\panel-\w)/2},0);
  \coordinate (c23) at ({(3*\panel-\del)/2},0);
  \coordinate (c34) at ({(5*\panel+\del)/2},0);
  \coordinate (c45) at ({(7*\panel+\w)/2},0);

  \draw[->, >=stealth, gray!70, line width=1.0pt]
    ($(c12)+(-\ahalf,0)$) -- ($(c12)+(\ahalf,0)$);
  \draw[->, >=stealth, gray!70, line width=1.0pt]
    ($(c23)+(-\ahalf,0)$) -- ($(c23)+(\ahalf,0)$);
  \draw[->, >=stealth, gray!70, line width=1.0pt]
    ($(c34)+(-\ahalf,0)$) -- ($(c34)+(\ahalf,0)$);
  \draw[->, >=stealth, gray!70, line width=1.0pt]
    ($(c45)+(-\ahalf,0)$) -- ($(c45)+(\ahalf,0)$);

\end{tikzpicture}
\caption{An example of the interpolation in case (ii): two rectangles $K,L \in \mathcal{K}^2_2$ meet on an edge with affine dimension $1$.}
\label{fig:case-ii-panels}
\end{figure}

    We have the obvious bound
    $$d_\mathcal{H}(K \cap ((K \cap L) + \delta B), K \cap L) \leq \delta,$$
    with the same for $K$ swapped for L, as well as 
    $$d_\mathcal{H}((K \cap L) + \delta B, K \cap L) \leq \delta.$$
    As $\phi$ is continuous, we can take $\delta$ small enough that then say 
    \begin{align*}
        &\phi((K \cap L) + \delta B) - \phi(K \cap ((K \cap L) + \delta B)) \\ 
        &\leq \phi((K \cap L) + \delta B) - \phi(K \cap L) + [\phi(K \cap ((K \cap L) + \delta B) - \phi(K \cap L)] \\
        &< \frac{\epsilon}{2}.
    \end{align*}
    Then using monotonicity we get that (\ref{theorem 3.6 case ii length}) is bounded above by 
    $$\phi(K) + \phi(L) - 2\phi(K \cap L) + \epsilon = \Delta_\phi(K, L) + \epsilon,$$
    as desired.
    \item \textit{($\phi(K \cap L) = 0$)} Here we put Lemma \ref{lem:spheropolyhedral} to use. Letting $p \in K$ and $q \in L$, we take 
    $$H : = [p,q] + \delta B \quad \text{such that} \quad \phi(H) < \frac{\epsilon}{2}$$
    for some sufficiently small $\delta >0$. We interpolate
    $$K \longrightarrow H \cap K \longrightarrow H \longrightarrow H \cap L \longrightarrow L,$$
    and this interpolation stays in $\mathcal{K}^n_k$; again, Lemma \ref{lem:piecewise-minkowski-is-piecewise-metric} has this has length 
    \begin{equation}\label{theorem 3.6 case iii length}\phi(K) + \phi(L) + [2\phi(H) - 2\phi(H \cap K) - 2\phi(H \cap L)].\end{equation}

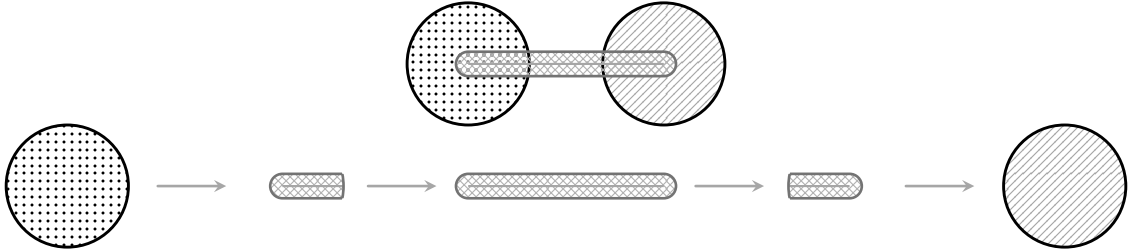
\begin{figure}[H]
\centering
\begin{tikzpicture}[scale=0.65, line cap=round, line join=round]

\def\R{1.25}
\def\rS{0.25}
\def\sHL{2.0}
\def\rowsep{2.5}
\def\ahalf{0.7}

\def\xA{0}
\def\xB{4.4}
\def\xC{10.2}
\def\xD{16.0}
\def\xE{20.4}

\begin{scope}[xshift=\xC cm, yshift=\rowsep cm]
  \coordinate (cK) at (-\sHL,0);
  \coordinate (cL) at ( \sHL,0);
  \coordinate (A) at (-\sHL,0);
  \coordinate (B) at ( \sHL,0);

  \begin{scope}
    \clip (cK) circle (\R);
    \fill[pattern=dots, pattern color=black] (-6,-6) rectangle (6,6);
  \end{scope}
  \draw[line width=1.1pt] (cK) circle (\R);

  \begin{scope}
    \clip (cL) circle (\R);
    \fill[pattern=north east lines, pattern color=gray!70] (-6,-6) rectangle (6,6);
  \end{scope}
  \draw[line width=1.1pt] (cL) circle (\R);

  \path[fill, pattern=crosshatch, pattern color=gray!60]
    ([shift={(0,\rS)}]A)
      arc[start angle=90,end angle=270,radius=\rS]
    -- ([shift={(0,-\rS)}]B)
      arc[start angle=-90,end angle=90,radius=\rS]
    -- cycle;

  \draw[black!55, line width=1.0pt]
    ([shift={(0,\rS)}]A)
      arc[start angle=90,end angle=270,radius=\rS]
    -- ([shift={(0,-\rS)}]B)
      arc[start angle=-90,end angle=90,radius=\rS]
    -- cycle;

  \draw[black!35, line width=0.8pt, line cap=butt] (A) -- (B);
\end{scope}

\begin{scope}[xshift=\xA cm]
  \coordinate (c1) at (0,0);
  \begin{scope}
    \clip (c1) circle (\R);
    \fill[pattern=dots, pattern color=black] (-6,-6) rectangle (6,6);
  \end{scope}
  \draw[line width=1.1pt] (c1) circle (\R);
\end{scope}

\begin{scope}[xshift=\xB cm]
  \coordinate (cK) at (0,0);
  \coordinate (A) at (0,0);
  \coordinate (B) at (2*\sHL,0);

  \begin{scope}
    \clip (cK) circle (\R);
    \clip
      ([shift={(0,\rS)}]A)
        arc[start angle=90,end angle=270,radius=\rS]
      -- ([shift={(0,-\rS)}]B)
        arc[start angle=-90,end angle=90,radius=\rS]
      -- cycle;
    \fill[pattern=crosshatch, pattern color=gray!60] (-7,-5) rectangle (7,5);
  \end{scope}

  \begin{scope}
    \clip (cK) circle (\R);
    \draw[black!55, line width=1.0pt]
      ([shift={(0,\rS)}]A)
        arc[start angle=90,end angle=270,radius=\rS]
      -- ([shift={(0,-\rS)}]B)
        arc[start angle=-90,end angle=90,radius=\rS]
      -- cycle;
    \draw[black!35, line width=0.8pt, line cap=butt] (A) -- (B);
  \end{scope}

  \begin{scope}
    \clip
      ([shift={(0,\rS)}]A)
        arc[start angle=90,end angle=270,radius=\rS]
      -- ([shift={(0,-\rS)}]B)
        arc[start angle=-90,end angle=90,radius=\rS]
      -- cycle;
    \draw[black!55, line width=1.0pt] (cK) circle (\R);
  \end{scope}
\end{scope}

\begin{scope}[xshift=\xC cm]
  \coordinate (A) at (-\sHL,0);
  \coordinate (B) at ( \sHL,0);

  \path[fill, pattern=crosshatch, pattern color=gray!60]
    ([shift={(0,\rS)}]A)
      arc[start angle=90,end angle=270,radius=\rS]
    -- ([shift={(0,-\rS)}]B)
      arc[start angle=-90,end angle=90,radius=\rS]
    -- cycle;

  \draw[black!55, line width=1.0pt]
    ([shift={(0,\rS)}]A)
      arc[start angle=90,end angle=270,radius=\rS]
    -- ([shift={(0,-\rS)}]B)
      arc[start angle=-90,end angle=90,radius=\rS]
    -- cycle;

  \draw[black!35, line width=0.8pt, line cap=butt] (A) -- (B);
\end{scope}

\begin{scope}[xshift=\xD cm]
  \coordinate (cL) at (0,0);
  \coordinate (A) at (-2*\sHL,0);
  \coordinate (B) at (0,0);

  \begin{scope}
    \clip (cL) circle (\R);
    \clip
      ([shift={(0,\rS)}]A)
        arc[start angle=90,end angle=270,radius=\rS]
      -- ([shift={(0,-\rS)}]B)
        arc[start angle=-90,end angle=90,radius=\rS]
      -- cycle;
    \fill[pattern=crosshatch, pattern color=gray!60] (-7,-5) rectangle (7,5);
  \end{scope}

  \begin{scope}
    \clip (cL) circle (\R);
    \draw[black!55, line width=1.0pt]
      ([shift={(0,\rS)}]A)
        arc[start angle=90,end angle=270,radius=\rS]
      -- ([shift={(0,-\rS)}]B)
        arc[start angle=-90,end angle=90,radius=\rS]
      -- cycle;
    \draw[black!35, line width=0.8pt, line cap=butt] (A) -- (B);
  \end{scope}

  \begin{scope}
    \clip
      ([shift={(0,\rS)}]A)
        arc[start angle=90,end angle=270,radius=\rS]
      -- ([shift={(0,-\rS)}]B)
        arc[start angle=-90,end angle=90,radius=\rS]
      -- cycle;
    \draw[black!55, line width=1.0pt] (cL) circle (\R);
  \end{scope}
\end{scope}

\begin{scope}[xshift=\xE cm]
  \coordinate (c5) at (0,0);
  \begin{scope}
    \clip (c5) circle (\R);
    \fill[pattern=north east lines, pattern color=gray!70] (-6,-6) rectangle (6,6);
  \end{scope}
  \draw[line width=1.1pt] (c5) circle (\R);
\end{scope}

\coordinate (c12) at ({(\xA+\xB)/2},0);
\coordinate (c23) at ({(\xB+\xC)/2},0);
\coordinate (c34) at ({(\xC+\xD)/2},0);
\coordinate (c45) at ({(\xD+\xE)/2},0);

\coordinate (c12s) at ($(c12)+(0.35,0)$);
\coordinate (c45s) at ($(c45)+(-0.35,0)$);

\coordinate (c23s) at ($(c23)+(-0.45,0)$);
\coordinate (c34s) at ($(c34)+(0.45,0)$);

\draw[->, >=stealth, gray!70, line width=1.0pt]
  ($(c12s)+(-\ahalf,0)$) -- ($(c12s)+(\ahalf,0)$);
\draw[->, >=stealth, gray!70, line width=1.0pt]
  ($(c23s)+(-\ahalf,0)$) -- ($(c23s)+(\ahalf,0)$);
\draw[->, >=stealth, gray!70, line width=1.0pt]
  ($(c34s)+(-\ahalf,0)$) -- ($(c34s)+(\ahalf,0)$);
\draw[->, >=stealth, gray!70, line width=1.0pt]
  ($(c45s)+(-\ahalf,0)$) -- ($(c45s)+(\ahalf,0)$);

\end{tikzpicture}
\caption{An example of interpolation in case (iii): two disjoint discs in $\mathcal{K}^2_2$ are bridged by a thin spheropolyhedral set.}
\label{fig:case-iii-panels}
\end{figure}
    But then (\ref{theorem 3.6 case iii length}) is dominated by 
    $$\phi(K) + \phi(L)  + 2\phi(H) < \phi(K) + \phi(L) + \epsilon = \Delta_\phi(K,L),$$
    as desired.
\end{enumerate}
These cases cover all possibilities for $K \cap L$, and so in every scenario we have given a $\Delta_\phi$-rectifiable joining $K,L$ with 
$$L(\gamma) \leq \Delta_\phi(K,L) + \epsilon.$$
It of course then follows 
$$\ol \Delta_\phi(K,L) \leq \Delta_\phi(K,L),$$
as desired. This supplies the backwards direction, and thus concludes the proof.
\end{proof}
\end{theorem}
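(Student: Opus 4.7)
The plan is to handle the two directions of the biconditional separately: the forward direction by contrapositive---producing an explicit pair $(K,L)$ when $\phi_1 \not\equiv 0$---and the backward direction by an explicit case-split construction of near-optimal rectifiable paths.

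For the forward direction, assuming $\phi_1 \not\equiv 0$, I would invoke Firey's characterization (Proposition \ref{prop:firey}) to write
$$\phi_1(K) = \frac{1}{n}\int_{S^{n-1}} h_K(u)\, dS(G_1,\ldots,G_{n-1};u).$$
Nontriviality of $\phi_1$ means the support of the mixed area measure is nonempty, so I would pick $v$ in it and consider $K = B$ and $L = B + t\{v\}$ for $t$ sufficiently large that $B \cap (B + t\{v\}) = \varnothing$. Given any $\Delta_\phi$-rectifiable curve $\gamma$ between these, I would decompose via Definition \ref{piecewise-metric} and refine each piece using Lemma \ref{lem:intersections} to produce a partition $\{r_k\}$ with consecutive images intersecting, yielding $L(\gamma) \geq \sum_k \Delta_\phi(\gamma(r_{k-1}), \gamma(r_k))$. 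Nonnegativity of the individual $\Delta_{\phi_i}$ terms (each $\phi_i$ being monotone) lets me discard all but the $\phi_1$ component, and Firey's formula combined with the identity $h_A + h_B - 2\min(h_A,h_B) = |h_A - h_B|$ converts the sum into an $L^1$ integral against $dS$. A telescoping $L^1$-triangle inequality then yields
$$L(\gamma) \geq \frac{t}{n}\int_{S^{n-1}} |\langle u,v\rangle|\, dS(G_1,\ldots,G_{n-1};u),$$
which is strictly positive by the choice of $v$ and grows linearly in $t$; choosing $t$ large forces this to exceed $2\phi(B) = \Delta_\phi(K,L)$.

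For the backward direction, assuming $\phi_1 \equiv 0$, I would, given $K, L \in \K^n_k$ and $\epsilon > 0$, explicitly construct a path $\gamma$ with $L(\gamma) \leq \Delta_\phi(K,L) + \epsilon$, splitting into three cases based on $K \cap L$. If $K \cap L \in \K^n_k$, the straight interpolation $K \to K \cap L \to L$ (in the sense of Remark \ref{rem3.3}) stays in $\K^n_k$ and has length exactly $\Delta_\phi(K,L)$ by Lemma \ref{lem:piecewise-minkowski-is-piecewise-metric}. If $K \cap L \notin \K^n_k$ but $\phi(K \cap L) > 0$, I would thicken $K \cap L$ by $\delta B$ and interpolate through $K \cap ((K \cap L) + \delta B)$, $(K \cap L) + \delta B$, and $L \cap ((K \cap L) + \delta B)$; the excess length vanishes as $\delta \to 0$ by continuity of $\phi$ and the Hausdorff bounds. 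If $\phi(K \cap L) = 0$, I would apply Lemma \ref{lem:spheropolyhedral}---which is precisely where the hypothesis $\phi_1 \equiv 0$ enters---to obtain a bridging set $H = [p,q] + \delta B$ with $\phi(H) < \epsilon/2$, then interpolate $K \to H \cap K \to H \to H \cap L \to L$.

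The main obstacle is the forward direction: one must apply the local subdivision of Lemma \ref{lem:intersections} to arbitrary rectifiable paths uniformly enough to produce a \emph{global} lower bound, and then ensure the telescoping $L^1$-inequality for support function differences survives integration against a potentially singular mixed area measure. The essential technical maneuver is choosing $v \in \supp S(G_1,\ldots,G_{n-1};\cdot)$ rather than on $S^{n-1}$ generally, so that $\int_{S^{n-1}} |\langle u,v\rangle|\, dS > 0$ is a \emph{quantitative} positivity statement; this is the only place in the argument where nontriviality of $\phi_1$ produces a bound that grows with $t$, and hence the only mechanism by which rectifiable paths are forced to be strictly longer than $\Delta_\phi$ when $\phi_1$ is present.
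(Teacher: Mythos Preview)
Your proposal is correct and follows essentially the same approach as the paper's proof: the same contrapositive with $K=B$, $L=B+t\{v\}$ for $v$ in the support of the mixed area measure, the same refinement via Lemma~\ref{lem:intersections} followed by dropping to the $\phi_1$ component and telescoping the $L^1$ support-function estimate, and the same three-case construction for the backward direction. One small point worth making explicit in your write-up is that the role of Lemma~\ref{lem:intersections} is to guarantee $\gamma(r_{k-1})\cap\gamma(r_k)\neq\varnothing$ so that the inequality $h_{\gamma(r_{k-1})\cap\gamma(r_k)}\le\min(h_{\gamma(r_{k-1})},h_{\gamma(r_k)})$ holds---this is the step that bridges $\Delta_{\phi_1}$ to the $|h_A-h_B|$ integrand, and your sketch slightly elides it.
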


Before we prove our second main result, we note the following proposition which will allow us to refine things a bit.

\begin{prop}
    Suppose $\phi_1, \phi_n \in \textup{Val}$ are $k$-strictly monotone and $1$- and $n$-homogeneous respectively. Then $\phi_1$ is $1$-strictly monotone and $\phi_n$ is not $k$-strictly monotone for any $k$ less than $n$.

    \begin{proof} Note first $1$-homogeneous valuations $\phi_1 \in \textup{Val}$ are Minkowski additive (see \cite{goodeyweil} Theorem 3.2), so for $K, L \in \mathcal{K}^n$ with $K \subsetneq L$ we have
    $$\phi_1(K + \epsilon B) < \phi_1(L + \epsilon B) \quad \Longrightarrow \quad \phi_1(K) + \epsilon\phi_1(B) < \phi_1(L) + \epsilon \phi_1(B) \quad \Longrightarrow \quad \phi_1(K) < \phi_1(L),$$
    as $K + \epsilon B, L + \epsilon B \in \mathcal{K}^n_n \subseteq \mathcal{K}^n_k$. Of course, every $K \in \mathcal{K}^n_1$ also properly contains a translate of a dilate of itself, i.e. a $\delta K + \{x\} \in \mathcal{K}^n_1$ for some $\delta < 1$, $\{x\} \in \mathbb{R}^n.$ Thus 
    $$0 \leq \phi_1(\delta K + \{x\}) < \phi_1(K),$$
    so $\phi_1$ is indeed $1$-strictly monotone. 

    For $\phi_n$ then, recall $\phi_n  = c\textup{Vol}$ for some $c > 0$. It follows $\phi_n$ vanishes on sets of affine dimension less than $n$, so it cannot be $k$-strictly monotone for $k$ less than $n$.
    \end{proof}
\end{prop}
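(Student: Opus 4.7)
The plan is to handle the two claims separately. For the first claim, I would start from the structural fact that any monotone $1$-homogeneous element of $\textup{Val}$ is Minkowski additive, which is accessible via Firey's characterization (Proposition \ref{prop:firey}) since mixed volumes are linear in each slot. Given $K \subsetneq L$ in $\mathcal{K}^n_1$, I would inflate by $\epsilon B$: the bodies $K + \epsilon B$ and $L + \epsilon B$ lie in $\mathcal{K}^n_n \subseteq \mathcal{K}^n_k$ and remain properly nested for small $\epsilon$, so the $k$-strict monotonicity hypothesis yields $\phi_1(K + \epsilon B) < \phi_1(L + \epsilon B)$. Minkowski additivity cancels the common term $\epsilon \phi_1(B)$, leaving the desired $\phi_1(K) < \phi_1(L)$.

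To upgrade this to full $1$-strict monotonicity, which requires also $\phi_1(\varnothing) < \phi_1(L)$, i.e.\ $\phi_1(L) > 0$, for $L \in \mathcal{K}^n_1$, I would exploit $1$-homogeneity and translation-invariance. Every $L \in \mathcal{K}^n_1$ properly contains a scaled translate $\delta L + x$ for some $\delta \in (0,1)$, obtained by shrinking toward any point of $L$, and this translate still lies in $\mathcal{K}^n_1$. Applying the proper-inclusion strict monotonicity just established together with the identity $\phi_1(\delta L + x) = \delta \phi_1(L)$ forces $(1-\delta)\phi_1(L) > 0$, hence $\phi_1(L) > 0$.

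For the second claim, I would invoke Hadwiger's theorem to write $\phi_n = c \cdot \textup{Vol}$ for some $c \in \mathbb{R}$; non-triviality together with $k$-strict monotonicity forces $c > 0$. If $k < n$, then $\mathcal{K}^n_k$ contains non-full-dimensional bodies on which Lebesgue measure vanishes, and I can take $K \subsetneq L$ both of affine dimension exactly $k$ — say, nested $k$-dimensional balls inside a common $k$-flat. Then $\phi_n(K) = \phi_n(L) = 0$, violating strict monotonicity on $\mathcal{K}^n_k$ and yielding a contradiction.

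The step I expect to be the main obstacle is the positivity portion of the first claim: the Minkowski-additivity argument alone only establishes proper-inclusion strict monotonicity, and bridging to $\phi_1 > 0$ requires the separate self-similarity argument built on $1$-homogeneity. Finding the right self-contained trigger for positivity — here, the properly contained scaled translate — is the one place where the standard monotonicity-plus-continuity machinery from the preliminaries is not immediately sufficient, and one must genuinely use the homogeneity degree.
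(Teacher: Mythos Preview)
Your proposal is correct and follows essentially the same route as the paper: Minkowski additivity of $\phi_1$ (the paper cites \cite{goodeyweil} directly rather than deriving it from Firey) plus the $\epsilon B$-inflation trick for the strict-inclusion part, the same scaled-translate argument for positivity, and Hadwiger's identification $\phi_n = c\,\textup{Vol}$ for the second claim. The only cosmetic difference is that proper nesting of $K+\epsilon B \subsetneq L+\epsilon B$ actually holds for \emph{all} $\epsilon>0$, not just small ones, so your qualifier ``for small $\epsilon$'' is unnecessary.
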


We move then to our second result now. 

\begin{theorem}[Restatement of Theorem \ref{thm:2}]
    Let $\phi \in \textup{Val}$ be $k$-strictly monotone. If $\phi$ is $1$-homogeneous, then 
    $$\ol \Delta_\phi(K,L) = \rho_\phi(K,L) \quad \text{for all } K,L \in \mathcal{K}^n_1,$$
    where $\rho_\phi$ is a metric in this case. In a dual fashion, if $\phi$ is $n$-homogeneous for $n \neq 1$, then 
    $$\ol \rho_\phi(K,L) = \Delta_\phi(K,L) \quad \textup{for all } K, L \in \mathcal{K}^n_n,$$
    where $\Delta_\phi$ is a metric in this case.
    \begin{proof}
        We first consider the $1$-homogeneous case. Let $K, L \in \mathcal{K}^n_1$. Following the proof of the forward direction of Theorem \ref{thm:1}, we note we have 
        \begin{align*}
            L(\gamma) &\geq \int_{S^{n-1}} |h_K(u) - h_L(u)| \, dS(G_1, \dots, G_{n-1}; u) \\
            &= \int_{S^{n-1}} 2\max\{h_K(u), h_L(u)\}  - h_K(u) - h_L(u) \, dS(G_1, \dots, G_{n-1}; u) \\ 
            &= \int_{S^{n-1}} 2h_{K \tilde \cup L}(u)  - h_K(u) - h_L(u) \, dS(G_1, \dots, G_{n-1}; u)\\
            &= 2\phi(K \tilde \cup L) - \phi(K) - \phi(L) \\
            &= \rho_\phi(K,L)
        \end{align*}
        for any $\Delta_\phi$-rectifiable curve $\gamma$ in $\mathcal{K}^n_1$ joining $K$ and $L$, given Firey's characterization. It follows then 
        $$\ol \Delta(K,L) \geq \rho_\phi(K,L).$$
        We also have though that the length (with respect to $\Delta_\phi$) of the interpolation 
        $$K \longrightarrow K \tilde \cup L \longrightarrow L$$
        is (by Lemma \ref{lem:piecewise-minkowski-is-piecewise-metric}) exactly 
        $$2\phi(K \tilde \cup L) - \phi(K) - \phi(L) = \rho_\phi(K,L),$$
        so $\ol \Delta(K,L) = \rho_\phi(K,L)$ as desired (see Figure \ref{fig:K-to-conv-to-L}). 
        
\begin{figure}[H]
\centering
\begin{tikzpicture}[scale=0.65, line cap=round, line join=round]

\def\R{1.25}
\def\d{4.0}
\def\rowsep{3.0}
\def\ahalf{1.0}

\begin{scope}[yshift=\rowsep cm]
  \coordinate (Kt) at (4,0);
  \coordinate (Lt) at (10,0);

  \fill[pattern=dots, pattern color=black] (Kt) circle (\R);
  \draw[line width=1.1pt] (Kt) circle (\R);

  \fill[pattern=north east lines, pattern color=gray!70] (Lt) circle (\R);
  \draw[line width=1.1pt] (Lt) circle (\R);
\end{scope}

\pgfmathsetmacro{\xL}{0}
\pgfmathsetmacro{\xS}{7}
\pgfmathsetmacro{\xR}{14}
\pgfmathsetmacro{\hH}{\R+0.5*\d}
\pgfmathsetmacro{\xLr}{\xL+\R}
\pgfmathsetmacro{\xSl}{\xS-\hH}
\pgfmathsetmacro{\xSr}{\xS+\hH}
\pgfmathsetmacro{\xRl}{\xR-\R}

\begin{scope}[xshift=\xL cm]
  \coordinate (cL) at (0,0);
  \begin{scope}
    \clip (cL) circle (\R);
    \fill[pattern=dots, pattern color=black] (-6,-6) rectangle (6,6);
  \end{scope}
  \draw[line width=1.1pt] (cL) circle (\R);
\end{scope}

\begin{scope}[xshift=\xS cm]
  \coordinate (A) at (-\d/2,0);
  \coordinate (B) at ( \d/2,0);
  \path[fill, pattern=crosshatch, pattern color=gray!60]
    ([shift={(0,\R)}]A)
      arc[start angle=90,end angle=270,radius=\R]
    -- ([shift={(0,-\R)}]B)
      arc[start angle=-90,end angle=90,radius=\R]
    -- cycle;
  \draw[line width=1.1pt]
    ([shift={(0,\R)}]A)
      arc[start angle=90,end angle=270,radius=\R]
    -- ([shift={(0,-\R)}]B)
      arc[start angle=-90,end angle=90,radius=\R]
    -- cycle;
\end{scope}

\begin{scope}[xshift=\xR cm]
  \coordinate (cR) at (0,0);
  \begin{scope}
    \clip (cR) circle (\R);
    \fill[pattern=north east lines, pattern color=gray!70] (-6,-6) rectangle (6,6);
  \end{scope}
  \draw[line width=1.1pt] (cR) circle (\R);
\end{scope}

\coordinate (c12) at ({0.5*(\xLr+\xSl)},0);
\coordinate (c23) at ({0.5*(\xSr+\xRl)},0);

\draw[->, >=stealth, gray!70, line width=1.0pt]
  ($(c12)+(-\ahalf,0)$) -- ($(c12)+(\ahalf,0)$);
\draw[->, >=stealth, gray!70, line width=1.0pt]
  ($(c23)+(-\ahalf,0)$) -- ($(c23)+(\ahalf,0)$);

\end{tikzpicture}
\caption{Interpolation $K \rightarrow K \tilde \cup L \rightarrow L$ for two discs $K, L \in \mathcal{K}^2_2$.}
\label{fig:K-to-conv-to-L}
\end{figure}
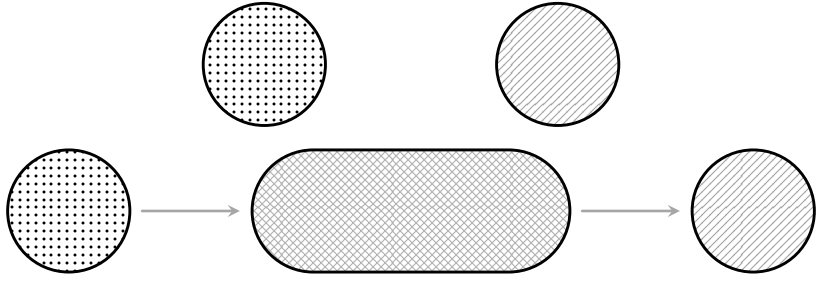

    $\rho_\phi$ is thus a metric in this case as it a pseudometric \textit{and} a semimetric, where the metric axioms are covered between those two definitions. 

    Consider the $n$-homogeneous case. As mentioned earlier then, we have 
    $$\phi = c\textup{Vol} \quad \text{for some } c > 0.$$
    In this case then, $\Delta_\phi$ is merely a (positive) multiple of the symmetric difference metric, and thus a metric. We also get then the bound 
    \begin{equation}\label{delta leq rho}\Delta_\phi(K,L) \leq \rho_\phi(K,L) \quad \text{for all } K,L \in \mathcal{K}^n_n\end{equation}
    as
    $$c[\textup{Vol}(K) + \textup{Vol}(L) - 2\textup{Vol}(K \cap L)] = c[2\textup{Vol}(K \cup L) - \textup{Vol}(K) - \textup{Vol}(L)] \leq c[2\textup{Vol}(K \tilde \cup L) - \textup{Vol}(K) - \textup{Vol}(L)].$$
    For $K, L \in \mathcal{K}^n_n$ then, let $\gamma$ be a $\rho_\phi$-rectifiable curve joining $K$ and $L$, composed of $N$ metric continuous pieces $\gamma_i$. 
    For any particular $\gamma_i : [p_{i - 1}, p_i] \rightarrow \mathcal{K}^n_n$, if $\{q_j\}_{j = 0}^M$ is a partition of $[p_{i-1}, p_i]$ we get 
    \begin{equation}\label{theorem 3.8 length}\Delta_\phi(\gamma(p_{i - 1}),\gamma(p_i)) \leq \sum_{i = 1}^M \Delta_\phi(\gamma(q_{j -1}), \gamma(q_j)) \leq \sum_{i = 1}^M \rho_\phi(\gamma(q_{j -1}), \gamma(q_j))\end{equation}
    by (\ref{delta leq rho}) and the triangle inequality for $\Delta_\phi$. Summing the left and right sides of (\ref{theorem 3.8 length}) then across all $i \in \{1, \dots, N\}$ and applying the triangle inequality once more gets 
    $$\Delta(K,L) \leq L(\gamma),$$
    where $L(\gamma)$ is the length of $\gamma$ with respect to $\rho_\phi$. By the definition of infimum we get 
    $$\Delta(K,L) \leq \ol \rho_\phi(K,L).$$
    Note then as $\phi$ is $n$-homogeneous with $n \neq 1$, the proof of the backwards direction of Theorem \ref{thm:1} gives us a $\Delta_\phi$-rectifiable path $\gamma$ with $\Delta_\phi$-length 
    $$L(\gamma) \leq \Delta_\phi(K,L) + \epsilon$$
    for given $\epsilon >0$. Lemma \ref{lem:piecewise-minkowski-is-piecewise-metric} has that this path is also $\rho_\phi$-rectifiable though, with $\rho_\phi$-length the same as its $\Delta_\phi$ length. Taking $\epsilon \rightarrow 0$ in the above expression then gets $\ol \rho_\phi(K, L) \leq \Delta_\phi(K,L)$, so
    $$\ol \rho_\phi(K,L) = \Delta_\phi(K,L)$$
    as desired.
    \end{proof}
\end{theorem}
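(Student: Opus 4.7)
The plan is to handle the two halves of the theorem separately, in each case sandwiching the intrinsic pseudometric between the same quantity from above and below. In both halves the upper bounds will come from exhibiting a single rectifiable curve built by Minkowski interpolation, whose length can be read off from Lemma~\ref{lem:piecewise-minkowski-is-piecewise-metric}; the lower bounds will come from bounding arbitrary rectifiable curves using the tools already developed for Theorem~\ref{thm:1}.

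For the $1$-homogeneous half, I would first bring in Firey's characterization (Proposition~\ref{prop:firey}), which represents $\phi$ as an integral of support functions against a mixed area measure $dS(G_1,\dots,G_{n-1};\cdot)$. Given any $\Delta_\phi$-rectifiable curve $\gamma$ joining $K,L \in \mathcal{K}^n_1$, the plan is to replay the chain of inequalities from the forward direction of Theorem~\ref{thm:1}: refine the given partition using Lemma~\ref{lem:intersections} to obtain consecutive nonempty intersections, bound $h_{M\cap N}\le\min\{h_M,h_N\}$ pointwise, and telescope the resulting absolute values to arrive at $\tfrac{1}{n}\int |h_K-h_L|\,dS$. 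Because $|h_K-h_L| = 2\max\{h_K,h_L\} - h_K - h_L = 2h_{K\tilde\cup L} - h_K - h_L$, this integral equals exactly $\rho_\phi(K,L)$. For the matching upper bound I would use the interpolation $K \longrightarrow K\tilde\cup L \longrightarrow L$; since both legs are nested inclusions, Lemma~\ref{lem:piecewise-minkowski-is-piecewise-metric} yields a $\Delta_\phi$-rectifiable curve of length $[\phi(K\tilde\cup L)-\phi(K)] + [\phi(K\tilde\cup L)-\phi(L)] = \rho_\phi(K,L)$. The metric claim for $\rho_\phi$ on $\mathcal{K}^n_1$ then falls out: $\rho_\phi$ is already a semimetric by the $k$-strict monotonicity of $\phi$, and identifying it with the pseudometric $\ol\Delta_\phi$ equips it with the triangle inequality.

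For the $n$-homogeneous half, I would begin by invoking Hadwiger's theorem (recalled after Proposition~\ref{delta-semimetric}) to write $\phi = c\,\textup{Vol}$ with $c>0$, so that $\Delta_\phi$ is a positive multiple of the symmetric difference metric and in particular a metric. The comparison $\Delta_\phi \le \rho_\phi$ on $\mathcal{K}^n_n$ follows from $\textup{Vol}(K\cup L) \le \textup{Vol}(K\tilde\cup L)$ together with the inclusion--exclusion identity $\textup{Vol}(K)+\textup{Vol}(L) = \textup{Vol}(K\cup L)+\textup{Vol}(K\cap L)$. For the lower bound on $\ol\rho_\phi(K,L)$, given any $\rho_\phi$-rectifiable $\gamma$ I would refine each metric piece, apply $\Delta_\phi\le\rho_\phi$ termwise, and telescope using the bona fide triangle inequality for the metric $\Delta_\phi$ to conclude $\Delta_\phi(K,L) \le L(\gamma)$. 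For the upper bound I would call on the backward direction of Theorem~\ref{thm:1}: since $\phi$ is purely $n$-homogeneous with $n>1$, we have $\phi_1\equiv 0$, and that argument produces $\Delta_\phi$-rectifiable curves of $\Delta_\phi$-length at most $\Delta_\phi(K,L)+\epsilon$. Crucially those curves are concatenations of Minkowski interpolations between nested bodies, so Lemma~\ref{lem:piecewise-minkowski-is-piecewise-metric} applies verbatim to $\rho_\phi$ as well and assigns them the same numerical length; letting $\epsilon\to 0$ finishes the equality.

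The subtle point I anticipate is the $1$-homogeneous lower bound, where the chain of inequalities from Theorem~\ref{thm:1}'s forward direction was originally set up to manufacture a strict gap, and here I need it to be sharp and to produce $\rho_\phi(K,L)$ exactly. What saves this is that for purely $1$-homogeneous $\phi$ there are no higher-degree McMullen components to discard, so the initial bound by the $\phi_1$-term collapses to an equality, the pointwise support-function bound $h_{M\cap N}\le\min\{h_M,h_N\}$ integrates cleanly via Firey's representation, and the telescoping sum of $|h_{\gamma(r_{k-1})}-h_{\gamma(r_k)}|$ is bounded below by the endpoint difference $|h_K - h_L|$ whose integral is precisely $\rho_\phi(K,L)$. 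Once this sharpness is verified, both halves of the theorem fall into place.
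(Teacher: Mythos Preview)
Your proposal is correct and follows essentially the same route as the paper's proof: in both halves you obtain the upper bound from the Minkowski interpolation via Lemma~\ref{lem:piecewise-minkowski-is-piecewise-metric} and the lower bound by replaying the relevant direction of Theorem~\ref{thm:1}, with the $1$-homogeneous case using Firey's representation to collapse the integral to $\rho_\phi(K,L)$ and the $n$-homogeneous case using $\Delta_\phi\le\rho_\phi$ together with the triangle inequality for $\Delta_\phi$. The subtle point you flag about sharpness of the support-function chain is exactly what the paper glosses over with the phrase ``following the proof of the forward direction of Theorem~\ref{thm:1},'' and your explanation of why it works is accurate.
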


\section{Existence of shortest paths}

Let $\phi \in \textup{Val}$ be $k$-strictly monotone. When $\phi$ is $1$-homogeneous, Theorem \ref{thm:2} establishes that
$$\rho_{\phi}(K,L) = 2\phi(K \tilde \cup L) - \phi(K) - \phi(L) \quad \text{for all } K , L \in \mathcal{K}^n_1$$
is a metric. Indeed, as $\phi$ in this case will vanish on points, $\rho_\phi$ will give a metric on the whole $\mathcal{K}^n$. Dually, when $\phi$ is $n$-homogeneous, we have for some $c > 0$ that
$$\Delta_\phi(K,L) = \phi(K) + \phi(L) - 2\phi(K \cap L) = c\Delta_{\textup{Vol}} \quad \text{for all } K, L \in \mathcal{K}^n_n$$
is a metric. We confirm in this section that these metrics are intrinsic in the normal sense and consider the problem of determining when they admit shortest paths.

To start, we recall the normal metric-geometric definitions. 

\begin{defn}
    Let $(X,d)$ be a metric space and take $\gamma : [a,b] \rightarrow X$ to be a continuous path. We define the length of $\gamma$ to be the quantity 
    \begin{align*}L(\gamma) := \sup \left \{\sum_{i = 1}^N d_\phi(\gamma(p_{i-1}), \gamma(p_i)): \{p_i\}_{i =0}^N \text{ is a partition of } [a,b]  \right\}\end{align*}
    when this supremum exists; in that case, we call $\gamma$ rectifiable. Assuming every $x$ and $y$ in $X$ may be joined by a rectifiable path, we define the induced intrinsic metric
    $$\ol d(x,y) := \inf\{L(\gamma) : \gamma : [a,b] \rightarrow X \text{ is rectifiable with } \gamma(a) = x, \gamma(b) = y \} \quad \text{for all } x, y \in X.$$
    A rectifiable path $\gamma$ joining $x$ and $y \in X$ is termed a shortest path if $L(\gamma) = \ol d(x,y)$. If $\ol d = d$ we say the metric $d$ is intrinsic and call $(X,d)$ a length space.
\end{defn}

We start our analysis in the $1$-homogeneous case: recall we remarked in this setting that $\rho_\phi$  is a generalization of a metric considered by Florian in \cite{florian}. Therein Florian introduces the metric
$$\rho_W(K,L) := 2W(K \tilde \cup L) - W(K) - W(L) \quad \text{for all } K,L \in \mathcal{K}^n,$$
where $W$ is \textit{mean width}, i.e. 
\begin{align}\label{mean-width}W(K) := \frac{2}{\omega_{n-1}}\int_{S^{n-1}} h_K(u) \, d\sigma(u) = \frac{1}{\omega_{n-1}}\int_{S^{n-1}} h_K(u) + h_K(-u) \, d\sigma(u) \quad \text{for all } K \in \mathcal{K}^n;\end{align}
here $\sigma$ is the surface area measure on $S^{n-1}$ and $\omega_{n-1} = \sigma(S^{n-1})$ (see Figure \ref{figure:mean-width}).

\begin{figure}[H]
\centering
\begin{tikzpicture}[scale=0.65, line cap=round, line join=round]

\fill[pattern=north east lines, pattern color=gray!70]
  plot[smooth cycle, tension=0.8] coordinates{
    (-2.4,-0.2)
    (-1.8,1.7)
    (-0.5,2.4)
    (1.3,2.1)
    (2.6,0.8)
    (2.1,-1.4)
    (0.6,-2.3)
    (-0.9,-2.4)
    (-2.3,-1.5)
  };

\draw[line width=1.1pt]
  plot[smooth cycle, tension=0.8] coordinates{
    (-2.4,-0.2)
    (-1.8,1.7)
    (-0.5,2.4)
    (1.3,2.1)
    (2.6,0.8)
    (2.1,-1.4)
    (0.6,-2.3)
    (-0.9,-2.4)
    (-2.3,-1.5)
  };

\draw[gray!70, line width=1.0pt] (-2.534,-0.029) -- (-1.166,3.729);
\draw[gray!70, line width=1.0pt] ( 1.466,-3.429) -- ( 2.834,0.329);

\draw[line width=1.0pt,<->] (-0.860,1.358) -- (1.160,-1.058);

\draw[line width=0.8pt] (-0.946,1.123) -- (-0.775,1.593);
\draw[line width=0.8pt] ( 1.075,-1.293) -- ( 1.246,-0.823);

\end{tikzpicture}
\caption{Width of a convex body $K \in \mathcal{K}^2$ in a fixed direction $u \in S^1$. This width is exactly $h_K(u) + h_K(-u)$.}
\label{figure:mean-width}
\end{figure}
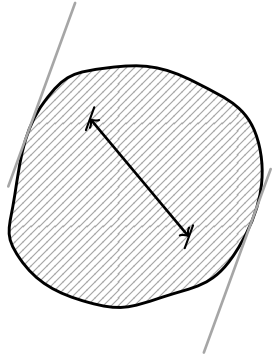
Of course, we note (\ref{mean-width}) shows $W$ is just the intrinsic volume $V_1$ up to a dimension-dependent constant. 

Theorem 5 in \cite{florian} establishes that $(\mathcal{K}^n, \rho_W)$ (and consequently $(\mathcal{K}^n, \rho_{V_1}))$ is a length space that admits a shortest path between any two given points. We now recover this for $(\mathcal{K}^n, \rho_\phi)$.

\begin{theorem}[Restatement of Theorem \ref{thm:3}]
    Let $\phi$ be $k$-strictly monotone. If $\phi$ is $1$-homogeneous, then $(\mathcal{K}^n, \rho_\phi)$ is a length space that admits a shortest path $\gamma : [a,b] \rightarrow \mathcal{K}^n$ between any given $K$ and $L$ in $\mathcal{K}^n$.
    \begin{proof}
        Similar to the derivation given in Theorem \ref{thm:2}, we note Firey's characterization gives us that 
        $$\rho_\phi(K,L) = \int_{S^{n-1}} |h_K(u) - h_L(u)| \, dS(G_1, \dots, G_{n-1}; u) \quad \text{for all } K, L \in \mathcal{K}^n$$
        for some fixed $G_1, \dots, G_{n-1} \in \mathcal{K}^n$. Fix $K, L \in \mathcal{K}^n$ then and consider, similar to earlier, 
        $$K_t := (1-t)K + tL \quad \text{for } t \in [0,1].$$
        Using basic properties of support functions then we note for $s,t \in [0,1]$ that 
        \begin{align*}
            \rho_\phi(K_t, K_s) &= \int_{S^{n-1}} |h_{K_t}(u) - h_{K_s}(u)| \, dS(G_1, \dots, G_{n-1}) \\
            &= \int_{S^{n-1}} |(1-t)h_K(u) + th_L(u) - [(1-s)h_K(u) + sh_L(u)]| \\
            &= |t - s| \int_{S^{n-1}} |h_K(u) - h_L(u)| \, dS(G_1, \dots, G_{n-1}; u) \\
            &= |t - s|\rho_\phi(K,L).
        \end{align*}
        This of course has the path $\gamma : [0,1] \rightarrow \mathcal{K}^n$ by $t \mapsto K_t$ is continuous with respect to $\rho_\phi$. Moreover, considering any partition $\{p_i\}_{i = 0}^N$ of $[0,1]$ we have 
        \begin{align*}
            \sum_{i = 1}^N \rho_\phi(K_{p-1}, K_p) = \sum_{i = 1}^N |p_{i-1} - p_i| \rho_\phi(K,L) = \rho_\phi(K,L).
        \end{align*}
        It follows then $(\mathcal{K}^n, \rho_\phi)$ is a length space that admits shortest paths between any $K$ and $L$ in $\mathcal{K}^n$.
    \end{proof}
\end{theorem}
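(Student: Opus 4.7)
The plan is to leverage Firey's characterization (Proposition \ref{prop:firey}), which since $\phi$ is $1$-homogeneous and monotone gives
$$\phi(K) = \frac{1}{n}\int_{S^{n-1}} h_K(u)\, dS(G_1,\dots,G_{n-1};u)$$
for some fixed $G_1,\dots,G_{n-1} \in \mathcal{K}^n$. Applying the identity $h_{M \tilde\cup N} = \max\{h_M, h_N\}$ and $2\max\{a,b\} - a - b = |a-b|$ pointwise on $S^{n-1}$ lets us rewrite
$$\rho_\phi(K,L) = \frac{1}{n}\int_{S^{n-1}} |h_K(u) - h_L(u)|\, dS(G_1,\dots,G_{n-1};u)$$
for all $K, L \in \mathcal{K}^n$. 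With $\rho_\phi$ in this explicit integral form, the natural candidate for a shortest path between given $K, L \in \mathcal{K}^n$ is the linear Minkowski interpolation $\gamma(t) := (1-t)K + tL$ for $t \in [0,1]$, whose continuity with respect to Hausdorff distance (and hence with respect to $\rho_\phi$, by continuity of $\phi$) is standard.

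The crux of the argument is a direct computation: since support functions are linear under Minkowski combinations, for any $s, t \in [0,1]$ we have $h_{\gamma(t)} - h_{\gamma(s)} = (s-t)(h_K - h_L)$ pointwise, so
\begin{align*}
\rho_\phi(\gamma(s), \gamma(t)) &= \frac{1}{n}\int_{S^{n-1}} \bigl| (s-t)(h_K(u) - h_L(u)) \bigr| \, dS(G_1,\dots,G_{n-1};u) \\
&= |s - t|\, \rho_\phi(K, L).
\end{align*}
Thus $\gamma$ is isometric (up to scaling) to an interval, and the length computation becomes trivial: for any partition $\{p_i\}_{i=0}^N$ of $[0,1]$,
$$\sum_{i=1}^N \rho_\phi(\gamma(p_{i-1}), \gamma(p_i)) = \sum_{i=1}^N (p_i - p_{i-1})\, \rho_\phi(K,L) = \rho_\phi(K,L),$$
so the supremum defining $L(\gamma)$ exists and equals $\rho_\phi(K,L)$.

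To finish, the trivial lower bound $\overline{\rho_\phi}(K,L) \ge \rho_\phi(K,L)$ comes from the single-interval partition in the definition of $L$, while the computation above gives $\overline{\rho_\phi}(K,L) \le L(\gamma) = \rho_\phi(K,L)$. Hence $\rho_\phi = \overline{\rho_\phi}$, proving that $(\mathcal{K}^n, \rho_\phi)$ is a length space, and the interpolation $\gamma$ is itself a shortest path, providing the required geodesic between any prescribed $K, L$. I do not anticipate a genuine obstacle here: the only subtle points are invoking Firey's representation (for which monotonicity of $\phi$ is needed, coming from $k$-strict monotonicity as established in Section 2) and the collapse of the partition sum, both of which are quite clean. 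The argument is essentially the same as Florian's \cite{florian} in the mean-width case, extended by Firey's formula to the full class of $1$-homogeneous monotone valuations in $\textup{Val}$.
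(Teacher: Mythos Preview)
Your proposal is correct and follows essentially the same route as the paper: use Firey's characterization to rewrite $\rho_\phi$ as an $L^1$-type integral of $|h_K - h_L|$, take the linear Minkowski interpolation $\gamma(t) = (1-t)K + tL$, and exploit $h_{\gamma(t)} - h_{\gamma(s)} = (s-t)(h_K - h_L)$ to get $\rho_\phi(\gamma(s),\gamma(t)) = |s-t|\,\rho_\phi(K,L)$, from which the length-space and shortest-path claims follow immediately. Your write-up is in fact slightly more explicit than the paper's in deriving the $L^1$ formula (via $h_{K\tilde\cup L} = \max\{h_K,h_L\}$) and in spelling out both inequalities for $\overline{\rho_\phi} = \rho_\phi$.
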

Of course, this theorem and proof establishes that $(\mathcal{K}^n, \rho_\phi)$ is a \textit{geodesic space}, of which additional geometric questions can be asked. These questions will, however, escape the scope of this paper.

We turn then to the $n$-homogeneous case. To reiterate, in this case $\Delta_\phi$ is merely a positive multiple of the \textit{symmetric difference metric}
$$\Delta_{\textup{Vol}}(K,L) = \textup{Vol}(K) + \textup{Vol}(L) - 2\textup{Vol}(K \cap L) \quad \text{for all } K, L \in \mathcal{K}^n_n,$$
and so we just need to consider the particular case of $\Delta_{\textup{Vol}}$.

Certain geometric and topological properties of this metric are studied in \cite{https://doi.org/10.1112/S0025579300005179}. Therein it is noted that the space $(\mathcal{K}^n_n, \Delta_\textup{Vol})$ is \textit{convex}, i.e. there is $M \in \mathcal{K}^n_n$ where
$$\Delta_\textup{Vol}(K,L) = \Delta_\textup{Vol}(K,M) + \Delta_\textup{Vol}(M, L)$$
for any given $K, L \in \mathcal{K}^n_n$, $K, L$ and $M$ distinct. However, as $(\mathcal{K}^n_n, \Delta_\textup{Vol})$ is clearly \textit{not} complete as a metric space, convexity alone does not imply intrinsicness (see Figure \ref{fig:metric-convex-not-intrinsic}).

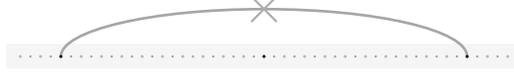
\begin{figure}[H]
\centering
\begin{tikzpicture}[scale=0.9,line cap=round,line join=round]

\def\L{3.8}
\def\h{0.18}
\def\y{0}
\def\R{3.0}

\coordinate (A) at (-\R,\y);
\coordinate (B) at (\R,\y);

\fill[gray!8] (-\L,\y-\h) rectangle (\L,\y+\h);

\foreach \x in {-3.6,-3.3,-3.0,-2.7,-2.4,-2.1,-1.8,-1.5,-1.2,-0.9,-0.6,-0.3,
                 0.0,0.3,0.6,0.9,1.2,1.5,1.8,2.1,2.4,2.7,3.0,3.3,3.6}
  {\fill[gray!55] (\x,\y) circle (0.022);}

\foreach \x in {-3.45,-3.15,-2.85,-2.55,-2.25,-1.95,-1.65,-1.35,-1.05,-0.75,-0.45,
                -0.15,0.15,0.45,0.75,1.05,1.35,1.65,1.95,2.25,2.55,2.85,3.15,3.45}
  {\fill[gray!70] (\x,\y) circle (0.018);}

\draw[gray!70,line width=1.0pt]
  (A) arc[start angle=180,end angle=0,x radius=\R,y radius=0.7];

\fill[black] (A) circle (0.022);
\fill[black] (B) circle (0.022);
\fill[black] (0,\y) circle (0.022);

\draw[gray!65,line width=0.8pt] (-0.18,0.52) -- (0.18,0.88);
\draw[gray!65,line width=0.8pt] (-0.18,0.88) -- (0.18,0.52);

\end{tikzpicture}
\caption{The rational numbers $\mathbb{Q}$, equipped with the usual Euclidean metric, form a convex metric space that is not intrinsic: between any two distinct points there is no continuous path lying entirely in $\mathbb{Q}$.}
\label{fig:metric-convex-not-intrinsic}
\end{figure}
Nevertheless, we note intrinsicness of $\Delta_\textup{Vol}$ in (and \textit{only} in) $n \geq 2$ follows immediately as a corollary of the proof of Theorem \ref{thm:1}, as in that setting $\textup{Vol}$ lacks a $1$-homogeneous part. Indeed we get 
$$\Delta(K,L)_\textup{Vol} \leq \ol \Delta_\textup{Vol}(K,L) \leq \Delta_\textup{Vol}(K,L) \quad \text{for all } K, L \in \mathcal{K}^n_n,$$
as $\ol \Delta_\textup{Vol}$ is just the normal intrinsic \textit{metric} induced by $\Delta_{\textup{Vol}}$: every piecewise metric continuous curve pastes together to give a continuous curve where the lengths add, and every nominally continuous curve is piecewise metric continuous. 

That being said, in the proof of Theorem \ref{thm:1} we only showed the existence of a shortest path $\gamma$, i.e. a path joining $K, L \in \mathcal{K}^n_n$ with 
$$L(\gamma) = \Delta_\textup{Vol}(K,L),$$
when $K \cap L \in \mathcal{K}^n_n$. It is natural to ask if this is the only case in which we can find a shortest path, a question we answer now: 

\begin{theorem}[Restatement of Theorem \ref{thm:4}]
    Let $K, L \in \mathcal{K}^n_n$ for any $n$. Then there is continuous $\gamma : [0,1] \rightarrow \mathcal{K}^n_n$ such that 
    \begin{align*}
    \gamma(0) = K, \gamma(1) = L 
    \quad \text{and} \quad 
    L(\gamma) = \Delta_{\textup{Vol}}(K,L)
    \end{align*}
    if and only if there exists a ``bridging body'' $M \in \mathcal{K}^n_n$ such that 
    \begin{align*}
        K \cap L \subseteq M \subseteq K \cup L \quad (\textup{a.e.}) 
        \quad \text{and} \quad 
        M \cap K, \quad M \cap L \in \mathcal{K}^n_n.
    \end{align*}
    In particular, there is never a geodesic segment joining disjoint bodies $K,L \in \mathcal{K}^n$, and thus $(\mathcal{K}^n, \Delta)$ is never a geodesic space for any $n$.
\end{theorem}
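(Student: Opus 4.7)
The plan is to work with the identification $\Delta_{\textup{Vol}}(K,L) = \|\mathbf{1}_K - \mathbf{1}_L\|_{L^1(\mathbb{R}^n)}$ and exploit the standard sharpness condition for equality in the $L^1$ triangle inequality: $\|f\|_1 + \|g\|_1 = \|f+g\|_1$ precisely when $f$ and $g$ have the same sign pointwise a.e. With this identification, the forward direction reduces to a selection problem along the path, and the backward direction reduces to an explicit concatenation of monotone Minkowski interpolations.

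\textbf{Forward direction.} Given a continuous $\gamma:[0,1]\to \mathcal{K}^n_n$ with $L(\gamma)=\Delta_{\textup{Vol}}(K,L)$, for every $t\in(0,1)$ the partition $\{0,t,1\}$ and the triangle inequality for the metric $\Delta_{\textup{Vol}}$ force
\[
\Delta_{\textup{Vol}}(K,\gamma(t)) + \Delta_{\textup{Vol}}(\gamma(t),L) = \Delta_{\textup{Vol}}(K,L).
\]
Writing this as $\|\mathbf{1}_K - \mathbf{1}_{\gamma(t)}\|_1 + \|\mathbf{1}_{\gamma(t)} - \mathbf{1}_L\|_1 = \|\mathbf{1}_K - \mathbf{1}_L\|_1$ and applying the $L^1$ sharpness condition pointwise, a short case check on the values of $\mathbf{1}_K - \mathbf{1}_{\gamma(t)}$ and $\mathbf{1}_{\gamma(t)} - \mathbf{1}_L$ yields $K\cap L \subseteq \gamma(t) \subseteq K\cup L$ almost everywhere. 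It then remains to choose $t$ so that $\gamma(t)\cap K$ and $\gamma(t)\cap L$ both lie in $\mathcal{K}^n_n$. I would set $f(t):=\textup{Vol}(\gamma(t)\cap K)$ and $g(t):=\textup{Vol}(\gamma(t)\cap L)$; both are continuous since $|f(s)-f(t)|\leq \Delta_{\textup{Vol}}(\gamma(s),\gamma(t))$ and similarly for $g$. If $f(t)=0$, the a.e. inclusion $\gamma(t)\subseteq K\cup L$, together with the full-dimensionality of $\gamma(t)\in\mathcal{K}^n_n$ (so $\gamma(t)^\circ$ is dense in $\gamma(t)$) and the closedness of $L$, upgrades to the set-theoretic inclusion $\gamma(t)\subseteq L$, whence $g(t)=\textup{Vol}(\gamma(t))>0$; symmetrically for $g$. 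Hence $\{f>0\}$ and $\{g>0\}$ are open subsets of $[0,1]$ that cover it, each nonempty (as they contain $0$ and $1$ respectively); by connectedness of $[0,1]$ their intersection is nonempty, and $M:=\gamma(t^\ast)$ for any $t^\ast$ in this intersection satisfies the requirements.

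\textbf{Backward direction and disjoint bodies.} Given such a bridging $M$, I would realize a shortest path by concatenating four monotone Minkowski interpolations
\[
K \longrightarrow K\cap M \longrightarrow M \longrightarrow M\cap L \longrightarrow L,
\]
which stays in $\mathcal{K}^n_n$ because $K\cap M,\,M\cap L\in\mathcal{K}^n_n$ and each segment moves through nested bodies. Lemma~\ref{lem:piecewise-minkowski-is-piecewise-metric} gives that each segment has length equal to the $|\textup{Vol}(\cdot)-\textup{Vol}(\cdot)|$ of its endpoints, so the total length of $\gamma$ is $\textup{Vol}(K)+\textup{Vol}(L)+2\textup{Vol}(M)-2\textup{Vol}(K\cap M)-2\textup{Vol}(M\cap L)$. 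The bridging hypothesis $K\cap L\subseteq M\subseteq K\cup L$ (a.e.) yields via inclusion-exclusion applied to $M\cap K$ and $M\cap L$ inside $M$ the identity $\textup{Vol}(K\cap M)+\textup{Vol}(M\cap L) = \textup{Vol}(M)+\textup{Vol}(K\cap L)$, which collapses the total length to $\Delta_{\textup{Vol}}(K,L)$. Continuity of $\gamma$ with respect to $\Delta_{\textup{Vol}}$ follows because each Minkowski segment is $d_\mathcal{H}$-Lipschitz and $\textup{Vol}$ is $d_\mathcal{H}$-continuous. For the final clause, if $K\cap L=\varnothing$ with $K,L\in\mathcal{K}^n_n$, strict separation of compact convex sets produces a hyperplane with a positive-width open slab $S$ disjoint from $K\cup L$; any hypothetical bridging $M$ would have $\textup{int}(M)$ meeting both open halfspaces (since $M\cap K,\,M\cap L\in\mathcal{K}^n_n$), so by convexity $\textup{int}(M)\cap S$ would be a nonempty open subset of $\mathbb{R}^n$ of positive $n$-volume, contradicting $M\subseteq K\cup L$ a.e.

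\textbf{Main obstacle.} The conceptually delicate step is the upgrade from the measure-theoretic statement $f(t)=0$ to the genuine set-theoretic inclusion $\gamma(t)\subseteq L$; this is what makes the connectedness argument go through and rules out the pathological scenario in which $f$ and $g$ vanish simultaneously. Without exploiting that $\gamma(t)$ is full-dimensional and that $L$ is closed, the two open sets in the forward-direction selection step could in principle fail to cover $[0,1]$, and the whole strategy would collapse. This measure-theoretic-to-geometric bridge is the pivot of the proof.
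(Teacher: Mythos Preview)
Your proof is correct, and the backward direction together with the disjoint-bodies clause match the paper's approach essentially verbatim (the paper also uses the interpolation $K\to K\cap M\to M\to M\cap L\to L$ and the same inclusion-exclusion identity, though it leaves the separating-slab argument for disjoint $K,L$ implicit).

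The forward direction, however, is handled differently. The paper defines $w(t)=\textup{Vol}(\gamma(t)\cap(K\setminus L))$ and $z(t)=\textup{Vol}(\gamma(t)\cap(L\setminus K))$, proves that $w$ is nonincreasing and $z$ nondecreasing by applying the equality characterization a \emph{second} time (to $K$, $\gamma(s)$, $\gamma(t)$), and then runs a sup/inf argument to reach a contradiction with $\textup{Vol}(\gamma(a))>0$. Your route---show that $\{f>0\}$ and $\{g>0\}$ are open, nonempty, and cover $[0,1]$, then invoke connectedness---is cleaner: it avoids the monotonicity step entirely and treats all cases uniformly without singling out $\textup{Vol}(K\cap L)>0$. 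The price you pay is the ``upgrade'' lemma you flag (from $\textup{Vol}(\gamma(t)\setminus L)=0$ to the set-theoretic inclusion $\gamma(t)\subseteq L$), which the paper never needs because its contradiction is purely measure-theoretic. Both arguments are short; yours is more topological, the paper's more order-theoretic.
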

Before proving this theorem, we note some useful lemmas.

\begin{lemma}\label{lem:equality-char}
    We have that for $K, L, M \in \mathcal{K}^n_n$ that 
    $$\Delta_\textup{Vol}(K,L) = \Delta_\textup{Vol}(K, M) + \Delta_\textup{Vol}(M,L)$$
    if and only if $K \cap L \subseteq M$ and $M \subseteq K \cup L$ up to (Lebesgue-)null sets. 
    \begin{proof}
        Note 
        \begin{align*}\Delta_\textup{Vol}(K,M) + \Delta_\textup{Vol}(M,L)  - \Delta_{\textup{Vol}}(K,L) = 2[\textup{Vol}(M) + \textup{Vol}(K \cap L) - \textup{Vol}(K \cap M) - \textup{Vol}(M \cap L)].\end{align*}
        Applying the valuation property to the right-side of above equality shows it is just
        $$2[\textup{Vol}(M \cup (K \cap L)) - \textup{Vol}( M \cap (K \cup L))],$$
        which is zero if and only if 
        $$M \cap (K \cup L) = M\cup (K \cap L)$$
        up to a null set, which clearly forces 
        $$K \cap L \subseteq M \subseteq K \cup L$$
        up to null sets. 
    \end{proof}
\end{lemma}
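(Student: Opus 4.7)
The plan is to prove both directions at once by computing the deficit $\Delta_\textup{Vol}(K,M) + \Delta_\textup{Vol}(M,L) - \Delta_\textup{Vol}(K,L)$ in closed form and showing it vanishes precisely when the stated inclusions hold up to null sets. Expanding each meet-deviation from Definition \ref{phi-div}, the terms $\textup{Vol}(K)$ and $\textup{Vol}(L)$ cancel between the two sides and what remains is
\[
  2\bigl[\textup{Vol}(M) + \textup{Vol}(K \cap L) - \textup{Vol}(K \cap M) - \textup{Vol}(M \cap L)\bigr].
\]

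Next I would apply inclusion–exclusion for Lebesgue measure (which holds for any measurable sets, hence in particular for intersections of elements of $\mathcal{K}^n_n$) to the pair $K \cap M$, $M \cap L$: their union is $M \cap (K \cup L)$ and their intersection is $K \cap L \cap M$. A second application of inclusion–exclusion, this time to the pair $M$ and $K \cap L$, consolidates the rest of the expression, and after one checks the bookkeeping the whole quantity collapses to
\[
  2\bigl[\textup{Vol}\bigl(M \cup (K \cap L)\bigr) - \textup{Vol}\bigl(M \cap (K \cup L)\bigr)\bigr].
\]

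The key set-theoretic observation is now immediate: any $x \in M \cap (K \cup L)$ lies in $M$, so $M \cap (K \cup L) \subseteq M \cup (K \cap L)$ always. Hence the displayed quantity is nonnegative and vanishes if and only if $M \cup (K \cap L) = M \cap (K \cup L)$ up to a null set. From this equality both inclusions fall out at once: $K \cap L \subseteq M \cup (K \cap L) = M \cap (K \cup L) \subseteq M$ gives $K \cap L \subseteq M$ (a.e.), and $M \subseteq M \cup (K \cap L) = M \cap (K \cup L) \subseteq K \cup L$ gives $M \subseteq K \cup L$ (a.e.). Conversely, if $K \cap L \subseteq M \subseteq K \cup L$ up to null sets, then $M \cup (K \cap L) = M = M \cap (K \cup L)$ a.e., so the deficit vanishes. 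I do not anticipate a real obstacle; the argument is pure inclusion–exclusion, and the only point deserving a moment of care is that the identity $\textup{Vol}(A \cup B) + \textup{Vol}(A \cap B) = \textup{Vol}(A) + \textup{Vol}(B)$ is being invoked for sets that are not themselves convex but are finite intersections of convex bodies, which is justified either by the continuous-valuation extension to $U(\mathcal{K}^n)$ noted in Section 2 or directly by the measurability of these sets.
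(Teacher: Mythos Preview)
Your proposal is correct and follows essentially the same approach as the paper's proof: compute the deficit, reduce it via inclusion--exclusion (what the paper calls ``the valuation property'') to $2[\textup{Vol}(M\cup(K\cap L)) - \textup{Vol}(M\cap(K\cup L))]$, and then read off the equivalence with the stated inclusions. You supply slightly more detail on the nonnegativity and the two directions of the final equivalence than the paper does, but the argument is the same.
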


We also have the estimate we demonstrated earlier in the proof of Lemma \ref{lem:intersections}, which we write down separately here.

\begin{lemma}\label{lem:estimate}
    We have 
    $$|\textup{Vol}(K) - \textup{Vol}(L)| \leq \Delta_\textup{Vol}(K,L)$$
    for all (Lebesgue-)measurable $K, L \subseteq \mathbb{R}^n$ with $\textup{Vol}(K), \textup{Vol}(L) < \infty.$
\end{lemma}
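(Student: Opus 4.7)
The plan is to mirror the short calculation already sketched in the proof of Lemma \ref{lem:intersections}, now carried out in the purely measure-theoretic setting with Lebesgue volume in place of a general strictly monotone valuation $\phi$. The statement is symmetric in $K$ and $L$, so without loss of generality I would assume $\textup{Vol}(K) \geq \textup{Vol}(L)$, which reduces the problem to proving
$$
\textup{Vol}(K) - \textup{Vol}(L) \;\leq\; \textup{Vol}(K) + \textup{Vol}(L) - 2\textup{Vol}(K \cap L).
$$

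The key algebraic step is to rewrite the right-hand side as
$$
\Delta_\textup{Vol}(K,L) \;=\; [\textup{Vol}(K) - \textup{Vol}(L)] \;+\; 2\,[\textup{Vol}(L) - \textup{Vol}(K \cap L)],
$$
so the inequality reduces to $\textup{Vol}(L) - \textup{Vol}(K \cap L) \geq 0$. This last bound is immediate from monotonicity of Lebesgue measure applied to the inclusion $K \cap L \subseteq L$ (the assumption $\textup{Vol}(L) < \infty$ ensures that both quantities are finite, so the subtraction is well-defined). Unpacking the absolute value then yields the claimed inequality.

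There is essentially no obstacle here: the argument is just a rearrangement of the definition of $\Delta_\textup{Vol}$ followed by an appeal to measure-monotonicity, and it does not require convexity of $K$ or $L$ (which is why the lemma is stated for general measurable sets with finite volume). The only minor point worth flagging is that one should not try to invoke a valuation-theoretic monotonicity argument as in the main body of the paper, since $K, L$ are not assumed convex; the straight measure-theoretic fact $A \subseteq B \Rightarrow \textup{Vol}(A) \leq \textup{Vol}(B)$ is all that is needed.
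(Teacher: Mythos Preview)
Your proposal is correct and follows exactly the same approach as the paper: the paper does not prove this lemma separately but simply points back to the computation inside the proof of Lemma~\ref{lem:intersections}, which is precisely the WLOG-plus-rearrangement argument you reproduce, now with Lebesgue monotonicity in place of valuation monotonicity. Your remark that one should appeal to measure-theoretic monotonicity (rather than a convex-body argument) is exactly the adaptation needed for the measurable-set generality of the statement.
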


We are now in a position to prove the theorem. 

\begin{proof}[Proof of Theorem \ref{thm:4}]
    Let $K, L \in \mathcal{K}^n_n$. We prove the backwards direction first, i.e. we assume we have $M \in \mathcal{K}^n_n$ with 
    $$K \cap L \subseteq M \subseteq K \cup L \quad \text{(a.e.)} \quad \text{and} \quad M\cap K, \quad M \cap L \in \mathcal{K}^n_n,$$
    and move to show there is a shortest path joining $K$ and $L$. For this consider the path $\gamma$ given by the interpolation
    $$K \longrightarrow M \cap K \longrightarrow M \longrightarrow M \cap L \longrightarrow L.$$
    By additivity of length in a metric space and the principle of Lemma \ref{lem:piecewise-minkowski-is-piecewise-metric} we see this $\gamma$ has length 
    $$\Delta_\textup{Vol}(K,M) + \Delta_\textup{Vol}(M,L) = \Delta_\textup{Vol}(K, L),$$
    as desired.

    Assume then for the forwards direction that we have some continuous path $\gamma : [0,1] \rightarrow \mathcal{K}^n_n$ such that 
    $$L(\gamma) = \Delta_\textup{Vol}(K,L).$$
    Of course, note that in this position we have for each $t \in [0,1]$ that \begin{equation}\label{tr-equality}\Delta_\textup{Vol}(K,L) \leq \Delta_\textup{Vol}(K, \gamma(t)) + \Delta_\textup{Vol}(\gamma(t), L) \leq L(\gamma) = \Delta_\textup{Vol}(K,L),\end{equation}
    so Lemma \ref{lem:equality-char} forces 
    $$K \cap L \subseteq \gamma(t) \subseteq K \cup L \quad \text{(a.e.)} \quad \text{for all } t \in [0,1].$$
    It suffices then to show there is some $t^\ast \in [0,1]$ such that 
    $$\textup{Vol}(K \cap \gamma(t^\ast)) \neq 0 \quad \text{and} \quad \textup{Vol}(L \cap \gamma(t^\ast)) \neq 0,$$
    as then we just take $M = \gamma(t^\ast)$.
    Define then the functions $w, z : [0,1] \rightarrow \mathbb{R}$ via 
    \begin{align*}
        w(t) &:= \textup{Vol}(\gamma(t) \cap (K \setminus L)) \quad \text{and} \\
        z(t) &:= \textup{Vol}(\gamma(t) \cap (L \setminus K)).
    \end{align*}
    We will assume from this moment forward that $\textup{Vol}(K \setminus L), \textup{Vol}(L \setminus K) >0$; if say $\textup{Vol}(K \setminus L) = 0$, then $K \subseteq L$ up to a null set and so the choice $M = K$ would suffice. 

    We first note then that the functions $w$ and $z$ are continuous; indeed, note for say some measurable $F \subseteq \mathbb{R}^n$ with $\textup{Vol}(F) < \infty$ we get
    \begin{align*}
    |\textup{Vol}(\gamma(t) \cap F) - \textup{Vol}(\gamma(s) \cap F)| &\leq \Delta_\textup{Vol}(\gamma(t) \cap F, \gamma(s) \cap F) \\
    &= \textup{Vol}((\gamma(t) \Delta \gamma(s)) \cap F) \\
    &\leq \Delta_\textup{Vol}(\gamma(t), \gamma(s))
    \end{align*}
    for each $t,s \in [0,1]$ by Lemma \ref{lem:estimate}. The choices $F = K \setminus L, L \setminus K$ show $w$ and $z$ to be continuous then, given the continuity of $\gamma$.

    We claim then in addition that $w$ is nonincreasing (resp. $z$ is nondecreasing). To show this, we fix $t \in [0,1]$ and consider $s \in [0,t]$. Similar to (\ref{tr-equality}) we have 
    $$\Delta_\textup{Vol}(K, \gamma(t)) = \Delta_\textup{Vol}(K, \gamma(s)) + \Delta_\textup{Vol}(\gamma(s), \gamma(t)),$$
    and thus 
    $$K \cap \gamma(t) \subseteq \gamma(s) \subseteq K \cup \gamma(t) \quad \text{(a.e.)}.$$
    Intersecting each of these sets with $K \setminus L$ then gives the inclusions
    $$(K \setminus L) \cap \gamma(t) \subseteq (K \setminus L) \cap \gamma(s) \subseteq K \setminus L$$
    up to null sets, where taking volumes and leveraging monotonicity gets exactly $w(t) \leq w(s)$. The desired result for $z$ follows similarly. 

    To recap then, at this point we have continuous, nonincreasing $w$ and continuous, nondecreasing $z$ with:
    \begin{align*}
        w(0) &= \textup{Vol}(K \setminus L) >0, &w(1) &= 0,  \\
        z(0) &= 0, &z(1) &= \textup{Vol}(L \setminus K) > 0.
    \end{align*}
    To proceed then, note that if $\textup{Vol}(K \cap L) > 0$, then the choice $M = K$ suffices for our purposes. We take $\textup{Vol}(K \cap L) = 0$ then, and assume for the sake of contradiction that no $t \in [0,1]$ has 
    $$\textup{Vol}(\gamma(t) \cap K) > 0 \quad \text{and} \quad \textup{Vol}(\gamma(t) \cap L) > 0.$$
    Define then 
    $$a := \sup\{t \in [0,1] : w(t) > 0\} \quad \text{and} \quad b := \inf\{t \in [0,1] : z(t) > 0\};$$
    where both of the involved sets here are nonempty given the assumption $\textup{Vol}(K \setminus L), \textup{Vol}(L \setminus K) > 0$. We claim then $a \leq b$.

    Indeed, if $a > b$, then for any $\epsilon < \frac{a - b}{2}$ there exist $c_1, c_2 \in [0,1]$ satisfying 
    $$c_1 > a - \epsilon, w(c_1) > 0 \quad \text{and} \quad c_2 < b + \epsilon, z(c_2) > 0.$$
    At the point $c := \frac{a + b}{2}$ then we would get
    $$0 < w(c_1) \leq w(c) \quad \text{and} \quad 0 < z(c_2) \leq z(c)$$
    as $w$ is nonincreasing and $z$ is nondecreasing. This is a contradiction though as 
    \begin{align*}
        &0< w(c) \leq \textup{Vol}(\gamma(c) \cap K), \quad \text{and} \\
        &0 < z(c) \leq \textup{Vol}(\gamma(c) \cap L)
    \end{align*}
    by monotonicity. 
    
    We have $a \leq b$ then. As $w(1) = z(0) = 0$ and $w$ and $z$ are continuous, we get $w(a) = z(b) = 0$. Furthermore, of course $w(t) = 0$ for $t \geq a$ and $z(s) = 0$ for $s \leq b$. It follows then 
    $$w(a) = z(a) = 0,$$
    which we can use to force $\gamma(a) \subseteq K \cap L.$ Indeed, write for shorthand
    $$S : = K \cap L = (K \cup L) \setminus (K \Delta L) = (K \cup L) \setminus ((K \setminus L) \cup (L \setminus K)).$$
    We get then that 
    \begin{equation}\label{containment-of-difference}\gamma(a) \setminus S = (\gamma(a) \cap (K \cup L)^c) \cup (\gamma(a) \cap (K \setminus L)) \cup (\gamma(a) \cap (L \setminus K)).\end{equation}
    As $\gamma(a) \subseteq K \cup L$ up to a null set then and $w(a) = z(a) = 0$, using subadditivity has that the volume of the right side of (\ref{containment-of-difference}) is zero. Monotonicity then has $\textup{Vol}(\gamma(a) \setminus S) = 0$, so we have
    $$\gamma(a) \subseteq S \quad \text{(a.e.)}.$$
    However, $\gamma(a) \in \mathcal{K}^n_n$, i.e. $\textup{Vol}(\gamma(a)) > 0$, so the above would then force $\textup{Vol}(K \cap L) >0$, a contradiction. It follows there must be some $t^\ast \in [0,1]$ with 
    $$\textup{Vol}(\gamma(t^\ast) \cap K) > 0 \quad \text{and} \quad \textup{Vol}(\gamma(t^\ast) \cap L) > 0,$$
    and so the desired result holds with $M = \gamma(t^\ast)$.
\end{proof}

\printbibliography

@book{schneider2014,
    title = "Convex Bodies: The Brunn-Minkowski Theory",
    author = "Rolf Schneider",
    publisher = "Cambridge University Press",
    year = "2014"
}

@article{us,
      title={The Hausdorff distance and metrics on toric singularity types}, 
      author={Ayo Aitokhuehi and Benjamin Braiman and David Owen Horace Cutler and Tamás Darvas and Robert Deaton and Prakhar Gupta and Jude Horsley and Vasanth Pidaparthy and Jen Tang},
      year={2025},
      journal={Bulletin des Sciences Mathématiques},
      volume={204}
}

@article{goodeyweil,
    author = {Goodey, Paul and Weil, Wolfgang},
    title = {Distributions and Valuations},
    journal = {Proceedings of the London Mathematical Society},
    volume = {s3-49},
    number = {3},
    pages = {504-516},
    year = {1984},
    month = {11},
    abstract = {This paper is concerned with continuous multilinear functions defined on the n-fold cartesian product Kn, where K denotes the space of non-empty compact convex subsets of Ed. We show that these functions correspond to distributions on the product Ωn, where Ω is the unit (d − 1)-sphere. This correspondence is then used to characterize the continuous translation-invariant valuations on K which are homogeneous of degree 1.},
    issn = {0024-6115},
    doi = {10.1112/plms/s3-49.3.504},
    url = {https://doi.org/10.1112/plms/s3-49.3.504},
    eprint = {https://academic.oup.com/plms/article-pdf/s3-49/3/504/4474160/s3-49-3-504.pdf},
}

@article{groemer,
author = {Groemer, H.},
journal = {Beiträge zur Algebra und Geometrie},
keywords = {metrics for compact convex sets; Hausdorff metric; symmetric difference metric; symmetric surface area deviation; convex bodies; inequalities},
language = {eng},
number = {1},
pages = {107-114},
publisher = {Springer},
title = {On the symmetric difference metric for convex bodies.},
url = {http://eudml.org/doc/225705},
volume = {41},
year = {2000},
}

@article{florian,
  title={On a metric for the class of compact convex sets},
  author={A. Florian},
  journal={Geometriae Dedicata},
  year={1989},
  volume={30},
  pages={69-80},
  url={https://api.semanticscholar.org/CorpusID:119651984}
}

@book{alesker2014,
  title={Integral geometry and valuations},
  author={Alesker, Semyon and Fu, Joseph HG and Gallego, Eduardo and Solanes, Gil},
  year={2014},
  publisher={Springer}
}

@article{firey1976functional,
  author    = {William J. Firey},
  title     = {A functional characterization of certain mixed volumes},
  journal   = {Israel Journal of Mathematics},
  year      = {1976},
  volume    = {24},
  pages     = {274--281},
  month     = dec,
  doi       = {10.1007/BF02834758},
  url       = {https://doi.org/10.1007/BF02834758}
}

@article{bernig2010hermitianintegralgeometry,
      title={Hermitian integral geometry}, 
      author={Andreas Bernig and Joseph H. G. Fu},
      year={2010},
      journal={Annals of Mathematics},
      eprint={0801.0711},
      archivePrefix={arXiv},
      primaryClass={math.DG},
      url={https://arxiv.org/abs/0801.0711}, 
}

@article{HUG2024110622,
title = {The support of mixed area measures involving a new class of convex bodies},
journal = {Journal of Functional Analysis},
volume = {287},
number = {11},
pages = {110622},
year = {2024},
issn = {0022-1236},
doi = {https://doi.org/10.1016/j.jfa.2024.110622},
url = {https://www.sciencedirect.com/science/article/pii/S0022123624003100},
author = {Daniel Hug and Paul A. Reichert},
keywords = {Polytope, Polyoid, Alexandrov–Fenchel inequality, Mixed area measure},
abstract = {Mixed volumes in n-dimensional Euclidean space are functionals of n-tuples of convex bodies K,L,C1,…,Cn−2. The Alexandrov–Fenchel inequalities are fundamental inequalities between mixed volumes of convex bodies. As very special cases they cover or imply many important inequalities between basic geometric functionals. A complete characterization of the equality cases in the Alexandrov–Fenchel inequality remains a challenging open problem. Major recent progress was made by Yair Shenfeld and Ramon van Handel [13], [14], in particular they resolved the problem in the cases where C1,…,Cn−2 are polytopes, zonoids or smooth bodies (under some dimensional restriction). In [6] we introduced the class of polyoids, which are defined as limits of finite Minkowski sums of polytopes having a bounded number vertices. Polyoids encompass polytopes, zonoids and triangle bodies, and they can be characterized by means of generating measures. Based on this characterization and Shenfeld and van Handel's contribution (and under a dimensional restriction), we extended their result to polyoids (or smooth bodies). Our previous result was stated in terms of the support of the mixed area measure associated with the unit ball Bn and C1,…,Cn−2. This characterization result is completed in the present work which more generally provides a geometric description of the support of the mixed area measure of an arbitrary (n−1)-tuple of polyoids (or smooth bodies). The result thus (partially) confirms a long-standing conjecture by Rolf Schneider in the case of polyoids, and hence in particular it covers the case of zonoids and triangle bodies.}
}

@article{article,
author = {Chrz\c{a}szcz, Katarzyna and Jachymski, Jacek and Turoboś, Filip},
year = {2018},
month = {07},
pages = {87-105},
title = {On characterizations and topology of regular semimetric spaces},
volume = {93},
journal = {Publicationes Mathematicae Debrecen},
doi = {10.5486/PMD.2018.8049}
}

@article{xia2009geodesicproblemquasimetricspaces,
      title={The geodesic problem in quasimetric spaces}, 
      author={Qinglan Xia},
      year={2009},
      eprint={0807.3377},
      journal={The Journal of Geometric Analysis},
      archivePrefix={arXiv},
      primaryClass={math.MG},
      url={https://arxiv.org/abs/0807.3377}, 
}

@article{Besau_2019,
   title={Intrinsic and Dual Volume Deviations of Convex Bodies and Polytopes},
   volume={2021},
   ISSN={1687-0247},
   url={http://dx.doi.org/10.1093/imrn/rnz277},
   DOI={10.1093/imrn/rnz277},
   number={22},
   journal={International Mathematics Research Notices},
   publisher={Oxford University Press (OUP)},
   author={Besau, Florian and Hoehner, Steven and Kur, Gil},
   year={2019},
   month=dec, pages={17456–17513} }

@article{https://doi.org/10.1002/mana.19951730106,
author = {Buttazzo, Giuseppe and Ferone, Vincenzo and Kawohl, Bernhard},
title = {Minimum Problems over Sets of Concave Functions and Related Questions},
journal = {Mathematische Nachrichten},
volume = {173},
number = {1},
pages = {71-89},
doi = {https://doi.org/10.1002/mana.19951730106},
url = {https://onlinelibrary.wiley.com/doi/abs/10.1002/mana.19951730106},
eprint = {https://onlinelibrary.wiley.com/doi/pdf/10.1002/mana.19951730106},
year = {1995}
}

@article{https://doi.org/10.1112/S0025579300014625,
author = {Klain, Daniel A.},
title = {A short proof of Hadwiger's characterization theorem},
journal = {Mathematika},
volume = {42},
number = {2},
pages = {329-339},
keywords = {52A39, CONVEX AND DISCRETE GEOMETRY, General Convexity, Mixed volumes and related topics},
doi = {https://doi.org/10.1112/S0025579300014625},
url = {https://londmathsoc.onlinelibrary.wiley.com/doi/abs/10.1112/S0025579300014625},
eprint = {https://londmathsoc.onlinelibrary.wiley.com/doi/pdf/10.1112/S0025579300014625},
abstract = {Abstract One of the most beautiful and important results in geometric convexity is Hadwiger's characterization theorem for the quermassintegrals. Hadwiger's theorem classifies all continuous rigid motion invariant valuations on convex bodies as consisting of the linear span of the quermassintegrals (or, equivalently, of the intrinsic volumes) [4]. Hadwiger's characterization leads to effortless proofs of numerous results in integral geometry, including various kinematic formulas [7, 9] and the mean projection formulas for convex bodies [10]. Hadwiger's result also provides a connection between rigid motion invariant set functions and symmetric polynomials [1, 7].},
year = {1995}
}

@misc{vanhandel2025minkowskismonotonicityproblem,
      title={On Minkowski's monotonicity problem}, 
      author={Ramon van Handel and Shouda Wang},
      year={2025},
      eprint={2507.20082},
      archivePrefix={arXiv},
      primaryClass={math.MG},
      url={https://arxiv.org/abs/2507.20082}, 
}

@misc{zou2016unifiedtreatmentlpbrunnminkowski,
      title={A unified treatment for Lp Brunn-Minkowski type inequalities}, 
      author={Du Zou and Ge Xiong},
      year={2016},
      eprint={1607.07141},
      archivePrefix={arXiv},
      primaryClass={math.MG},
      url={https://arxiv.org/abs/1607.07141}, 
}

@article{https://doi.org/10.1112/S0025579300005179,
author = {Shephard, G. C. and Webster, R. J.},
title = {Metrics for sets of convex bodies},
journal = {Mathematika},
volume = {12},
number = {1},
pages = {73-88},
doi = {https://doi.org/10.1112/S0025579300005179},
url = {https://londmathsoc.onlinelibrary.wiley.com/doi/abs/10.1112/S0025579300005179},
eprint = {https://londmathsoc.onlinelibrary.wiley.com/doi/pdf/10.1112/S0025579300005179},
year = {1965}
}

\end{document}